\DeclareMathOperator{\e}{e}
\newcommand{\1}{\mathbbm{1}}
\newcommand{\E}{\mathbb{E}}
\newcommand{\Prb}{\mathbb{P}}
\newcommand{\R}{\mathbb{R}}
\newcommand{\Sph}{\mathbb{S}}
\newcommand{\T}{\mathbb{T}}
\newcommand{\Z}{\mathbb{Z}}
\newcommand{\calA}{\mathcal{A}}
\newcommand{\calC}{\mathcal{C}}
\newcommand{\calD}{\mathcal{D}}
\newcommand{\calE}{\mathcal{E}}
\newcommand{\calF}{\mathcal{F}}
\newcommand{\calL}{\mathcal{L}}
\newcommand{\calM}{\mathcal{M}}
\newcommand{\calN}{\mathcal{N}}
\newcommand{\calS}{\mathcal{S}}
\newcommand{\calW}{\mathcal{W}}
\newtheorem{Thm}{Theorem}
\newtheorem{Prop}{Proposition}
\newtheorem{Lem}{Lemma}
\newtheorem{Def}{Definition}
\title{Vlasov limit for a chain of oscillators with Kac potentials}
\author{Alejandro Fernandez Montero
\thanks{CMAP, Ecole Polytechnique, CNRS, 91128 Palaiseau Cedex-France; E-mail: 
   \href{mailto:alejandro.fernandez-montero@polytechnique.edu}{alejandro.fernandez-montero@polytechnique.edu}
   }
}
\date{}
\begin{document}
\selectlanguage{english}
\maketitle




\begin{abstract}
We consider a chain of anharmonic oscillators with local mean field interaction and long-range stochastic exchanges of velocity. Even if the particles are not exchangeable, we prove the convergence of the empirical measure associated with this chain to a solution of a Vlasov-type equation. We then use this convergence to prove energy diffusion for a restricted class of anharmonic potentials.\\
\end{abstract}


\section{Model and results}

\subsection{Introduction}

The study of chains of interacting oscillators has drawn a lot of attention over the past few years. Deriving Fourier's law from an anharmonic chain is a major open problem in statistical mechanics \cite{Lebowitz}. Mathematically, the model consists in a system of $N$ oscillators, whose displacement and momentum are denoted by $X^i\in\R^d$ and $V^i\in\R^d$ respectively for $1\leq i \leq N$. Particles interact via a Hamiltonian dynamics, with Hamiltonian given by
$$ \mathcal{H} = \sum_{i=1}^N \left(\frac{1}{2} |V^i|^2 + \frac{1}{2} \sum_k\phi_{k}W(X^i - X^{i+k}) + U(X^i)\right), $$
where generally $(\phi_{k})_{k\in\Z}$ is such that $\phi_k=0$ when $|k|> K$ for some integer value $K$, \textit{i.e.} the interaction is only between oscillators with close lattice index. $W$ is a pair potential modelling the interaction between particles and $U$ is a pinning potential. It is known since \cite{Rieder} that for nearest neighbor harmonic interaction (\textit{i.e} $K=1$, $W(x) = x^2$, $U(x) = x^2$) the transport of energy is ballistic and therefore Fourier's law is not valid. The study of the anharmonic chain seems nevertheless out of reach for the moment. However, the model has drawn attention over the past few years with on the one hand the proof of the convergence to the unique invariant measure for anharmonic chains coupled to two heat baths with different temperatures (see \textit{e.g.} \cite{Eckmann} and \cite{Carmona}), and on the other hand the study of energy transport in harmonic chains with additional conservative stochastic collisions that enable to derive hydrodynamic limits (see \cite{Basile_review} for a review). In fact, stochastic collisions give enough ergodicity to derive such limits, but for long time scales, calculations rely heavily on the harmonic structure of the interactions.

In this paper, we consider a chain with so-called Kac potentials (see \cite{Presutti} for a detailed introduction), \textit{i.e.} we define the coefficients $\phi_k$ by
\begin{equation}\label{definition phi} \phi_k = \frac{1}{\ell N} \phi\left(\frac{k}{\ell N}\right),\end{equation}
for $|k|\leq \ell N$, where $\ell$ is a small parameter and $\phi$ is a smooth even function, with support included in $[-1/2,1/2]$ and normalized so that $\int_{-1/2}^{1/2}\phi(r)dr =1$. Therefore, the model has a local mean field structure at macroscopic distance $\ell$. In addition to the Hamiltonian dynamics, we also add stochastic exchanges of velocity between neighbors at distance of order $\ell N$. To do so, we introduce a smooth function $\gamma$ with the same properties as $\phi$, modulating the intensity of the stochastic exchanges and define for $|k|\leq \ell N$
\begin{equation}\label{definition gamma} \gamma_k = \frac{1}{\ell} \int_{\left[\frac{k-1/2}{N}, \frac{k+1/2}{N}\right]} \gamma\left(\frac{r}{\ell}\right) dr.\end{equation}
We exchange the velocities of two neighbors at distance $k$ at rate $\bar\gamma \gamma_k$, where $\bar\gamma$ is a positive parameter that gives the global rate at which a particle undergoes an exchange of velocity. The stochastic exchanges conserve the total energy of the system. We use the local mean field structure of the problem to prove the convergence of the empirical measure associated with the particle system to a Vlasov-type equation, and prove diffusion of the energy for a class of anharmonic pinning potentials.

We also mention that another model of chain of oscillators with long-range interaction has also been studied in \cite{TS} and \cite{Suda}. In this model, the stochastic collisions are short-range and there is no local mean field structure in the mechanical interactions. The techniques used are then different from ours and are similar to the short-range case \cite{BBO09,JKO15}.

\subsection{Model and notations}

In our setting, particles are indexed by the discrete periodic lattice $\Z/N\Z$. For every $1\leq i \leq N$, we set $r^i=i/N$. More generally, in what follows, the letter $r$ will refer to a position in the periodic domain $\T=\R/\Z$ and $z=(x,v)$ stands for the phase space coordinates of a single particle in the set $E := \R^d\times \R^d$. The dynamics followed by $(X^i_t,V^i_t)$, for $1\leq i\leq N$, is
\begin{equation}\label{original system solid new formalism}
\left\{
\begin{array}{l}
  dX^i_t = V^i_t dt \\
  dV^i_t = -\left(\int_{\T\times E} \Phi_\ell(r^i-r')\nabla W(X^i_t-x')d\mu^N_t(r',z') + \nabla U(X^i_t)\right)dt \\
      \hspace{9cm}+ \int_{\R^d} (v' - V^i_{t^-})d\calN^{\mu^N,r^i}(t,v').
\end{array}
\right.
\end{equation}
In the system \eqref{original system solid new formalism}, we wrote the Hamiltonian contribution by introducing the empirical measure $\mu^N_t$ associated with the system of particles:
\begin{equation}\label{definition empirical measure} \mu^N_t = \frac{1}{N}\sum_{i=1}^N\delta_{r^i,X^i_t,V^i_t}.\end{equation}
$\calN^{\mu^N,r^i}$ is a point process on $\R_+\times \R^d$ that directly selects the new velocity of $V^i$ at rate $\bar \gamma$ among its neighbors' velocity. More precisely, it is given by
$$\calN^{\mu^N,r^i} = \sum_{n}\delta_{T^{i,n},V^{i,n}},$$
where $(T^{i,n})_{n\geq 0}$ is the set of jump times of a Poisson process with intensity $\bar \gamma$ and, for any $n\geq 0$, $V^{i,n}$ is a random variable whose law given $T^{i,n}$ is
$$\Prb\left(V^{i,n} = V^{i+k}_{T^{i,n}_-} \mid T^{i,n}\right) = \gamma_k,$$
for all $-\ell N \leq k \leq \ell N$.  In \eqref{original system solid new formalism}, we also used the notation
\begin{equation}\label{definition phi ell gamme ell} \Phi_\ell(u) = \frac{1}{\ell}\phi\left(\frac{u}{\ell}\right),\end{equation}
and we define similarly
$$\Gamma_\ell(u) = \frac{1}{\ell}\gamma \left(\frac{u}{\ell}\right).$$

\begin{center}
\begin{figure}[h]
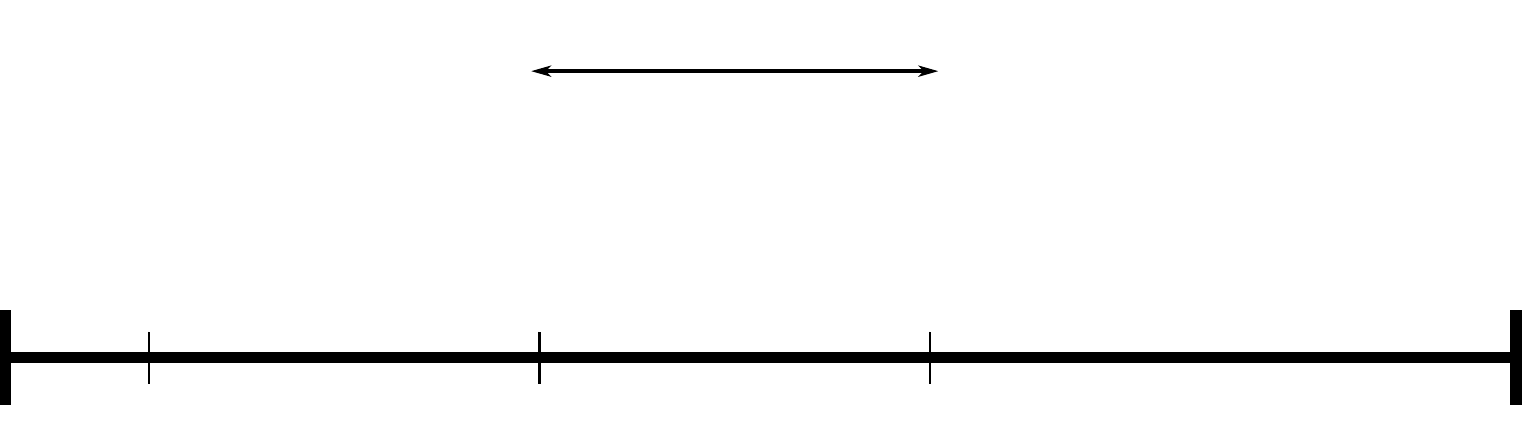
\caption{Different scales of the model and shape of the functions $\Phi_\ell$ and $\Gamma_\ell$}
\end{figure}
\end{center}

To properly define the dynamics for the whole system, we should specify the correlations between the point processes $\calN^{\mu^N,r^i}$, which is essentially that the jumps between two different pairs are independent. The detailed definition of those measures is left to Section \ref{Section 3}. The equation \eqref{original system solid new formalism} is thus a mean field type equation and we expect that the empirical measure $\mu^N_t$ converges to a solution of the following Vlasov-type equation
\begin{align}\label{Vlasov + noise}
&\partial_t f_t + v\cdot\nabla_xf_t - \Bigg(\int_{\T\times E} \Phi_\ell(r-r') \nabla W(x-x')f_t(r',z')dr'dz' + \nabla U(x)\Bigg)\cdot\nabla_vf_t \nonumber \\
                      &\hspace{3.5cm}=\bar\gamma \int_{\T\times E}\Gamma_\ell(r-r') \left(f_t(r,x,v')f_t(r',x',v) - f_t(r,z)f_t(r',z')\right)dr'dz'.
\end{align}
We will actually prove this statement by introducing the following nonlinear martingale problem. For any measure $\nu\in\calM^1(\T \times E)$, the space of probability measures on $\T \times E$, define the operator $\mathcal L[\nu]$ by
\begin{equation}\label{definition operator L} \mathcal L[\nu] \psi =  \mathcal A[\nu] \psi + \bar\gamma \mathcal S[\nu]\psi,\end{equation}
for all $\psi\in C_b^1(\T \times E)$, the space of bounded continuously differentiable real functions of $\T \times E$. $\mathcal A[\nu]$ is a drift operator given by
$$\mathcal A[\nu] \psi(r,x,v) = v \cdot \nabla_x\psi(r,x,v) - \left(\int_{\T \times E}\Phi_\ell(r-r')\nabla W(x-x') d\nu(r',z') + \nabla U(x)\right)\cdot \nabla_v\psi(r,x,v),$$
and
$$\mathcal S[\nu] \psi(r,x,v) = \int_{\T \times E} (\psi(r,x,v') - \psi(r,x,v)) \Gamma_\ell(r-r') d\nu(r',z')$$
is a pure jump operator that exchanges velocities. Denote by $\mathcal{D} = D(\R_+,\T\times E)$ the set of right continuous functions with left limits on $\R_+$ with values in $\T\times E$, by $\calF$ the product $\sigma$-field on $\calD$, and let $(\mathcal{F}_t)_{t\geq 0}$ be the filtration generated by the canonical process $Y=(r,Z)=(r,X,V)$ on $(\mathcal{D},\calF,\mu)$. Then the probability measure $\mu\in M^1(\mathcal D)$ is said to solve the \textit{nonlinear martingale problem} starting at $\nu_0\in \calM^1(\T\times E)$ if $\mu_0 = \nu_0$ and, for any $\psi\in C^1_b(\T\times E)$,
\begin{equation}\label{nonlinear mean field martingale problem}
M^{\psi}_t = \psi(Y_t) - \psi(Y_0) - \int_0^t \mathcal{L}[\mu_s] \psi(Y_s)ds
\end{equation}
is a martingale under $\mu$, $\mu_s$ denoting the time marginal of $\mu$ at time $s$. It is then straightforward to check that a solution to the martingale problem is a weak solution to the Vlasov-type equation \eqref{Vlasov + noise}. In particular, $r_t=r_0$ for any $t>0$, and we will indifferently use either the notation $r_t$, $r_0$ or $r$ to refer to the spatial coordinate of $(Y_t)_{t\geq 0}$ in $\T$. The martingale problem \eqref{nonlinear mean field martingale problem} is nonlinear in the sense that it is defined using the marginals $(\mu_t)_{t\geq 0}$, which are themselves only defined as a byproduct of the solutions to this problem. Taking expectations in \eqref{nonlinear mean field martingale problem}, it is then straightforward to check that the flow of time-marginals $(\mu_t)_{t\geq 0}$ associated with a solution $\mu$ to the martingale problem is a weak solution to the Vlasov-type equation \eqref{Vlasov + noise} in the sense of distributions.

Several existence and uniqueness results exist for the solutions of SDEs and nonlinear martingale problems with similar Poisson-type exchanges in the case of the Kac model (see for instance \cite{Desvillettes, FournierMeleard}, and also \cite{Graham} for a model with additional mean field interaction). Due to the non-exchangeability of the particles' coordinates in our model those results cannot be applied straightforwardly in our setting and we have to study it from scratch.

\subsection{Results}
We will always assume that the following hypotheses hold
\begin{enumerate}[label=\textbf{(H\arabic*)}]
\item \label{H1} $W, U \in \calC^2(\R^d,\R)$, the set of twice differentiable functions of $\R^d$ taking values in $\R$. Moreover $\nabla W$ and $\nabla U$ are uniformly Lipschitz and satisfy $\nabla W(0) = \nabla U(0)=0$. There exists a positive constant $c$ such that for all $x\in\R^d$,
$$|x|^2 \leq c U(x), \hspace{3cm} |\nabla W(x) |^2 \leq c W(x).$$
\item \label{H2} $\phi, \gamma \in \calC_c^\infty(\R)$, the set of infinitely differentiable functions of $\R$ with compact support, and in particular, their support is included in $[-1/2,1/2]$. Moreover $\phi$ and $\gamma$ are nonnegative, even functions that are non-increasing on $[0,1/2]$ and such that $\int_{-1/2}^{1/2}\phi(r)dr  = \int_{-1/2}^{1/2}\gamma(r)dr  = 1$.
\item \label{H7} $\int_{\T\times E}\left(\frac{1}{2}|v|^2 + \frac{1}{2}\int_{\T \times E} \Phi_\ell(r-r')W(x-x')d\mu_0(r',z') + U(x)\right)d\mu_0(r,z) < \infty.$
\item \label{H5} $\mu_0\in\calM^1(\T\times E)$ has a density $f_0$ with respect to the Lebesgue measure and its $r$-marginal is the uniform measure on $\T$. In particular, for any $r\in\T$, $f_0(r,\cdot)$ is a probability density on $E$.\\
Moreover, there exist a probability density $h$ on $E$ with finite first moment $\int_E |z| h(z) dz < \infty$, and a constant $C>0$ such that for any $r,r'\in\T$ and $z\in E$,
$$|f_0(r,z) - f_0(r',z)| \leq C |r-r'| h(z).$$
\end{enumerate}

The Lipschitz assumption in \ref{H1} is classical in mean field theory. The two inequalities on $U$ and $W$ are technical assumptions that hold for harmonic potentials. Hypothesis \ref{H7} is a moment assumption which, by conservation of energy, is crucial. \ref{H5} is the minimal regularity hypothesis on the $r$ variable at time $0$ that we will need to prove the mean field limit. It holds for local Gibbs measures, for which the temperature is a regular function of the $r$ variable for instance. Under these hypotheses, we then have the following proposition.
\begin{Prop}\label{nonlinear mean field martingale problem proposition}
There is a unique solution to the nonlinear martingale problem \eqref{nonlinear mean field martingale problem} starting at $\mu_0$.
\end{Prop}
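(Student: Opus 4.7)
The strategy I would follow is a Picard fixed-point argument in the space of probability measures on path space. Fix a horizon $T>0$ and let $\calK_T$ be the set of $\nu\in\calM^1(\calD)$ with time-$0$ marginal $\mu_0$, with uniform $r$-marginal at every time $t\leq T$ (which is preserved by the dynamics since $r_t=r_0$), and with finite second moment on the energy, equipped with the pseudometric $d(\nu,\nu')=\sup_{t\leq T}W_1(\nu_t,\nu'_t)$, where $W_1$ denotes the Wasserstein-$1$ distance on $\T\times E$. For $\nu\in\calK_T$, define $\Psi(\nu)$ as the law of the inhomogeneous Markov process with generator $\mathcal L[\nu_t]$ started from $\mu_0$. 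Under \ref{H1} the drift $\int\Phi_\ell(r-r')\nabla W(x-x')d\nu_t(r',z')+\nabla U(x)$ is globally Lipschitz in $x$ uniformly in $(r,t,\nu)$, and since the $r$-marginal of $\nu_t$ is uniform and $\int\Gamma_\ell(r-r')dr'=1$, the total jump rate at any state $(r,x,v)$ is exactly $\bar\gamma$. The linear martingale problem therefore has a unique solution built pathwise by interleaving the deterministic flow with Poissonian jumps, and an energy estimate combining \ref{H1} and \ref{H7} with the conservation of $\int|v|^2d\mu_t$ by the jump operator shows that $\Psi$ maps $\calK_T$ into itself.

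To obtain uniqueness of a fixed point I would couple $Y^1\sim\Psi(\nu^1)$ and $Y^2\sim\Psi(\nu^2)$ on one probability space by starting them at the same point sampled from $\mu_0$, using a common Poisson clock of rate $\bar\gamma$ for the jump times, and, at each jump time $s$, sampling the new velocities from an optimal $W_1$-coupling of the two selection kernels $k_s^{\nu^i}(r,dv'):=\int\Gamma_\ell(r-r')d\nu^i_s(r',x',dv')dx'dr'$ for $i=1,2$. Between jumps, the Lipschitz estimate \ref{H1} yields
\[\frac{d}{dt}\E|Y^1_t-Y^2_t|\leq C\bigl(\E|Y^1_t-Y^2_t|+W_1(\nu^1_t,\nu^2_t)\bigr),\]
while at each jump the expected increment of $|V^1-V^2|$ is bounded by $W_1(k_s^{\nu^1}(r,\cdot),k_s^{\nu^2}(r,\cdot))\leq C_\ell W_1(\nu^1_s,\nu^2_s)$. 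Integrating and taking the infimum over couplings produces
\[\sup_{t\leq T}W_1\bigl(\Psi(\nu^1)_t,\Psi(\nu^2)_t\bigr)\leq C_T\int_0^T W_1(\nu^1_s,\nu^2_s)ds.\]
Iterating this inequality shows that $\Psi^n$ is a contraction on $(\calK_T,d)$ with ratio $(C_TT)^n/n!$, hence Banach's fixed-point theorem yields a unique solution to \eqref{nonlinear mean field martingale problem} on $[0,T]$. Since $T$ is arbitrary, this gives a global solution and its uniqueness.

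The step I expect to be most delicate is the coupling at jump times: a small perturbation of the driving law $\nu_s$ could a priori induce order-one perturbations of the post-jump velocity, which would break the Gronwall scheme. The use of an optimal $W_1$-coupling of the selection kernels $k_s^{\nu^1}$ and $k_s^{\nu^2}$ is precisely what converts a Wasserstein distance between measures into an expected displacement of the same order at each jump. Neither the non-exchangeability of the particles nor the parameter $r$ introduces a serious additional difficulty, because $r$ is frozen along trajectories and enters the dynamics only through the smooth bounded convolution kernels $\Phi_\ell$ and $\Gamma_\ell$; analogous constructions in the exchangeable Kac setting, as in \cite{FournierMeleard,Graham}, adapt with essentially cosmetic changes once the fixed-point space is restricted to measures with uniform $r$-marginal.
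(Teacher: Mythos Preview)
Your Picard scheme and your jump-time coupling match the paper's overall strategy, but the contraction estimate you state does not hold for the ordinary Wasserstein distance on $\T\times E$, and this is exactly the point the paper identifies as nontrivial.

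The failure is in the two inequalities you take for granted: the drift bound $\frac{d}{dt}\E|Y^1_t-Y^2_t|\leq C(\E|Y^1_t-Y^2_t|+W_1(\nu^1_t,\nu^2_t))$ and the jump bound $W_1(k_s^{\nu^1}(r,\cdot),k_s^{\nu^2}(r,\cdot))\leq C_\ell W_1(\nu^1_s,\nu^2_s)$. Both require that the test functions $(r',z')\mapsto \Phi_\ell(r_0-r')\nabla W(x-x')$ and $(r',z')\mapsto \Gamma_\ell(r_0-r')\varphi(v')$ be uniformly Lipschitz on $\T\times E$. They are not: differentiating in $r'$ produces a factor $|\nabla W(x-x')|$ (respectively $|\varphi(v')|\leq|v'|$), which is unbounded since $\nabla W$ is only Lipschitz with $\nabla W(0)=0$. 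Concretely, for any coupling $\pi_t$ of $\nu^1_t$ and $\nu^2_t$ the cross term
\[
\int \bigl|\Phi_\ell(r_0-r')-\Phi_\ell(r_0-r'')\bigr|\,|\nabla W(x-x'')|\,d\pi_t(y',y'')
\]
involves the product $|r'-r''|\cdot|x''|$ and is not controlled by $\int|y'-y''|d\pi_t$. Your final remark that the $r$-variable causes only ``cosmetic'' changes is therefore precisely backwards.

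The paper's remedy is to run the fixed-point argument in the \emph{sliced} Wasserstein distance
\[
\calS\calW_{\infty,T}(Q^1,Q^2)=\int_\T \calW_{\infty,T}(Q^{1,r},Q^{2,r})\,dr,
\]
i.e.\ one first conditions on $r$ and then couples the conditional laws on $D([0,T],E)$. Since both processes in the coupling then share the same $r_0$, and the couplings $\pi^r$ of $Q^{1,r}$ and $Q^{2,r}$ only move the $z$-coordinates, the weights $\Phi_\ell(r_0-r)$ and $\Gamma_\ell(r_0-r)$ appear only as bounded factors, and the Lipschitz property of $\nabla W$ in $x$ alone suffices. This yields a genuine contraction $\calS\calW_{\infty,T}(\mu^1,\mu^2)\leq c_T\,\calS\calW_{\infty,T}(Q^1,Q^2)$ for $T$ small, after which the iteration and globalisation proceed exactly as you outline.
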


In particular, Proposition \ref{nonlinear mean field martingale problem proposition} implies existence of weak solutions to \eqref{Vlasov + noise}. Denote by $\calW_1$ the Wasserstein distance associated with the Euclidean norm $| \cdot |$ on $\T \times E$
\begin{equation}\label{definition Wasserstein}\calW_{1} (\mu,\nu) = \inf_{\pi\in\Pi(\mu,\nu)} \int_{\T \times E} | y-y' |d\pi(y,y'),\end{equation}
where $\Pi(\mu,\nu)$ is the set of couplings of the probability measures $\mu,\nu\in\calM^1(\T \times E)$, and we used the shortened notation $y:=(r,z) =(r,x,v)$. We are now able to state the mean field convergence result.

\begin{Thm}\label{Convergence result}
Let $\mu$ be the solution of the nonlinear martingale problem \eqref{nonlinear mean field martingale problem} starting at $\mu_0$. Assume that the initial coordinates $(Z_0^i)_{i\leq N}$, with $Z_0^i = (X_0^i,V_0^i)$, are independent and with respective density distribution $f_0(i/N,z)$ for all $1\leq i \leq N$. Then there exist two positive constants $K_1$ and $K_2$ such that for any $1/N<\epsilon_N<\ell$,
$$\E\left[\mathcal{W}_1(\mu^N_t,\mu_t)\right] \leq K_1 \left((N\epsilon_N)^{-\frac{1}{4(d+1)}} + \frac{\epsilon_N}{\ell} + \frac{\bar\gamma}{1+\bar\gamma} \frac{\epsilon_N^{1/2}}{\ell^{1/2}}\right)\e^{K_2(1+\bar\gamma) t}.$$
\end{Thm}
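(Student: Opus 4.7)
The plan is to adapt the synchronous coupling (Sznitman) approach for propagation of chaos to the non-exchangeable Kac setting of the chain. On the probability space carrying the particle system $(Z^i_t)_{i\leq N}$ I would build $N$ independent \emph{nonlinear companions} $(\bar Z^i_t)_{i\leq N}$, with $\bar Z^i_0 = Z^i_0$, whose spatial label is frozen at $r^i$ and whose conditional law given $r^i$ is that of the solution to the nonlinear martingale problem \eqref{nonlinear mean field martingale problem} provided by Proposition \ref{nonlinear mean field martingale problem proposition}. The $\bar Z^i$ are driven by the \emph{same} Poisson jump clocks $(T^{i,n})$ as the particle system, and at each firing time one uses a maximal coupling between the particle-system selection rule (choose $V^{i+k}_{t^-}$ with probability $\gamma_k$) and the nonlinear rule (draw a velocity according to $\int \Gamma_\ell(r^i - r')\, g_t(r', \cdot)\, dr'$, where $g_t$ is the conditional density of the $v$-marginal of $\mu_t$). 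Independence of the $\bar Z^i$ is preserved because the Poisson clocks are independent across $i$.

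Writing $\bar\mu^N_t := \frac{1}{N}\sum_i \delta_{(r^i, \bar Z^i_t)}$, the triangle inequality gives
$$\E\,\calW_1(\mu^N_t, \mu_t) \leq \E\,\calW_1(\mu^N_t, \bar\mu^N_t) + \E\,\calW_1(\bar\mu^N_t, \mu_t).$$
The first term is bounded by $\frac{1}{N}\sum_i \E|Z^i_t - \bar Z^i_t|$, which I would close by a Grönwall argument. Expanding the drift difference, the Kac mean-field force splits into a Lipschitz piece in $X^i$ producing terms of the form $\frac{1}{N}\sum_j \Phi_\ell(r^i-r^j)\bigl(|Z^i_t - \bar Z^i_t| + |Z^j_t - \bar Z^j_t|\bigr)$, absorbed into Grönwall using $\frac{1}{N}\sum_j \Phi_\ell(r^i-r^j)\approx 1$, and a ``fluctuation'' piece $\int \Phi_\ell(r^i-r')\,\nabla W(\bar X^i_t - x')\,d(\bar\mu^N_t - \mu_t)(r', z')$, tested against a fixed integrand. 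Unboundedness of $\nabla W$ is tamed via \ref{H1} (the inequalities $|x|^2\leq cU(x)$ and $|\nabla W|^2 \leq cW$) together with the conservation of energy \ref{H7}, which furnishes uniform-in-time $L^2$ moments. For the jump contribution, the maximal coupling yields at each firing time a disagreement probability equal to a total-variation distance between the discrete neighbor mixture $\sum_k \gamma_k \delta_{V^{i+k}_{t^-}}$ and its macroscopic counterpart under $\mu_t$; after bookkeeping this contributes a term proportional to $\bar\gamma$ in both the Grönwall exponent (yielding $e^{K_2(1+\bar\gamma)t}$) and the LLN residual.

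The statistical error $\E\,\calW_1(\bar\mu^N_t, \mu_t)$ is the heart of the estimate. Here I would partition $\T$ into $\epsilon_N^{-1}$ intervals of length $\epsilon_N$: within each cell the independent processes $\bar Z^i$ are nearly identically distributed, because the Lipschitz regularity of $f_0$ in $r$ from \ref{H5} propagates through the nonlinear dynamics, giving $\|\mu_t(r,\cdot)-\mu_t(r',\cdot)\|\lesssim |r-r'|$ in a suitable norm. A Fournier--Guillin--type quantitative LLN applied cell by cell --- roughly $N\epsilon_N$ approximately i.i.d.\ samples in a $2d$-dimensional phase space, aggregated into a global Wasserstein bound --- produces the rate $(N\epsilon_N)^{-1/(4(d+1))}$, the exponent reflecting the combination of local concentration with moment control and the integration over cells. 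The coarse-graining at scale $\epsilon_N$ also accounts for the discrepancy between $\Phi_\ell$-convolved quantities and their $\epsilon_N$-resolved approximations, contributing the $\epsilon_N/\ell$ term, while the analogous step applied to the jump coupling, in which a total-variation bound is converted into a Wasserstein one through Cauchy--Schwarz against the $\Gamma_\ell$ weight, gives the $\epsilon_N^{1/2}/\ell^{1/2}$ term.

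The principal obstacle I anticipate is the jump part. Coupling the selection of a new velocity among the close neighbors of $i$ with the selection from a smooth integral against $\mu_t$ naturally produces total-variation quantities that are \emph{not} directly dominated by a Wasserstein distance, and passing from one to the other without losing a factor $1/\ell$ --- which would spoil the exponential factor $e^{K_2(1+\bar\gamma)t}$ --- requires the intermediate scale $\epsilon_N$ to mediate between the microscopic spacing $1/N$ and the macroscopic Kac scale $\ell$. Constraining $\epsilon_N \in (1/N, \ell)$ and combining the three error contributions through the Grönwall closure then yields the stated bound.
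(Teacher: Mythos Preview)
Your overall architecture is close to the paper's, but there are two genuine gaps that would make the argument fail as written.

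\textbf{The Poisson clocks are not independent across $i$.} The noise here consists of conservative \emph{exchanges}: when particles $i$ and $i+k$ swap velocities, this is a single event of a global Poisson measure on $\R_+\times\T^2$, and both $\calN^i$ and $\calN^{i+k}$ fire simultaneously (see the construction \eqref{driving Poisson point process}--\eqref{definition Ni}). So your sentence ``Independence of the $\bar Z^i$ is preserved because the Poisson clocks are independent across $i$'' is simply false for this model. Any nonlinear companions driven by the same Poisson input as the particle system will be heavily correlated. The paper handles this with a three-system strategy \`a la Cortez--Fontbona: first couple $(Y^i)$ to correlated nonlinear processes $(\widetilde Y^i)$ sharing the true $\calN^i$, then \emph{decouple} by building a second family $(\bar Y^i)$ in which the shared atoms falling in the same mesoscopic box $B_j$ are replaced by atoms of an independent copy $\calM$. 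The cost of this replacement is exactly the $\frac{\bar\gamma}{1+\bar\gamma}\,\epsilon_N^{1/2}/\ell^{1/2}$ term. Without this decoupling step you cannot invoke any law of large numbers.

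\textbf{Regularity in $r$ is not known to propagate.} You write that the Lipschitz continuity of $f_0(r,\cdot)$ from \ref{H5} ``propagates through the nonlinear dynamics'', and then use this to say the $\bar Z^i$ in a cell are nearly i.i.d. The paper explicitly does \emph{not} know this (and says so): no regularity of $\mu_t$ in $r$ at positive times is established. Instead, the paper forces the nonlinear processes within a box $B_j$ to be genuinely identically distributed by \emph{randomizing} their spatial label: $\widetilde r^i_0$ is taken uniform on $B_j$ rather than equal to $i/N$, so that $\widetilde Y^i_t$ has law $\mu^j_t=\epsilon_N^{-1}\mathbbm{1}_{r\in B_j}\mu_t$ for every $i\in NB_j$. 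The price is an initial mismatch $\E|Y^i_0-\widetilde Y^i_0|=O(\epsilon_N)$ (Lemma \ref{Initial positions}), and the uniform bound $|i/N-\widetilde r^i_0|\le\epsilon_N$ is precisely what, combined with the energy estimate of Lemma \ref{Lemma bound potential energy}, turns the non-Lipschitz force $\Phi_\ell(r)\nabla W(x)$ into the $\epsilon_N/\ell$ contribution.

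A smaller point: the jump coupling in the paper is the $\calW_1$-optimal one (the map $\Pi^i$ of Lemma \ref{Optimal Coupling solid}), not a maximal (total-variation) coupling. This is what lets the jump error feed directly back into Wasserstein quantities and close the Gr\"onwall loop; your own last paragraph correctly flags that a TV-based coupling would not.
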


The constants $K_1$ and $K_2$ in the theorem do not depend on the three parameters $N$, $\ell$ and $\bar\gamma$, but only on the potentials $W$ and $U$, on the functions $\phi$ and $\gamma$, and the initial measure $\mu_0$. $\epsilon_N$ is a coarse-graining parameter that naturally appears in the proof of Theorem \ref{Convergence result} and is precisely defined in Section \ref{Section 3}. In particular, for fixed $\bar\gamma$, choosing $\epsilon_N = \ell^{\frac{2d+2}{2d+3}} N^{-\frac{1}{2d+3}}$, we deduce that there exist positive constants $K$ and $K'$ such that for any $\ell > 1/N$, 
$$\E\left[\mathcal{W}_1(\mu^N_t,\mu_t)\right] \leq K (\ell N)^{-\frac{1}{2(2d+3)}} \e^{K' t}.$$

One of the main features of the particle system we consider is that, contrary to classical mean field theory for gases, the sequence $(Z^i_t)_{i\leq N}$ is not exchangeable. The behavior of $Z^i_t$ is intrinsically tied with the position $r^i$ in the chain. In particular, one cannot prove the mean field limit by comparing the law of one typical oscillator $Z^i_t$ at time $t$ to $\mu_t$, as usually done in the mean field theory (see \cite{Dobrushin, Golse, Sznitman}). Instead, the whole system $(r^i,Z^i_t)_{i\leq N}$ has to be compared to $\mu_t$ in its entirety.

Another difficulty comes from the fact that, even if $\nabla W$ is uniformly Lipschitz over $\R^d$, the map $(r,x)\mapsto \Phi_\ell(r)\nabla W(x)$ is not uniformly Lipschitz over $\T\times \R^d$ in general. Consequently, classical mean field limit proofs do not readily work in this situation, and even the proof of Proposition \ref{nonlinear mean field martingale problem proposition} is not straightforward. We bypass this difficulty for Proposition \ref{nonlinear mean field martingale problem proposition} by proving a contraction estimate for a well-suited distance, the sliced Wasserstein distance (see Definition \ref{sliced wasserstein}). Its proof is postponed to Section \ref{Section 2}.

Theorem \ref{Convergence result} is proved in Section \ref{Section 3}. The proof is based on a coupling of the particle system $(Y^i)_{i\leq N} := (r^i,Z^i)_{i\leq N}$ to a new system $(\widetilde Y^i)_{i\leq N}$, whose law is based on the solution $\mu$ to the nonlinear martingale problem \eqref{nonlinear mean field martingale problem}, and which are driven by the same Poisson measures as the original particle system. We use an original coupling over mesoscopic boxes in $\T$, and control directly the averages $1/N\sum\E[|Y^i-\widetilde Y^i|]$ to circumvent the aforementioned difficulties arising in this mean field limit. Moreover, contrary to classical McKean-Vlasov theory (see \cite{Sznitman} for instance), since the stochastic terms contribute via conservative exchanges, the new system $(\widetilde r^i, \widetilde Z^i)_{i\leq N}$ is heavily correlated. We will therefore introduce a third system of independent processes, based on the techniques developed by Cortez and Fontbona in \cite{Cortez} for the Kac model (see also \cite{Fournier2} for an example with continuous exchanges).

Finally, in Section \ref{Section 4}, we study transport of energy in appropriate scales for the system by using this mean field limit. We will consider the following additional moment and symmetry hypothesis on the initial distribution $f_0$:
\begin{enumerate}[label=\textbf{(H\arabic*)},resume]
\item \label{H8} for any $(r,z)\in\T\times E$, $f_0(r,z)=f_0(r,-z)$.
\item \label{H9} $\int_{\T\times E}\left( |v|^{2+2b} + |x|^{2+2b} \right) d\mu_0(r,x,v) <\infty$ for some $b>0$.
\end{enumerate}

We prove in Lemma \ref{Symmetry} that the symmetry \ref{H8} is preserved at any later time $t$ for the solution of the nonlinear martingale problem. Therefore, for a harmonic interaction potential $W(x)=|x|^2/2$, $\mu$ is a weak solution to the simpler equation
\begin{align}\label{simple Vlasov}
&\partial_t f_t + v\cdot\nabla_xf_t - \Bigg(x +  \nabla U(x)\Bigg)\cdot\nabla_vf_t \nonumber \\
                      &\hspace{3.5cm}= \bar\gamma \int_{\T\times E}\Gamma_\ell(r-r') \left(f_t(r,x,v')f_t(r',x',v) - f_t(r,z)f_t(r',z')\right)dr'dz',
\end{align}
and the term coming from the interaction potential is therefore reduced to an additional pinning term (associated with a harmonic potential). Therefore, energy is only transmitted by the noise in the mean field limit. Define the energy of particle $i$ at time $t$ by
\begin{equation}\label{definition energy particle system}\calE^i_t := \frac{1}{2} |V_t^i|^2 + U(X_t^i) +\frac{1}{4}\sum_{k=-\ell N}^{\ell N} \phi_k | X_t^i - X_t^{i+k} |^2,\end{equation}
we prove that $\calE^i_t$ evolves diffusively for a class of anharmonic pining potentials $U$.

\begin{Prop}\label{Energy evolution}
Suppose $W$ is harmonic and $U(x) = |x|^2\psi(x/|x|)$, where $\psi\in C^2(\Sph^{d-1},\R^*_{+})$. $\frac{1}{N}\sum_{i=1}^N\calE_{t\ell^{-2}}^i\delta_{i/N}$ converges to the solution of
\begin{equation*}
\left\{
\begin{array}{l}
  \partial_t e_t = \bar\gamma \frac{c_\gamma}{2} \partial^2_{xx} e_t \\
  e_0(r) = \int_{E} \left(\frac{1}{2}|v|^2 + U(x) + \frac{1}{2}|x|^2\right)d\mu_0(r,x,v),
\end{array}
\right.
\end{equation*}
where $c_\gamma = \frac{1}{2}\int_{-1/2}^{1/2}u^2\gamma(u) du$, in the sense that for any $g\in \calC^4(\T)$ and any $t>0$,
$$\lim_{\ell\to 0} \lim_{N\to \infty} \sup_{t\leq T}  \E\left[\left | \frac{1}{N}\sum_{i=1}^N \calE^i_{t\ell^{-2}} g\left(\frac{i}{N}\right) - \int_{\T} e_t(r) g(r) dr \right | \right] = 0.$$
\end{Prop}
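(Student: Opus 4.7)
The plan is to pass to the mean-field limit using Theorem~\ref{Convergence result}, derive a closed evolution for the local energy density of the Vlasov solution, and then perform a virial-based equipartition argument at the diffusive scale.

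\textbf{Mean-field reduction.} Theorem~\ref{Convergence result} gives $\calW_1$-convergence of $\mu^N_t$ to $\mu^\ell_t$, but the energy functional $H(x,v)=\tfrac12|v|^2+U(x)+\tfrac12|x|^2$ is unbounded. To upgrade to convergence of energy statistics I would first establish uniform-in-$N$ moment bounds on $\tfrac{1}{N}\sum_i |Z^i_t|^{2+2b}$, which follow from \ref{H9} and a Lyapunov argument (the noise conserves total energy and higher-moment losses are controlled by the Hamiltonian drift). Combined with the harmonic form of $W$ and the symmetry from Lemma~\ref{Symmetry} (which gives $\int x'\,d\mu^\ell_t(r',\cdot)=0$, so that the mean-field potential energy $\tfrac14\int\Phi_\ell(r-r')|x-x'|^2\,d\mu^\ell_t$ reduces to $\tfrac14|x|^2+\tfrac12\int\Phi_\ell(r-r')Q^\ell_t(r')\,dr'$, within $O(\ell^2)$ of $\tfrac12|x|^2+\tfrac12Q^\ell_t(r)$ for smooth $Q^\ell_t$), the problem reduces to proving that for every $g\in\calC^4(\T)$,
\begin{equation*}
\lim_{\ell\to 0}\sup_{t\leq T}\Bigl|\int_\T\tilde e^\ell_{t\ell^{-2}}(r)g(r)\,dr-\int_\T e_t(r)g(r)\,dr\Bigr|=0,\qquad \tilde e^\ell_t(r):=\int_E H(x,v)f^\ell_t(r,z)\,dz.
\end{equation*}

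\textbf{Evolution of $\tilde e^\ell_t$.} Multiplying \eqref{simple Vlasov} by $H(x,v)$ and integrating in $z$, the Hamiltonian drift contribution cancels because $H$ is preserved by its own flow. Using $\int f^\ell_t(r,z)\,dz\equiv 1$ (preserved by the equation) together with Lemma~\ref{Symmetry}, the noise simplifies to
\begin{equation*}
\partial_t\tilde e^\ell_t(r)=\bar\gamma\int_\T\Gamma_\ell(r-r')\bigl[K^\ell_t(r')-K^\ell_t(r)\bigr]\,dr',\qquad K^\ell_t(r):=\int_E\tfrac12|v|^2 f^\ell_t(r,z)\,dz.
\end{equation*}
Testing against $g$ and using the evenness and compact support of $\gamma$, Taylor expansion yields $(\Gamma_\ell * g)(r)-g(r)=\ell^2 c_\gamma g''(r)+O(\ell^4\|g^{(4)}\|_\infty)$ uniformly in $r$.

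\textbf{Virial identity.} The crucial structural input is that $U(x)+\tfrac12|x|^2=|x|^2\bigl(\psi(x/|x|)+\tfrac12\bigr)$ is positively homogeneous of degree $2$, so that Euler's identity gives $x\cdot\nabla\bigl(U+\tfrac12|x|^2\bigr)=2\bigl(U+\tfrac12|x|^2\bigr)$. Testing \eqref{simple Vlasov} against $x\cdot v$, using this identity and the fact that the local velocity mean $\int v f^\ell_t(r,z)\,dz$ vanishes by Lemma~\ref{Symmetry} (so the bilinear noise reduces to a linear $-\bar\gamma M^\ell_t$), one obtains
\begin{equation*}
\partial_t M^\ell_t(r)=2\bigl(2K^\ell_t(r)-\tilde e^\ell_t(r)\bigr)-\bar\gamma M^\ell_t(r),\qquad M^\ell_t(r):=\int_E x\cdot v\,f^\ell_t(r,z)\,dz,
\end{equation*}
which rearranges to the key equipartition identity $K^\ell_t-\tfrac12\tilde e^\ell_t=\tfrac14\bigl(\partial_t M^\ell_t+\bar\gamma M^\ell_t\bigr)$. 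Substituting this into the evolution of $\tilde e^\ell_t$, integrating over $[0,T\ell^{-2}]$, rescaling to diffusive time $s=t\ell^2$, and using $|M^\ell_t(r)|\leq K^\ell_t(r)+Q^\ell_t(r)\leq \tilde e^\ell_t(r)$ together with conservation of $\int\tilde e^\ell_t\,dr$ to bound boundary contributions by $O(\ell^2)$, one arrives at
\begin{equation*}
\int\tilde e^\ell_{T\ell^{-2}}g\,dr-\int e_0 g\,dr=\frac{\bar\gamma c_\gamma}{2}\int_0^T\int\tilde e^\ell_{s\ell^{-2}}g''\,dr\,ds+\frac{\bar\gamma^2 c_\gamma}{4}\int_0^T\int M^\ell_{s\ell^{-2}}g''\,dr\,ds+o_\ell(1).
\end{equation*}

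\textbf{Main obstacle and conclusion.} The hard part will be to show that the residual $M^\ell$-integral on the right-hand side vanishes in the limit $\ell\to 0$. Heuristically this is true because on the diffusive scale the local law $f^\ell_{t\ell^{-2}}(r,\cdot)$ should be close to a local Gibbs state $Z_{\beta_t(r)}^{-1}\e^{-\beta_t(r)H(x,v)}$, for which $M\equiv 0$ by the $v\mapsto -v$ parity of $H$. To make this rigorous I would perform either a two-scale averaging argument — exploiting the boundedness of the Hamiltonian orbits of $H$ (from the coercivity of $U+\tfrac12|x|^2$) to obtain equipartition along each orbit via the classical virial argument, together with the velocity-exchange noise producing mixing across orbits — or a Yau-type relative-entropy estimate of $f^\ell_s$ against the local Gibbs state. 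Once $\int_0^T\int M^\ell_{s\ell^{-2}}g''\,dr\,ds\to 0$ is established uniformly for $T$ in compact sets, the identity above, combined with a Gronwall comparison to the heat semigroup generated by $\tfrac{\bar\gamma c_\gamma}{2}\partial_x^2$, yields the claimed convergence.
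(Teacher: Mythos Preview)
Your overall architecture---mean-field reduction via Theorem~\ref{Convergence result} plus moment truncation, followed by a virial-based equipartition argument---is exactly the paper's route, and your virial identity $\partial_t M^\ell_t=2(2K^\ell_t-\tilde e^\ell_t)-\bar\gamma M^\ell_t$ is correct.

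The gap is that what you call the ``main obstacle'' is in fact a one-line observation that you have overlooked. The velocity-exchange noise acts only on $v$, never on $x$; consequently the evolution of $|x|^2$ under \eqref{simple Vlasov} has no jump term at all, and one has the exact identity
\[
M^\ell_t(r)=\int_E x\cdot v\,f^\ell_t(r,z)\,dz=\frac12\,\partial_t\!\int_E |x|^2 f^\ell_t(r,z)\,dz.
\]
Hence your residual telescopes:
\[
\int_0^T\!\!\int_\T M^\ell_{s\ell^{-2}}(r)g''(r)\,dr\,ds
=\ell^2\!\int_0^{T\ell^{-2}}\!\!\int_\T M^\ell_\tau g''\,dr\,d\tau
=\frac{\ell^2}{2}\int_\T\bigl(Q^\ell_{T\ell^{-2}}-Q^\ell_0\bigr)g''\,dr,
\]
with $Q^\ell_t(r)=\int|x|^2 f^\ell_t(r,z)\,dz$. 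The right-hand side is $O(\ell^2)$ uniformly in $T$ by the uniform second-moment bound of Lemma~\ref{bound speed}. This is precisely how the paper disposes of the same term (see \eqref{derivative x square}--\eqref{vanishing hamiltonian current}); no two-scale averaging, no relative entropy, and no closeness to local Gibbs states is needed. Once you insert this, your displayed identity closes directly onto the heat equation and the Gronwall comparison you indicate finishes the argument.
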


The potentials $U$ in Proposition \ref{Energy evolution} are exactly $C^2$ homogeneous functions of degree 2, which satisfy \ref{H1} and for which one can prove an equipartition theorem to close the diffusion equation for the energy. In particular, we will prove that one can construct a function $\ell = \ell(N) = c(\log N)^{-1/2}$ for some constant $c>0$, such that
\begin{equation}\label{limit with ell(N)}\lim_{N\to \infty} \sup_{t\leq T}  \E\left[\left | \frac{1}{N}\sum_{i=1}^N \calE^i_{t\ell(N)^{-2}} g\left(\frac{i}{N}\right) - \int_{\T} e_t(r) g(r) dr \right | \right] = 0.\end{equation}



\section{Mean field limit}\label{Section 3}


To define fully define the system \eqref{original system solid new formalism} with the correlation structure of the different Poisson random measures, we use a construction similar to \cite{Cortez} for the Kac model. Instead of considering a collection of Poisson random measure on $\R_+\times \R^d$ that select velocities as in \eqref{original system solid new formalism}, we first define a global Poisson random measure $\calN$ on $\R_+\times\T^2$ that selects the pairs of particles which exchange velocities. $\calN$ selects points in the periodic square $\T^2$ with intensity
\begin{equation}\label{driving Poisson point process}
\bar\gamma N^2 \sum_{i=1}^N\left(\gamma_0 \mathbbm{1}_{(r,r')\in\Lambda^i\times \Lambda^i} + \frac{1}{2}\sum_{\substack{k=-\ell N \\ k\neq 0}}^{\ell N}\gamma_{k} \mathbbm{1}_{(r,r')\in\Lambda^i\times \Lambda^{i+k}} \right)dtdrdr',
\end{equation}
where $\Lambda^i = [(i-1/2)/N, (i+1/2)/N]$ for $-N < i \leq N$. Note that we identify $\Lambda^{i}$ with $\Lambda^{i-N}$ for any $1\leq i \leq N$ as we work in the torus. The velocity exchanges between two different particles with indices $i$ and $i+k$ only happen when the Poisson random measure hits either $\Lambda^i\times\Lambda^{i+k}$ or $\Lambda^{i+k}\times\Lambda^i$, which happens at rate $\bar\gamma \gamma_k$.

We associate to the particle indexed by $i$ a Poisson random measure $\calN^i$ on $\R_+\times \T$ that only selects the velocity exchanges between this particle and one of its neighbours:
\begin{equation}\label{definition Ni} \calN^i(dt,dr) = \calN(dt,\Lambda^i,dr) + \calN(dt,dr,\Lambda^i) - \calN(dt,\Lambda^i,\Lambda^i). \end{equation}
$\calN^i$ has thus intensity
\begin{equation}\label{intensity practical Poisson}
\bar \gamma N \sum_{k=-\ell N}^{\ell N}\gamma_{k} \mathbbm{1}_{r\in \Lambda^{i+k}} dtdr.
\end{equation}

\begin{center}
\begin{figure}[h]
\center
\def\svgwidth{0.35\textwidth}
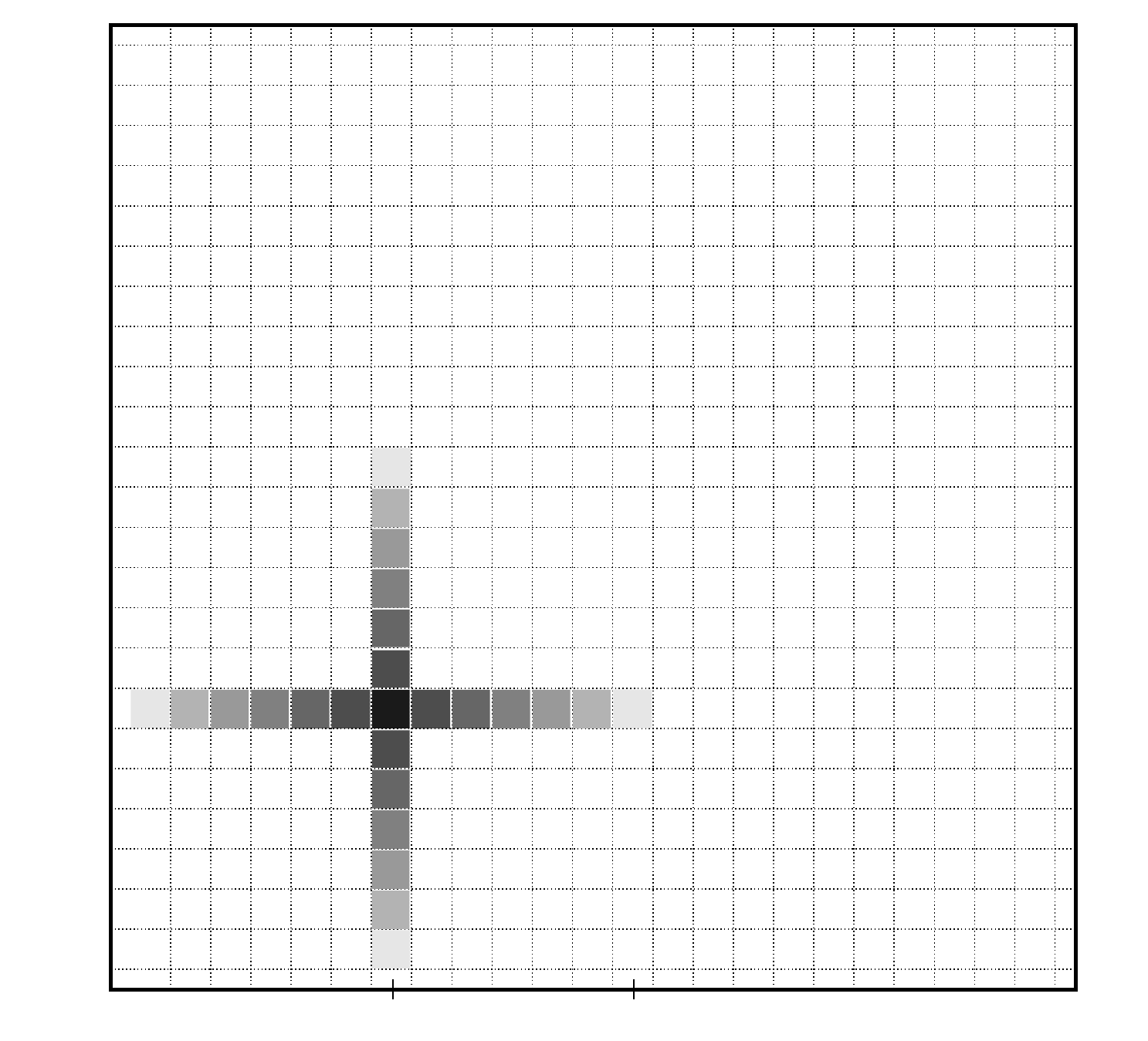
\caption{The colors indicate the intensity of the Poisson random measure $\calN$ over the line  $\T\times\Lambda^i$ and the column $\Lambda^i\times\T$, from which we construct the Poisson random measure $\calN^i$. In this figure, the random measure hits the square $\Lambda^i\times\Lambda^{i+2}$, which thus produces an exchange of velocity between particles $i$ and $i+2$.}
\end{figure}
\end{center}

With these notations, the evolution of the coordinates $(X^i_t,V^i_t)$ for any $1\leq i \leq N$ is given by:
\begin{equation}\label{original system solid}
\left\{
\begin{array}{l}
  dX^i_t = V^i_t dt \\
  dV^i_t = -\left(\sum_{k=-\ell N}^{\ell N} \phi_k \nabla W(X^i_t-X^{i+k}_t) + \nabla U(X^i_t) \right)dt + \int_{\T}(V^{[ N r' ]}_t - V^i_{t^-})d\mathcal{N}^i(t,r').
\end{array}
\right.
\end{equation}
We denote by the brackets in $[ N r' ]$ the rounded value to the nearest integer of $N r'$, which might be bigger or smaller than $N r'$, and with any arbitrary convention for half-integers. The time intensity of $\calN^i$ being bounded by $\bar \gamma$, this evolution is well defined for any $N$ under hypothesis \ref{H1} and \ref{H2} on the potentials. Adapting the construction from \cite{Cortez}, we now define a family of processes $(\widetilde Y^i)_{i\leq N}$ coupled to the original system $(Y^i)_{i\leq N}$, by
\begin{equation}\label{nonlinear process definition}
\left\{
\begin{array}{l}
  d\widetilde X^i_t = \widetilde V^i_t dt \\
  d\widetilde V^i_t = -\left(\int_{\T\times E} \Phi_\ell(\widetilde r^i_0-r')\nabla W(\widetilde X^i_t-x')d\mu_t(y')+ \nabla U(\widetilde X^i_t)\right)dt \\
  \hspace{9cm} + \int_{\T} (\Pi^i_t(r') - \widetilde V^i_{t^-})d\calN^{i}(t,r'),
\end{array}
\right.
\end{equation}
where $\mu$ is the solution of the nonlinear martingale problem \eqref{nonlinear mean field martingale problem}. $\widetilde Y^i$ and $Y^i$ are thus driven by the same Poisson random measure $\calN^i$ for any $i\leq N$. $\Pi^i$ is a measurable mapping that will be precisely defined in Lemma \ref{Optimal Coupling solid}, in such a way that when $\calN^i$ selects a neighbor, the update of velocity for $Y^i$ and $\widetilde Y^i$ is close enough. $\Pi^i_t(r')$ should be actually also written as a function $\Pi^i_t(r',\widetilde r^i_0, \bold V^i_{t_-})$ of $\widetilde r^i_0$ and of the vector $\bold V^i_{t_-} := (V^{i+k}_{t_-})_{-\ell N \leq k \leq \ell N}$, but we omit this dependence for notational convenience. Also, notice that the spatial parameters $r^i$ and $\tilde r^i$ are constant and equal to their initial values $r^i_0$ and $\tilde r^i_0$. We call $(\widetilde Y^i)_{i\leq N}$ the \textit{nonlinear processes}.

It just remains to choose the initial distribution of $(\widetilde Y^i)_{i\leq N}$. Let us first comment on the proof strategy before defining $(\widetilde Y^i_0)_{i\leq N}$. By triangular inequality,
\begin{equation}\label{goal bound}\E\left[\calW_1(\mu^N_t,\mu_t)\right] \leq \E\left[\calW_1(\mu^N_t,\widetilde\mu^N_t)\right] + \E\left[\calW_1(\widetilde\mu^N_t, \mu_t)\right].\end{equation}
where $\widetilde \mu^N_t = 1/N\sum\delta_{Y^i_t}$ is the empirical measure associated with the nonlinear processes. The first term on the right-hand side can be controlled by the coupling, while we need to prove an instance of law of large numbers for empirical measures to control the second term. To prove a law of large numbers result, we face two major issues. First, the system $(\widetilde Y^i)_{i\leq N}$ is strongly correlated since the Poisson random measures $(\calN^i)_{i\leq N}$ share atoms. Following \cite{Cortez}, we will define a new system of nonlinear \textit{independent} processes $(\bar Y^i)_{i\leq N}$ from $(\widetilde Y^i)_{i\leq N}$ and compare both systems. In particular, we will take the initial conditions $\bar Y^i_0$ and $\widetilde Y^i_0$ to be equal almost surely for any $i\leq N$.

Secondly, to prove a law of large numbers type result for $\bar \mu^N_t$, we need the variables $\bar Y^i_t$ to have a similar enough distribution. But, even if we have some regularity on the $r$-parameter for the initial distribution $\mu_0$ by \ref{H7}, we do not know any regularity property at time $t$. In particular, we dot not know to what extent two nonlinear processes $\bar Y^i$ and $\bar Y^j$ with close spatial parameters $\bar r^i$ and $\bar r^j$ have a similar distribution. Therefore, we will deal with truly identically distributed random variables, by diviging $\T$ into mesoscopic boxes and requiring that nonlinear processes in the same mesoscopic box have the same law.

Let $\epsilon_N$ be a parameter representing the macroscopic size of the aforementioned boxes such that $1/N<\epsilon_N<1$, and assume $\epsilon_N$ goes to $0$ with $N$ at a rate to be defined later on. Assume for simplicity that the parameter $\epsilon_N$ is such that $\epsilon_N^{-1}$ is an integer. We subdivide $\T$ in exactly $\epsilon_N^{-1}$ boxes $(B_j)_{1\leq j\leq \epsilon_N^{-1}}$, where $B_j=]j\epsilon_N,(j+1)\epsilon_N]$, for $1\leq j \leq \epsilon_N^{-1}$. We add the extra technical assumption that the integer $\epsilon_N^{-1}$ divides $N$, so that each box contains exactly $N\epsilon_N$ particles of the original system. Denote by $N B_j $ the set of indices of the particles in the box $B_j$:
$$N B_j = \{j N\epsilon_N+1,j N\epsilon_N+2,...,(j+1) N\epsilon_N  \}.$$

Denote by $\mu^j_t$ the distribution $\mu_t$ conditioned on the fact that the coordinate $r$ belongs to $B_j$:
\begin{equation}\label{conditional measure} \mu^j_t(r,z) = \epsilon_N^{-1} \1_{r\in B_j} \mu_t(r,z).\end{equation}
For any $1\leq j \leq \epsilon_N^{-1}$, the variables $(\widetilde Y_0^i)_{i\in NB_j}$ (and therefore $(\bar Y_0^i)_{i\in NB_j}$ as well) are then taken independent and with distribution $\mu^j_0$. In particular, $\tilde r_0^i$ is a uniformly distributed over $B_j$ for any $i\in NB_j$.

Notice that, in a given box $B_j$, the spatial parameters $(\widetilde r^i)_{i\in NB_j}$ are not ordered as their respective indices $i\in NB_j$ in general. Yet, with this construction, we can always localize $\widetilde r^i$ and get the uniform estimate
\begin{equation}\label{uniform bound axial parameters} |i/N -\widetilde r^i| \leq \epsilon_N. \end{equation}
This estimate, together with energy bounds involving $\nabla W$, will help compensating the fact we do not have the uniform Lipschitz property for $\Phi_\ell(r)\nabla W(x)$. The idea of doing couplings over mesoscopic boxes $(B_j)_{1\leq j \leq \epsilon_N^{-1}}$ will appear several times during the proof due to the local mean field structure of our problem.

\begin{center}
\begin{figure}[h]
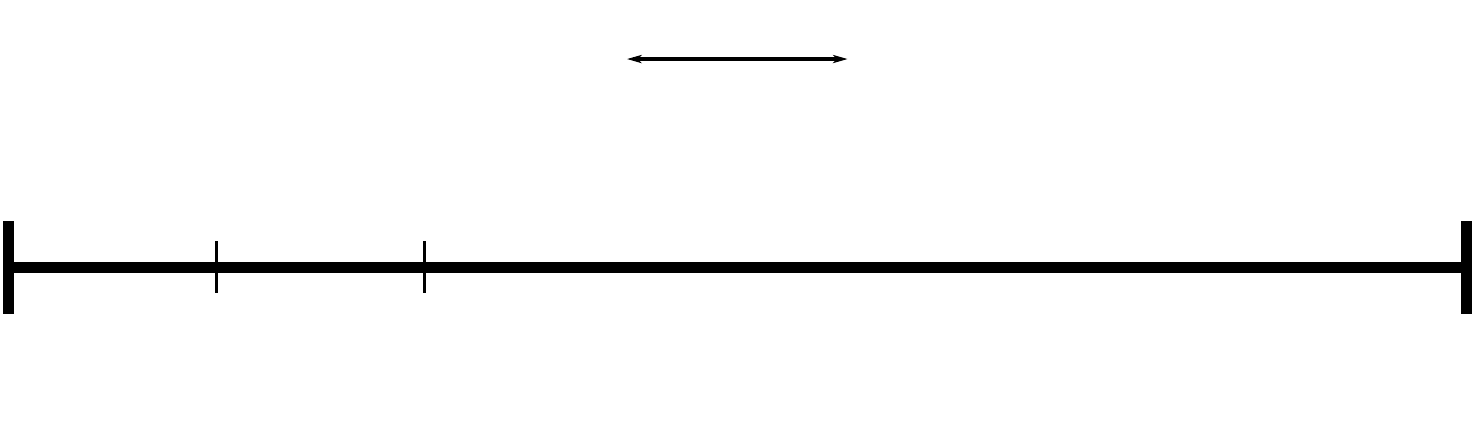
\caption{Boxes $B_j$ of size $\epsilon_N$. For $i\in NB_j$, $\widetilde r^i_0$ is a uniform random variable in $B_j$}
\end{figure}
\end{center}

Before turning to the proof of Theorem \ref{Convergence result}, let us state a series of lemmas that we will use and whose proofs are postponed to the end of the section. In the whole proof and in the lemmas, $K$, $K'$ are constants that change from one line to another that depend on $U$, $W$, $\phi$, $\gamma$, but not on $N$, $\ell$, $\bar \gamma$ since we want to track the dependence of our convergence rates on these terms. Define for any $1\leq i \leq N$ the probability density on $\T$
\begin{equation}\label{measure sigma} \sigma^i(r) = N \sum_{k=-\ell N}^{\ell N}\gamma_{k} \mathbbm{1}_{r\in \Lambda^{i+k}},\end{equation}
from which $\calN^i$ selects a neighbor of particle $i$ to exchange velocities with, see \eqref{intensity practical Poisson}. Define also the weighted distribution 
\begin{equation} \label{weighted distribution} w^{\mu_t,\widetilde r^i_0}(v') = \int_{\T\times\R^d}\Gamma_\ell(\widetilde r^i_0-r')d\mu_t(r',x',v'),\end{equation}
which is the velocity marginal of $\mu_t$, modulated by the function $\Gamma_\ell$ around the axial parameter $\widetilde r^i_0$. This is the distribution we expect for the updates of velocity of the nonlinear processes with axial parameter $\widetilde r^i_0$. In the following lemma, adapted from \cite{Cortez}, we construct the map $\Pi^i$ introduced in \eqref{nonlinear process definition}.

\begin{Lem}\label{Optimal Coupling solid}
Let $1\leq j \leq \epsilon_N^{-1}$ and $i \in NB_j$. Then for any $N$, there exists a measurable mapping
$$\Pi^i: \R_+\times  \T \times B_j \times (\R^d)^{2\ell N+1} \to \R^d,$$
such that, for any $\widetilde r \in B_j$ and any $\bold v \in (\R^d)^{2\ell N +1}$, if $r$ is drawn from \eqref{measure sigma}, then $\Pi^i_t(r,\widetilde r, \bold v)$ has distribution $w^{\mu_t,\widetilde r}$ (see \eqref{weighted distribution}). Moreover, for any coupling $\pi_t$ of $\mu^N_t$ and $\mu_t$, the following bounds hold
\begin{align} \label{bound optimal coupling}
&\frac{1}{N}\sum_{i=1}^N \E\left[ \int_\T | V_t^{[ Nr ]} - \Pi^i_t(r,\widetilde r^i_0,\bold V^i_{t}) | \sigma^i(r) dr \right] \leq K \E\left[ \int_{(\T\times E)^2} | v' - v'' | d \pi_t(y',y'') \right] \nonumber \\
&\hspace{4cm}+ \frac{K}{\ell^2} \E\left[\int_{(\T\times E)^2}|v''| \1_{\{ |\widetilde r_0^i -r''  |<\ell/2\}} \left | r' - r''\right |d\pi_t(y',y'')\right] + \frac{K\epsilon_N}{\ell},
\end{align}\\
and also
\begin{equation} \label{bound optimal coupling 2} \frac{1}{N} \sum_{j=1}^{\epsilon_N^{-1}} \sum_{i\in NB_j} \E\left[\int_{B_j}\left | \Pi^i_t(r,\widetilde r^i_0,\bold V^i_{t}) \right | \sigma^i(r) dr\right] \leq K\frac{\epsilon_N^{1/2}}{\ell^{1/2}}. \end{equation}
\end{Lem}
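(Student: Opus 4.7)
The plan is to adapt the Cortez--Fontbona coupling construction for the Kac model~\cite{Cortez} to our spatially inhomogeneous setting, where the target update distribution $w^{\mu_t,\widetilde r^i_0}$ depends on the axial parameter through the long-range weight $\Gamma_\ell$. First, I would disintegrate the given coupling as $\pi_t = \frac{1}{N}\sum_{j=1}^N \delta_{Y^j_t}\otimes q^j_t$, where $q^j_t$ is the regular conditional of $y''$ given $y'=Y^j_t$ under $\pi_t$. I then build $\Pi^i_t$ in two stages: given $r\sim\sigma^i$, identify $j=[Nr]$ and sample $y''=(r'',x'',v'')\sim q^j_t$ with auxiliary randomness; then apply a maximal coupling correction in the velocity coordinate to adjust the resulting marginal exactly to $w^{\mu_t,\widetilde r^i_0}$, either accepting $\Pi^i_t=v''$ or resampling from the residual defect distribution. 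Measurability of $\Pi^i$ in $(r,\widetilde r,\mathbf v)$ follows from a standard measurable selection argument applied to the parametrized family of correction distributions.

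For \eqref{bound optimal coupling}, I would decompose the cost according to the accept/reject outcome. The accepted branch contributes $|V^{[Nr]}_t-v''|$, which after integrating against $\sigma^i(r)\,dr$ and averaging over $i$ telescopes (using $\sum_{i=1}^N\gamma_{j-i}=\sum_k\gamma_k=1$) into exactly $\E\int|v'-v''|\,d\pi_t(y',y'')$, giving the main term. For the rejected branch I would rely on the weighted Kantorovich--Rubinstein-type bound
\[
\calW_1\bigl(\tilde p,\,w^{\mu_t,\widetilde r^i_0}\bigr) \;\leq\; \int_{\R^d}|v|\,\bigl|\tilde p(v)-w^{\mu_t,\widetilde r^i_0}(v)\bigr|\,dv,
\]
where $\tilde p$ is the $v''$-marginal of the output of the first stage. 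The integrand is then bounded using Lipschitz continuity of $\Gamma_\ell$ (constant of order $1/\ell^2$) applied to the three natural discrepancies: (i) the discretization $\sigma^i(r)\,dr$ vs.\ $\Gamma_\ell(r^i-r)\,dr$, giving an $L^1$ error of order $1/(N\ell)\le\epsilon_N/\ell$; (ii) the axial shift $|r^i-\widetilde r^i_0|\le\epsilon_N$, also of order $\epsilon_N/\ell$; and (iii) the position mismatch $|r^j-r''|$ in the coupling $\pi_t$, which, after absorbing the $|v|$ weight and restricting to the support of $\Gamma_\ell$, yields precisely the middle term $(K/\ell^2)|v''|\1_{|\widetilde r_0^i-r''|<\ell/2}|r'-r''|$ of \eqref{bound optimal coupling}.

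For \eqref{bound optimal coupling 2}, Cauchy--Schwarz inside the integral gives
\[
\int_{B_j}|\Pi^i_t(r,\ldots)|\,\sigma^i(r)\,dr \;\leq\; \Bigl(\int_{B_j}|\Pi^i_t|^2\sigma^i\,dr\Bigr)^{1/2}\Bigl(\int_{B_j}\sigma^i\,dr\Bigr)^{1/2}.
\]
After taking expectation, the first factor is controlled by the second moment $\E|\Pi^i_t|^2\le\|\Gamma_\ell\|_\infty\int|v|^2\,d\mu_t\le K/\ell$ (using that the marginal of $\Pi^i_t$ is $w^{\mu_t,\widetilde r^i_0}$ together with the energy bound~\ref{H7}), while the second factor is bounded by $\|\sigma^i\|_\infty\epsilon_N\le K\epsilon_N/\ell$. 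A further Cauchy--Schwarz summation over $i\in NB_j$ and over the $\epsilon_N^{-1}$ boxes then yields the announced bound. The main obstacle throughout is the second stage of the construction: producing an output with exact marginal $w^{\mu_t,\widetilde r^i_0}$ while keeping the coupling cost expressible in terms of $\pi_t$ and the mesoscopic parameter $\epsilon_N$; it is precisely the weighted Kantorovich--Rubinstein bound combined with the $1/\ell^2$ Lipschitz constant of $\Gamma_\ell$ that produces the sharp $|v''|$-weighted middle term rather than a cruder total-variation-times-moment estimate.
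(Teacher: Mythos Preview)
Your construction of $\Pi^i$ does not match the signature required by the lemma, and this is a genuine gap rather than a cosmetic issue. You build $\Pi^i_t$ by disintegrating a \emph{given} coupling $\pi_t=\frac1N\sum_j\delta_{Y^j_t}\otimes q^j_t$ and then sampling $y''\sim q^{[Nr]}_t$ with auxiliary randomness. But the lemma asks for a measurable map $\Pi^i:\R_+\times\T\times B_j\times(\R^d)^{2\ell N+1}\to\R^d$, i.e.\ a deterministic function of $(t,r,\widetilde r,\bold v)$ alone, and then asserts that the bound \eqref{bound optimal coupling} holds for \emph{every} coupling $\pi_t$. Your $\Pi^i$ depends on $\pi_t$ (through $q^j_t$), on the full particle configuration $(Y^j_t)_j$ (not just the $2\ell N+1$ neighbouring velocities), and on extra randomness not present in the domain. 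The paper avoids all three problems at once: it takes the optimal $\calW_1$ coupling $p^i_{t,\widetilde r,\bold v}$ between the weighted empirical velocity law $w^{\bold v}=\sum_k\gamma_k\delta_{v^{i+k}}$ and $w^{\mu_t,\widetilde r}$, disintegrates it with respect to its first marginal, and then uses the \emph{residual uniformity of $r$ on each subinterval $\Lambda^{i+k}$} as the only source of randomness. This yields a deterministic $\Pi^i(t,r,\widetilde r,\bold v)$ with the crucial identity $\int_\T |v^{[Nr]}-\Pi^i_t(r)|\,\sigma^i(r)\,dr=\calW_1(w^{\bold v},w^{\mu_t,\widetilde r})$. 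The bound \eqref{bound optimal coupling} then follows by Kantorovich--Rubinstein duality applied to this Wasserstein distance, where the test function is estimated via cross terms against an \emph{arbitrary} $\pi_t$; this is why the same $\Pi^i$ works for every coupling.

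Your argument for \eqref{bound optimal coupling 2} is close in spirit but, as written, loses a factor $\ell^{-1/2}$. After your Cauchy--Schwarz you bound the second-moment factor by $\|\Gamma_\ell\|_\infty\int|v|^2\,d\mu_t\le K/\ell$ uniformly in $i$, which combined with $\int_{B_j}\sigma^i\le K\epsilon_N/\ell$ yields $K\epsilon_N^{1/2}/\ell$ rather than $K(\epsilon_N/\ell)^{1/2}$. The paper instead writes $\bigl(\int|v|^2\,dw^{\mu_t,\widetilde r^i_0}\bigr)^{1/2}\le 1+\tfrac12\int\Gamma_\ell(\widetilde r^i_0-r')|v'|^2\,d\mu_t$ and only \emph{then} averages over $i$ and takes expectation in $\widetilde r^i_0$ (uniform on $B_j$), so that $\Gamma_\ell$ integrates to $1$ and the moment factor becomes an $\ell$-independent constant. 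Your ``further Cauchy--Schwarz summation'' does not recover this; the point is to linearize the square root before averaging.
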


From \eqref{bound optimal coupling}, we can control the difference of the velocities' updates between the original system and the nonlinear processes. \eqref{bound optimal coupling 2} is a useful bound that we will need. The next lemma shows the marginal distribution at time $t$ of the nonlinear process $\widetilde Y^i$ for $i\in NB_j$ defined in \eqref{nonlinear process definition} is indeed $\mu^j_t$, where $\mu^j_t$ is defined in \eqref{conditional measure}.

\begin{Lem} \label{Nonlinear process construction}
Let $1\leq j \leq \epsilon_N^{-1}$ and $i\in NB_j$. The process $(\widetilde Y^i_t)_{t\geq 0}$ satisfying \eqref{nonlinear process definition} and such that $\widetilde Y^i_0$ has distribution $\mu^j_0$ is well defined and has distribution $\mu^j_t$.
\end{Lem}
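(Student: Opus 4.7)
The plan is to reduce the statement to a uniqueness result for the \emph{linear} time-inhomogeneous martingale problem with generator $\mathcal L[\mu_s]$, treating the solution $\mu$ of the nonlinear problem \eqref{nonlinear mean field martingale problem} as a given input. For well-posedness of \eqref{nonlinear process definition}, I would observe that once the axial parameter $\widetilde r^i_0$ is frozen and $\mu$ is given, the drift is globally Lipschitz in $(\widetilde X^i,\widetilde V^i)$ by hypothesis \ref{H1}, the total jump rate of $\calN^i$ is bounded by $\bar\gamma$ uniformly, and the map $\Pi^i$ furnished by Lemma \ref{Optimal Coupling solid} is measurable. Existence and pathwise uniqueness then follow from standard SDE theory with Poissonian jumps.

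Next, I would show that $\widetilde Y^i$ solves the linear martingale problem with generator $\mathcal L[\mu_s]$ and initial distribution $\mu^j_0$. For $\psi\in C^1_b(\T\times E)$, Itô's formula applied to $\psi(\widetilde Y^i_t)$ combined with compensation of the jumps of $\calN^i$ produces a drift contribution $\int_0^t \mathcal A[\mu_s]\psi(\widetilde Y^i_s)\,ds$ (with $r$ frozen at $\widetilde r^i_0$), while the jump compensator equals
$$\int_0^t \bar\gamma \int_\T \bigl[\psi\bigl(\widetilde r^i_0, \widetilde X^i_s, \Pi^i_s(r', \widetilde r^i_0, \mathbf V^i_s)\bigr) - \psi(\widetilde Y^i_s)\bigr]\, \sigma^i(r')\, dr'\, ds.$$
The decisive step uses the first part of Lemma \ref{Optimal Coupling solid}: for every realization of $\mathbf V^i_s$, if $r'$ is drawn from $\sigma^i$ then $\Pi^i_s(r',\widetilde r^i_0, \mathbf V^i_s)$ has distribution $w^{\mu_s,\widetilde r^i_0}$. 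The inner integral therefore reduces to $\bar\gamma \mathcal S[\mu_s]\psi(\widetilde Y^i_s)$, so that
$$\psi(\widetilde Y^i_t) - \psi(\widetilde Y^i_0) - \int_0^t \mathcal L[\mu_s]\psi(\widetilde Y^i_s)\,ds$$
is a local martingale. Here one uses that $\mu_s$ has uniform $r$-marginal at every time $s$ (because $r_t\equiv r_0$ under $\mu$), which ensures $\|\sigma^i\|_1=1$ and $\int w^{\mu_s,\widetilde r^i_0}(v')\,dv'=1$ match.

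Since the coefficients of this linear martingale problem are Lipschitz in $(x,v)$ and the jump mechanism has bounded, state-independent rate, uniqueness starting from any initial law is standard. By construction, the canonical process under $\mu$ itself satisfies the same linear martingale problem with initial law $\mu_0$. Disintegrating along the initial value, uniqueness gives
$$\mathrm{Law}\bigl(\widetilde Y^i \mid \widetilde Y^i_0 = (r,z)\bigr) = \mathrm{Law}\bigl(Y \mid Y_0 = (r,z)\bigr)$$
for $\mu_0$-a.e.\ $(r,z)$, hence in particular for $\mu^j_0$-a.e.\ $(r,z)$ since $\mu^j_0 = \epsilon_N^{-1}\mathbf 1_{B_j}(r)\mu_0 \ll \mu_0$. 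Integrating against $\mu^j_0$ then yields $\mathrm{Law}(\widetilde Y^i_t) = \epsilon_N^{-1}\mathbf 1_{B_j}(r)\mu_t = \mu^j_t$, which is the claim. The main subtlety is the compensator computation: the driver $\calN^i$ couples $\widetilde Y^i$ to the whole original system through $\mathbf V^i_s$, and one must check that the prescribed construction of $\Pi^i$ in Lemma \ref{Optimal Coupling solid} averages this dependence out at the level of the marginal law, which is exactly the purpose of that coupling.
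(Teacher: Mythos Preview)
Your proposal is correct, and the first part—well-posedness of \eqref{nonlinear process definition}, the It\^o computation, and the use of Lemma \ref{Optimal Coupling solid} to identify the jump compensator with $\bar\gamma\,\mathcal S[\mu_s]\psi(\widetilde Y^i_s)$—matches the paper's proof exactly.

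The route diverges only in the final identification step. The paper does not disintegrate along the starting point: instead, given any family $(\hat Y^j)_{j}$ of solutions to the linear martingale problem starting from the $\mu_0^j$, it builds the mixture $Y = \sum_j \hat Y^j \mathbf 1_{U\in B_j}$ with $U$ uniform on $\T$ and independent, checks that the law of $Y$ solves the linear martingale problem starting from $\mu_0$, invokes the uniqueness statement of Proposition \ref{linear mean field martingale problem proposition} (which is formulated only for initial laws with uniform $r$-marginal) to conclude $\mathrm{Law}(Y)=\mu$, and then reads off $\mathrm{Law}(\hat Y^j)=\mu^j$ by conditioning on $\{U\in B_j\}$. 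Your disintegration argument is cleaner in spirit but leans on two facts the paper does not spell out: uniqueness of the linear problem from arbitrary initial laws (in particular from $\mu^j_0$ or from Dirac masses, whose $r$-marginal is not Lebesgue), and the standard Stroock--Varadhan-type statement that for a well-posed martingale problem the regular conditional law given $Y_0=y$ is the unique solution from $\delta_y$. Both are routine here given the Lipschitz drift and bounded state-independent jump rate, so your argument is sound; the paper's reassembly trick simply sidesteps them by only ever invoking uniqueness at the level of $\mu_0$ itself.
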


The next lemma shows that we can choose a good coupling of the initial conditions of the original system $Y^i_0$ and of the nonlinear processes $\widetilde Y^i_0$.

\begin{Lem} \label{Initial positions}
There exists a coupling of $(Y^i_0)_{i\leq N}$ and $(\widetilde Y^i_0)_{i\leq N}$ such that, for any $1\leq i \leq N$,
$$\E\left[|Y^i_0 - \widetilde Y^i_0|\right] \leq K \epsilon_N,$$
for some constant $K$.
\end{Lem}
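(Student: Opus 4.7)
The strategy is to build the coupling particle by particle, taking the $N$ resulting pairs $(Y^i_0,\widetilde Y^i_0)$ to be mutually independent. This is consistent with both marginals: $(Y^i_0)_{i\leq N}$ are independent by assumption, and within each box $B_j$ the variables $(\widetilde Y^i_0)_{i\in NB_j}$ are independent by construction of the nonlinear processes (and obviously independent across boxes). Thus it suffices to bound $\E[|Y^i_0-\widetilde Y^i_0|]$ for a single $i\in NB_j$.

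\paragraph{Axial coordinate.} Because $r^i_0 = i/N \in B_j$ and $\widetilde r^i_0$ is required to be uniform on $B_j$, both axial parameters lie in the same interval of length $\epsilon_N$, so the cost $|r^i_0-\widetilde r^i_0|\leq \epsilon_N$ is automatic under any joint law. The nontrivial part is coupling the phase-space coordinates. I would first draw $\widetilde r^i_0$ uniformly on $B_j$ and then, conditionally on $\widetilde r^i_0=r$, use an optimal $\calW_1$-transport plan on $E$ between the densities $f_0(i/N,\cdot)$ (the law of $(X^i_0,V^i_0)$) and $f_0(r,\cdot)$ (the conditional law of $(\widetilde X^i_0,\widetilde V^i_0)$). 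A standard measurable selection theorem for parametrized optimal transport guarantees that such a coupling can be chosen to depend measurably on $r$, so that integrating against the uniform law of $\widetilde r^i_0$ defines a bona fide joint distribution on $(\T\times E)^2$.

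\paragraph{Key estimate.} The whole point is to convert the $r$-regularity of \ref{H5} into a $\calW_1$ bound. Since $f_0(r_1,\cdot)$ and $f_0(r_2,\cdot)$ are both probability densities, the signed measure of density $f_0(r_1,z)-f_0(r_2,z)$ has total mass zero. Consequently, for any $1$-Lipschitz $g:E\to\R$ one can subtract the constant $g(0)$ without changing the integral, which yields by Kantorovich-Rubinstein duality
\[
\calW_1\bigl(f_0(r_1,\cdot),f_0(r_2,\cdot)\bigr) \leq \int_E |z|\,|f_0(r_1,z)-f_0(r_2,z)|\,dz.
\]
Applying \ref{H5} and the finite first moment of $h$ gives
\[
\calW_1\bigl(f_0(r_1,\cdot),f_0(r_2,\cdot)\bigr) \leq C|r_1-r_2|\int_E |z| h(z)\,dz \leq K'|r_1-r_2|.
\]
Specializing to $r_1=i/N$ and $r_2=\widetilde r^i_0$ and using $|i/N - \widetilde r^i_0|\leq \epsilon_N$ bounds the cost of the conditional transport plan by $K'\epsilon_N$.

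\paragraph{Conclusion and obstacle.} Combining the two contributions, $\E[|Y^i_0-\widetilde Y^i_0|]\leq \epsilon_N + K'\epsilon_N \leq K\epsilon_N$, as required. The lemma is essentially a direct translation of hypothesis \ref{H5} into a coupling estimate, and there is no genuine obstacle: the only technical care needed is the measurable selection of the $\calW_1$-optimal couplings as $r$ varies over $B_j$, which is classical (see, e.g., Villani's monograph on optimal transport), and the small duality trick exploiting that $f_0(r,\cdot)$ is a probability density so that the differences integrate to zero.
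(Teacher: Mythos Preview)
Your proof is correct and follows essentially the same strategy as the paper: bound the axial cost trivially by $\epsilon_N$ and use Kantorovich--Rubinstein duality together with hypothesis \ref{H5} to control the $\calW_1$ distance on $E$. The only minor variation is that you couple $Z^i_0$ with $\widetilde Z^i_0$ \emph{conditionally} on $\widetilde r^i_0=r$ (against $f_0(r,\cdot)$) and then average, whereas the paper couples directly against the averaged density $\epsilon_N^{-1}\int_{B_j}f_0(r,\cdot)\,dr$; your construction is slightly cleaner in that it manifestly yields the correct joint law $\mu^j_0$ for $\widetilde Y^i_0$, but the resulting bound and the key estimate are identical.
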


This last lemma gives uniform in time moment bounds by conservation type arguments.

\begin{Lem}\label{bound speed}
The solution $\mu$ of the nonlinear martingale problem \eqref{nonlinear mean field martingale problem} satisfies
$$\sup_{t\geq 0} \int_{\T\times E} |z|^2 d\mu_t(r,z) < \infty.$$
For the particle system, the following bound holds
$$\sup_{N\geq 1} \sup_{t\geq 0} \frac{1}{N}\sum_{i=1}^N \E\left[|V^i_t|^2\right] < \infty.$$
\end{Lem}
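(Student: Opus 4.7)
The plan is to exploit conservation of total energy. Introduce the energy functional
$$E(\mu) := \int_{\T\times E}\Bigl(\tfrac{1}{2}|v|^2 + U(x)\Bigr)d\mu(r,z) + \frac{1}{2}\iint \Phi_\ell(r-r')W(x-x')d\mu(r,z)d\mu(r',z').$$
By \ref{H1} we have $U\ge 0$ and $W\ge 0$ (via $0\le c^{-1}|\nabla W|^2\le W$), and $\phi\ge 0$ by \ref{H2}, so every piece of $E(\mu)$ is non-negative. In particular, combining $|x|^2\le cU(x)$ with non-negativity of the interaction term yields $\int|z|^2 d\mu \le (2+c)E(\mu)$, and $E(\mu_0)<\infty$ by \ref{H7}.

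For the nonlinear martingale problem, I argue that $E(\mu_t)=E(\mu_0)$ for all $t$. To apply the weak formulation \eqref{nonlinear mean field martingale problem} I use smooth $C^1_b$-truncations $\psi_M$ of $\tfrac{1}{2}|v|^2+U(x)$, together with a bilinear extension tested against $\mu_t\otimes\mu_t$ for the interaction energy $\Phi_\ell(r-r')W(x-x')$. The jump operator $\mathcal{S}[\mu_t]$ applied to $\tfrac{1}{2}|v|^2$ integrates to zero after symmetrization $(r,z)\leftrightarrow (r',z')$, since $\Gamma_\ell$ is even. The drift $\mathcal{A}[\mu_t](\tfrac{1}{2}|v|^2+U(x))$ reduces to $-v\cdot\int\Phi_\ell(r-r')\nabla W(x-x')d\mu_t(r',z')$; its $\mu_t$-integral is exactly compensated by $\tfrac{1}{2}\frac{d}{dt}\iint\Phi_\ell(r-r')W(x-x')d\mu_t d\mu_t$, which (with $\mathcal{S}$ acting trivially on the velocity-independent integrand) equals $\iint v\cdot\Phi_\ell(r-r')\nabla W(x-x')d\mu_t d\mu_t$ after using evenness of $\Phi_\ell$ and oddness of $\nabla W$. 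Monotone convergence in $M$ preserves the identity $E(\mu_t)=E(\mu_0)$ and gives the first bound.

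For the particle system, the Hamiltonian
$$\mathcal{H}_t = \sum_{i=1}^N\Bigl(\tfrac{1}{2}|V^i_t|^2 + U(X^i_t)\Bigr) + \tfrac{1}{2}\sum_{i,k}\phi_k W(X^i_t-X^{i+k}_t)$$
is almost surely conserved: between jumps the evolution is Hamiltonian, and each jump swaps the velocities of two particles, preserving the total kinetic energy. Hence $\frac{1}{N}\sum_i \E[|V^i_t|^2]\le \frac{2}{N}\E[\mathcal{H}_0]$, and it remains to bound the right-hand side uniformly in $N$. Using independence of the $Z_0^i$ with densities $f_0(i/N,\cdot)$, the quantity $\frac{1}{N}\E[\mathcal{H}_0]$ is a Riemann-type sum approximating $E(\mu_0)$ from \ref{H7}. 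The error is controlled via \ref{H5}: for $r\in\Lambda^i$, $|f_0(i/N,z)-f_0(r,z)|\le (C/N)h(z)$, so after truncating the energy density, passing to a monotone limit, and using that $\int |z|h(z)dz<\infty$, the discrepancy is absorbed and $\sup_N \frac{1}{N}\E[\mathcal{H}_0]<\infty$.

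The main technical point is the truncation-to-limit step in the first part: one must verify that the cancellation between the drift and the bilinear contributions survives replacing the unbounded energy density by bounded $\psi_M$. The crucial structural fact enabling this is non-negativity of every term in $E(\mu)$, which turns the passage to the limit into a monotone-convergence argument and bypasses the need for a priori moment bounds on $\mu_t$.
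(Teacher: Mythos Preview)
Your approach is the paper's: both reduce to conservation of the total energy $E(\mu_t)=\int\calE[\mu_t](y)\,d\mu_t(y)$ for the nonlinear problem and almost-sure conservation of $\mathcal{H}_t$ for the particle system, then invoke \ref{H1} and \ref{H7}. The paper's proof is in fact terser than yours---it simply asserts the conservation identities without any truncation argument---so your sketch supplies more detail than the original.

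One caution on the second part. Your Riemann-sum control of $\frac{1}{N}\E[\mathcal H_0]$ via \ref{H5} produces, after truncating the energy density at level $M$, an error of order $\frac{1}{N}\int g_M(z)h(z)\,dz$; under the sole hypothesis $\int|z|h(z)\,dz<\infty$ this quantity may diverge as $M\to\infty$, so the monotone-convergence step does not close as written. (What one really needs is that $r\mapsto\int(\tfrac12|v|^2+U(x))f_0(r,z)\,dz$ is bounded on $\T$, which \ref{H5} alone does not quite give without a second-moment assumption on $h$.) The paper's proof does not address this point either, so your proposal is at least as complete as the original.
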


\subsection{Proof of Theorem \ref{Convergence result}}

\begin{proof}
Recall that in \eqref{goal bound}, we got
$$\E\left[\calW_1(\mu^N_t,\mu_t)\right] \leq \E\left[\calW_1(\mu^N_t,\widetilde\mu^N_t)\right] + \E\left[\calW_1(\widetilde\mu^N_t, \mu_t)\right]$$
by triangular inequality. It suffices to prove the convergence of the two terms on the right-hand side to conclude. Following the terminology in \cite{Cortez}, we will call the first step that consists in bounding $\E[\calW_1(\mu^N_t,\widetilde\mu^N_t)]$ the \textit{coupling} and the second step that consists in bounding $\E[\calW_1(\widetilde\mu^N_t,\mu_t)]$ the \textit{decoupling}.\\

\textit{Step 1: Coupling} \\ 

First, as the two measures $\mu^N_t$ and $\widetilde \mu^N_t$ are atomic, it is easy to bound
$$\calW_1(\mu^N_t,\widetilde\mu^N_t) \leq \frac{1}{N}\sum_{i=1}^N |Y^i_t - \widetilde Y^i_t |,$$
and we are led to bound the expectation of the term on the right-hand side. By the evolution equations \eqref{original system solid} for $(Y^i)_{1\leq i \leq N}$ and \eqref{nonlinear process definition} for $(\widetilde Y^i)_{1\leq i \leq N}$ and the Lipschitz property for $\nabla U$, we can bound:
\begin{align} \label{evolution bound}
\frac{1}{N}\sum_{i=1}^N \E\left[ |Y^i_t - \widetilde Y^i_t |\right ] &\leq \frac{1}{N}\sum_{i=1}^N \E\left[ | Y^i_0 - \widetilde Y^i_0 | \right]+ \int_0^t  \frac{1}{N}\sum_{i=1}^N \E\left[|Y^i_s - \widetilde Y^i_s| \right] ds \nonumber \\
     &\quad\quad + \int_0^t ds \frac{1}{N}\sum_{i=1}^N \E \Bigg[\Bigg | \int_{\T\times E} \Phi_\ell\left(\frac{i}{N} - r'\right) \nabla W(X^i_s - x') d\mu^N_s(r',z') \nonumber \\
             &\quad\quad\quad\quad\quad\quad\quad\quad\quad\quad\quad\quad- \int_{\T\times E} \Phi_\ell(\widetilde r^i_0 - r'') \nabla W(\widetilde X^i_s - x'') d\mu_s(r'',z'') \Bigg | \Bigg] \nonumber \\
     &\quad\quad + \frac{1}{N}\sum_{i=1}^N \E \left[  \int_0^t\int_{\T} |V^{[ Nr ]} - \Pi^i_s(r) - \left(V^i_{s_-} - \widetilde V^i_{s_-} \right)| d\calN^i(s,r) \right].
\end{align}
Denoting by $\pi_s$ a coupling between $\mu^N_s$ and $\mu_s$, the force term involving $\nabla W$ in \eqref{evolution bound}, can be bounded by:
\begin{align*}
&\frac{1}{N}\sum_{i=1}^N\E\left[\int_{(\T\times E)^2} \left | \Phi_\ell\left(\frac{i}{N} - r'\right) \nabla W(X^i_s - x') - \Phi_\ell(\widetilde r^i_0 - r'') \nabla W(\widetilde X^i_s - x'') \right | d\pi_s(y',y'')  \right] \\
   &\quad\quad \leq \frac{1}{N}\sum_{i=1}^N \E\left[ \int_{\T\times E} \Phi_\ell\left(\frac{i}{N} - r'\right) \left | \nabla W(X^i_s - x') - \nabla W(\widetilde X^i_s - x') \right |  d\mu^N_s(y') \right] \\
      &\quad\quad\quad + \frac{1}{N}\sum_{i=1}^N \E\left[\int_{(\T\times E)^2} \Phi_\ell\left(\frac{i}{N} - r'\right)\left |  \nabla W(\widetilde X^i_s - x')  -  \nabla W(\widetilde X^i_s - x'') \right |d\pi_s(y',y'') \right] \\
      &\quad\quad\quad +\frac{1}{N}\sum_{i=1}^N \E\left[\int_{(\T\times E)^2} \left | \Phi_\ell\left(\frac{i}{N} - r'\right) - \Phi_\ell\left(\frac{i}{N} - r''\right) \right |\left | \nabla W(\widetilde X^i_s - x'') \right |d\pi_s(y',y'') \right] \\
       &\quad\quad\quad+\frac{1}{N}\sum_{i=1}^N \E\left[\int_{(\T\times E)^2} \left | \Phi_\ell\left(\frac{i}{N} - r''\right) - \Phi_\ell(\widetilde r^i_0 - r'') \right |\left | \nabla W(\widetilde X^i_s - x'') \right |d\mu_s(y'') \right]\\
       &\quad\quad \leq A_1 + A_2 + A_3 + A_4,
\end{align*}
by introducing the four cross terms in the second line and denoting by $A_k$ the term numbered $1\leq k\leq 4$ at the last line. By the Lipschitz property for $\nabla W$, the first term is bounded by
\begin{equation}\label{term 1} A_1 \leq K \frac{1}{N}\sum_{i=1}^N \sum_{k=-\ell N}^{\ell N} \Phi_\ell\left(\frac{i-k}{N}\right) \E\left[ \left | X^i_s- \widetilde X^i_s \right | \right] \leq K'  \frac{1}{N}\sum_{i=1}^N \E\left[ \left | X^i_s- \widetilde X^i_s \right | \right], \end{equation}
since $\sum_k\Phi_\ell(k/N)$ is bounded. Similarly, the second term is bounded by 
\begin{equation}\label{term 2} A_2 \leq K'  \E\left[\int_{(\T\times E)^2} \left | x'   - x'' \right |d\pi_s(y',y'') \right]. \end{equation}
It remains to choose a good coupling to bound this term. The fourth term is also relatively easy to bound by the Lipschitz property for $\Phi_\ell$, which has Lipschitz constant equal to $Lip(\phi)\ell^{-2}$:
\begin{align}\label{term 4}
A_4&\leq \frac{K}{\ell^2}\frac{1}{N}\sum_{i=1}^N \E\left[\int_{(\T\times E)^2} \left | \frac{i}{N}  - \widetilde r^i_0  \right | \1_{| \widetilde r^i_0-r''|<\ell/2} \left | \nabla W(\widetilde X^i_s - x'') \right |d\mu_s(y'') \right] \nonumber \\
      &\leq \frac{K\epsilon_N}{\ell^2}\frac{1}{N}\sum_{i=1}^N \E\left[\int_{(\T\times E)^2}  \1_{| \widetilde r^i_0-r''|<\ell/2} \left | \nabla W(\widetilde X^i_s - x'') \right |d\mu_s(y'') \right] \nonumber \\
      &\leq \frac{K'\epsilon_N}{\ell}.
\end{align}
We used at the first line that $\Phi_\ell$ has support included in $[-\ell/2, \ell/2]$ to introduce the indicator function, and we used at second line the uniform bound \eqref{uniform bound axial parameters}. For the last line, we used the following lemma. 

\begin{Lem}\label{Lemma bound potential energy}
Under hypothesis \ref{H1} on $W$, the following uniform in time bound holds
$$\sup_{0<\ell <1} \sup_{N\geq 1}\sup_{t\geq 0} \frac{1}{N}\sum_{i=1}^N \E\left[\int_{\T\times E} \frac{1}{\ell} \1_{| \widetilde r^i_0-r''|<\ell/2} \left | \nabla W(\widetilde X^i_t - x'') \right |d\mu_t(y'') \right] <\infty.$$
\end{Lem}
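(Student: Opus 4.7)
The plan is to use Lemma \ref{Nonlinear process construction} to identify the law of each $\widetilde Y^i_t$, reduce the averaged sum to a double integral against $d\mu_t\otimes d\mu_t$, and then exploit the linear growth of $\nabla W$ together with the preservation in time of the uniform $r$-marginal of $\mu_t$.

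First, by Lemma \ref{Nonlinear process construction}, for every $i\in NB_j$ the variable $\widetilde Y^i_t = (\widetilde r^i_0,\widetilde X^i_t,\widetilde V^i_t)$ has law $\mu^j_t$; and since $\mu_t = \sum_j \epsilon_N \mu^j_t$ while $\widetilde r^i_0$ is nothing but the $r$-coordinate of $\widetilde Y^i_t$ (the spatial variable is frozen by the flow, as noted after the definition of the martingale problem \eqref{nonlinear mean field martingale problem}), the empirical average telescopes into
$$\int_{(\T\times E)^2} \frac{1}{\ell}\1_{|r-r'|<\ell/2}\,|\nabla W(x-x')|\,d\mu_t(y)\,d\mu_t(y').$$
Next, I would apply the Lipschitz control $|\nabla W(x-x')|\leq L(|x|+|x'|)$ coming from \ref{H1} together with $\nabla W(0)=0$, and exploit the symmetry in $(y,y')$ to reduce matters to bounding
$$2L\int_{\T\times E}|x|\left(\int_{\T\times E}\frac{1}{\ell}\1_{|r-r'|<\ell/2}\,d\mu_t(r',z')\right)d\mu_t(r,z).$$

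The crucial observation is that the $r$-marginal of $\mu_t$ equals that of $\mu_0$, which is the uniform measure on $\T$ by \ref{H5}. Consequently, for any $\ell<1$ and any $r\in\T$, the inner integral above is exactly $1$, leaving $2L\int|x|\,d\mu_t$. One Cauchy--Schwarz step then gives $\int|x|\,d\mu_t \leq (\int|z|^2\,d\mu_t)^{1/2}$, which is bounded uniformly in $t$, $N$, and $\ell$ by Lemma \ref{bound speed} (the uniformity in $\ell$ coming from energy conservation together with the bound $|x|^2\leq cU(x)$ of \ref{H1}).

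The main conceptual difficulty is that the kernel $\frac{1}{\ell}\1_{|u|<\ell/2}$ concentrates as $\ell\to 0$, so that a naive pointwise comparison would blow up. What rescues the uniformity in $\ell$ is that $\mu_t$ keeps the same uniform $r$-marginal as $\mu_0$: the singular factor is then absorbed by the normalisation $\int_\T \frac{1}{\ell}\1_{|u|<\ell/2}\,du = 1$, which is a direct consequence of the fact that the Vlasov flow does not move the spatial variable.
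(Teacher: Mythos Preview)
Your argument is correct and genuinely different from the paper's. After the common first step (using Lemma \ref{Nonlinear process construction} to collapse the average into the double integral $\int\frac{1}{\ell}\1_{|r-r'|<\ell/2}|\nabla W(x-x')|\,d\mu_t\,d\mu_t$), the paper invokes the second part of \ref{H1}, $|\nabla W|^2\leq cW$, together with Jensen to pass to $\int\frac{1}{\ell}\1_{|r-r'|<\ell/2}W(x-x')\,d\mu_t\,d\mu_t$, and then needs a nontrivial kernel comparison: since $\frac{1}{\ell}\1_{|u|<\ell/2}$ is not pointwise dominated by $\Phi_\ell$, the paper covers the indicator by a finite sum of shifted copies of $\phi$ and uses translation invariance of the $r$-marginal to reduce to the interaction energy, which is conserved. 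You instead use the Lipschitz part of \ref{H1} to get $|\nabla W(x-x')|\leq L(|x|+|x'|)$, decouple the two variables, and kill the singular kernel directly via $\int_{\T}\frac{1}{\ell}\1_{|r-r'|<\ell/2}\,dr'=1$, which holds because the $r$-marginal of $\mu_t$ is uniform (this is \eqref{h2 for solution} in Proposition \ref{linear mean field martingale problem proposition}, applied to the nonlinear solution). What remains is $\int|x|\,d\mu_t$, handled by Lemma \ref{bound speed} and the first inequality $|x|^2\leq cU(x)$ of \ref{H1}. Your route is shorter and avoids the shifted-copies trick entirely; the paper's route has the mild advantage of relating the quantity directly to the conserved interaction energy, which would still work if $|x|^2\leq cU(x)$ failed but $|\nabla W|^2\leq cW$ held. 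Your parenthetical remark on uniformity in $\ell$ is needed and correct: the initial energy in \ref{H7} is $\ell$-uniformly bounded because the $\Phi_\ell$ term is at most $W(0)+2L\int|x|^2\,d\mu_0$, the latter being finite and $\ell$-free by $|x|^2\leq cU(x)$.
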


The proof of Lemma \ref{Lemma bound potential energy} is postponed to the end of the section. Using the same arguments, the third term is bounded by
\begin{equation} \label{term 3} A_3\leq \frac{K}{\ell^2}\frac{1}{N}\sum_{i=1}^N \E\left[\int_{(\T\times E)^2} \left | r' - r'' \right | \1_{| \widetilde r^i_0-r''|<\ell/2}\left | \nabla W(\widetilde X^i_s - x'') \right |d\pi_s(y',y'') \right].
\end{equation}

It just remains to choose an appropriate coupling $\pi_s$ of $\mu^N_s$ and $\mu_s$ to bound \eqref{term 2} and \eqref{term 3}, keeping in mind that we will need to apply Gronwall's lemma at some point. A first option would be to take $\pi_s$ to be the optimal coupling, which would give the best bound for \eqref{term 2}, but this gives no information on the term \eqref{term 3} since $\nabla W$ is unbounded. The local mean field structure of the system we consider actually leads us to introduce again couplings over mesoscopic boxes. More precisely, recall from \eqref{conditional measure} the notation $\mu^j_s$ for the measure $\mu_s$ conditioned on the fact that $r\in B_j$. We similarly define the empirical probability measure on the box $B_j$:
$$\mu^{N,j}_s := \frac{1}{N\epsilon_N} \sum_{i\in NB_j} \delta_{Y^i_s}.$$
Then, denoting by $\pi^j_s$ the optimal coupling between $\mu^{N,j}_s$ and $\mu^j_s$ for $\calW_1$, it is easy to see that
$$\pi_s := \epsilon_N\sum_{j=1}^{\epsilon_N^{-1}} \pi^j_s$$
is a coupling of $\mu^N_s$ and $\mu_s$. In words, this coupling consists in choosing a box $B_j$ uniformly at random over $(B_j)_{1\leq j \leq \epsilon_N^{-1}}$ and then, given that the coordinates $r',r'' \in B_j$, select the optimal coupling $\pi^j$. With this approach, the term \eqref{term 2} can then be bounded by
$$A_2 \leq K'  \E\left[\int_{(\T\times E)^2} \left | x'   - x'' \right |d\pi_s(y',y'') \right] \leq K' \epsilon_N \sum_{j=1}^{\epsilon_N^{-1}} \E\left[ \calW_1(\mu^{N,j}_s,\mu^j_s)\right],$$
and the term \eqref{term 3} is bounded by
\begin{align*}
A_3& \leq K \frac{\epsilon_N}{\ell^2} \frac{1}{N}\sum_{i=1}^N \epsilon_N \sum_{j=1}^{\epsilon_N^{-1}} \E\left[\int_{(\T\times E)^2} \1_{| \widetilde r^i_0-r''|<\ell/2}\left | \nabla W(\widetilde X^i_s - x'') \right |d\mu^j_s(y'') \right] \\
  &\leq K \frac{\epsilon_N}{\ell^2} \frac{1}{N}\sum_{i=1}^N \E\left[\int_{(\T\times E)^2} \1_{| \widetilde r^i_0-r''|<\ell/2}\left | \nabla W(\widetilde X^i_s - x'') \right |d\mu_s(y'') \right] \\
  & \leq K' \frac{\epsilon_N}{\ell},
\end{align*}
by Lemma \ref{Lemma bound potential energy}. All in all, assembling the last two inequalities and the bounds \eqref{term 1} and \eqref{term 4}, the force terms involving $\nabla W$ and $\nabla U$ in \eqref{evolution bound} are bounded by
\begin{equation} \label{bound force terms} K\int_0^t\left( \frac{1}{N}\sum_{i=1}^N \E\left[ \left | X^i_s- \widetilde X^i_s \right | \right]  + \epsilon_N \sum_{j=1}^{\epsilon_N^{-1}} \E\left[ \calW_1(\mu^{N,j}_s,\mu^j_s)\right]  + \frac{\epsilon_N}{\ell} \right)ds.
\end{equation}
Finally, it remains to control the terms coming from the exchanges of velocities. Recall that $\calN^i$ has intensity $\bar\gamma \sigma^i(r)drdt$, with $\sigma^i$ defined in \eqref{measure sigma}. Since $(t,r)\mapsto \Pi^i_t(r)$ is measurable, one can bound
\begin{align*}
B &:= \frac{1}{N}\sum_{i=1}^N \E\left[ \int_0^t |V^{[ Nr ]} - \Pi^i_s(r) - \left(V^i_{s_-} - \widetilde V^i_{s_-} \right)| d\calN^i(s,r)\right] \\
& \hspace{0.8cm} \leq \bar\gamma \frac{1}{N}\sum_{i=1}^N \E\left[ \int_0^t ds\int_{\T} \left|V^{[ Nr ]} - \Pi^i_s(r)\right | \sigma^i(r)dr\right] + \bar\gamma \frac{1}{N}\sum_{i=1}^N \E\left[\int_0^tds\int_{\T} \left |V^i_s - \widetilde V^i_s \right | d\sigma^i(r)dr\right] \\
& \hspace{0.8cm} \leq \bar\gamma \int_0^t  \frac{1}{N}\sum_{i=1}^N \E\left[ \int_{\T} \left|V^{[ Nr ]} - \Pi^i_s(r)\right | \sigma^i(r)dr\right] ds + \bar\gamma \int_0^t \frac{1}{N}\sum_{i=1}^N \E\left[ \left |V^i_s - \widetilde V^i_s \right | \right] ds.
\end{align*}
By \eqref{bound optimal coupling} in Lemma \ref{Optimal Coupling solid}, the first term is bounded by:
\begin{align*}
&K\bar\gamma \int_0^t ds \E\left[ \int_{(\T\times E)^2} | v' - v'' | d \pi_s(y',y'') \right] \\
      &\hspace{2cm}+ K \int_0^t \left(\E\left[\int_{(\T\times E)^2}|v''|\frac{1}{\ell^2} \1_{\{ |\widetilde r_0^i -r''  |<\ell/2\}} \left | r' - r''\right |d\pi_s(y',y'')\right] + \frac{\epsilon_N}{\ell} \right)ds,
\end{align*}
for any coupling $\pi_s$ of $\mu^N_s$ and $\mu_s$. Once again we choose the coupling $\pi_s = \epsilon_N\sum_{j=1}^{\epsilon_N^{-1}} \pi^j_s$ over the mesoscopic boxes $(B_j)_{1\leq j \leq \epsilon_N^{-1}}$ and we get the bound:
$$K\bar\gamma \epsilon_N \int_0^t\left( \sum_{j=1}^{\epsilon_N^{-1}} \E\left[ \calW_1(\mu^{N,j}_s,\mu^j_s)\right] + \frac{1}{\ell} \int_{\T\times E} |v''| d\mu_s+ \frac{1}{\ell}\right)ds.$$
Using Lemma \ref{bound speed} to bound $\int_{\T\times E} |v''| d\mu_s$ by a constant, we finally can bound the terms coming from the exchanges of velocity in \eqref{evolution bound} by 
\begin{equation}\label{bound exchanges of velocities}B \leq K\bar\gamma \int_0^t \left( \frac{1}{N}\sum_{i=1}^N \E\left[ \left |V^i_s - \widetilde V^i_s \right | \right]  + \epsilon_N \sum_{j=1}^{\epsilon_N^{-1}} \E\left[ \calW_1(\mu^{N,j}_s,\mu^j_s)\right] + \frac{\epsilon_N}{\ell} \right) ds. \end{equation}

Combining the bound on the force terms \eqref{bound force terms} and the bound on the exchanges of velocities \eqref{bound exchanges of velocities}, we obtain
\begin{align}\label{intermediate bound}
\frac{1}{N}\sum_{i=1}^N \E\left[ |Y^i_t - \widetilde Y^i_t |\right ]  &\leq \frac{1}{N}\sum_{i=1}^N \E\left[ | Y^i_0 - \widetilde Y^i_0 | \right] \\
    &\hspace{0.1cm}+ K (1+\bar\gamma) \int_0^t \left( \frac{1}{N}\sum_{i=1}^N \E\left[ \left |Y^i_s - \widetilde Y^i_s \right | \right]  +  \epsilon_N \sum_{j=1}^{\epsilon_N^{-1}} \E\left[ \calW_1(\mu^{N,j}_s,\mu^j_s)\right] + \frac{\epsilon_N}{\ell}  \right) ds. \nonumber
\end{align}

As our goal is to ultimately combine this inequality with \eqref{goal bound}, we can introduce the empirical measure associated with the nonlinear processes $(\widetilde Y^i_s)_{i\in N B_j}$ to replace the term $\calW_1(\mu^{N,j}_s,\mu^j_s)$. Define
$$\widetilde \mu^{N,j}_s = \frac{1}{N\epsilon_N}\sum_{i\in NB_j} \delta_{\widetilde Y^i_s},$$
the obvious analogue of $\mu^{N,j}_s$ for the system of nonlinear processes $(\widetilde Y^i_s)_{i\in N B_j}$. We then apply the triangular inequality and a straightforward choice of coupling between $\mu^{N,j}_s$ and $\widetilde \mu^{N,j}_s$ to bound
$$\calW_1(\mu^{N,j}_s,\mu^j_s) \leq \calW_1(\mu^{N,j}_s,\widetilde \mu^{N,j}_s) + \calW_1(\widetilde \mu^{N,j}_s,\mu^j_s) \leq \frac{1}{N\epsilon_N} \sum_{i\in NB_j} |Y^i_s - \widetilde Y^i_s | +  \calW_1(\widetilde \mu^{N,j}_s,\mu^j_s).$$
Summing this last inequality over $j$, we obtain the mean value $1/N\sum_{i=1}^N |Y^i_s - \widetilde Y^i_s|$ for the first term on the right-hand side. Inserting it in \eqref{intermediate bound}, we finally get 
\begin{align}\label{final bound coupling}
\frac{1}{N}\sum_{i=1}^N \E\left[ |Y^i_t - \widetilde Y^i_t |\right ]  &\leq \frac{1}{N}\sum_{i=1}^N \E\left[ | Y^i_0 - \widetilde Y^i_0 | \right]  \\
   &\hspace{0.1cm}+K (1+\bar\gamma) \int_0^t \left( \frac{1}{N}\sum_{i=1}^N \E\left[ \left |Y^i_s - \widetilde Y^i_s \right | \right]  +  \epsilon_N \sum_{j=1}^{\epsilon_N^{-1}} \E\left[ \calW_1(\widetilde \mu^{N,j}_s,\mu^j_s)\right] + \frac{\epsilon_N}{\ell}  \right) ds. \nonumber
\end{align}\\

\textit{Step 2: Decoupling} \\ 

Observing \eqref{goal bound} and \eqref{final bound coupling}, and since
$$\E\left[ \calW_1(\widetilde \mu^N_t,\mu_t)\right] \leq \epsilon_N \sum_{j=1}^{\epsilon_N^{-1}} \E\left[ \calW_1(\widetilde \mu^{N,j}_t,\mu^j_t)\right],$$
we can see that it suffices to control the right-hand side term
$$\epsilon_N \sum_{j=1}^{\epsilon_N^{-1}-1} \E\left[ \calW_1(\widetilde \mu^{N,j}_t,\mu^j_t)\right]$$
to conclude the proof. As noted earlier, the sequence of variables $(\widetilde Y^i_t)_{i\leq N}$ is correlated since particles with close indices $i,i'$ are driven by two Poisson $\calN^i$ and $\calN^{i'}$ that share some atoms. Following \cite{Cortez}, we introduce a new system $(\bar Y^i)_{i\leq N}$ of nonlinear processes by replacing those shared atoms. In particular, we take advantage of our box structure $(B_j)_{1\leq j \leq \epsilon_N^{-1}}$ to only require independence of $(\bar Y^i_t)_{i\in NB_j}$ on each box.

The construction of the nonlinear processes $(\bar Y^i_t)_{1\leq i \leq N}$ is the following. Recall that $\calN^i$ was defined from a macroscopic Poisson random measure $\calN$ by \eqref{definition Ni}. We define a Poisson random measure $\calM$ is on $\R_+\times \T^2$, independent of $\calN$, with same intensity given by \eqref{driving Poisson point process}. For $1\leq j \leq \epsilon_N^{-1}$ and $i\in NB_j$, we define $\calM^i$ by
$$\calM^i(dt,dr) = \calN(dt,\Lambda^i,dr) + \calN(dt,dr,\Lambda^i)\1_{r\notin B_j} + \calM(dt,dr,\Lambda^i)\1_{r\in B_j\setminus \Lambda^i}.$$
By their definitions, the Poisson random measures $\calM^i$ and $\calM^{i'}$ do not share any atom if $i,i'$ are in the same set $NB_j$ for some $j$. The processes $(\bar Y^i_t)_{1\leq i \leq N}$ are now defined by the same equations \eqref{nonlinear process definition} as $(\widetilde Y^i_t)_{1\leq i \leq N}$, but replacing $\calN^i$ by $\calM^i$, and with initial values $\bar Y^i_0 = \widetilde Y^i_0$.

\begin{Lem}\label{lemma independent processes}
For any $1\leq j \leq \epsilon_N^{-1}$, the processes $(\bar Y^i)_{i\in NB_j}$, defined by
\begin{equation}\label{definition independent nonlinear processes}
\left\{
\begin{array}{l}
  d\bar X^i_t = \bar V^i_t dt \\
  d\bar V^i_t = -\left(\int_{\T\times E} \Phi_\ell(\bar r^i_0-r')\nabla W(\widetilde X^i_t-x')d\mu_t(y')+ \nabla U(\bar X^i_t)\right)dt + \int_{\T} (\Pi^i_t(r') - \bar V^i_{t^-})d\calM^{i}(t,r') \\
  \bar Y^i_0 = \widetilde Y^i_0,
\end{array}
\right.
\end{equation}
are independent.
\end{Lem}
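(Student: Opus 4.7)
My plan is to reduce the mutual independence of $(\bar Y^i)_{i \in NB_j}$ to the mutual independence of the stochastic inputs driving each of them: the initial datum $\bar Y^i_0$, the Poisson random measure $\calM^i$, and any auxiliary randomness entering through the evaluation of $\Pi^i$. By construction $(\bar Y^i_0 = \widetilde Y^i_0)_{i \in NB_j}$ are iid with distribution $\mu^j_0$ and independent of the Poisson atoms, and the mean-field $\mu_t$, the potentials $W, U$, and the mapping $\Pi^i$ are deterministic (with $\Pi^i$ depending on $i$ only through the particle-specific datum $\bar r^i_0$). So everything reduces to showing that $(\calM^i)_{i \in NB_j}$ is a mutually independent family; once this is established, a pathwise-uniqueness argument for \eqref{definition independent nonlinear processes} writes $\bar Y^i$ as a measurable functional of $(\bar Y^i_0, \calM^i)$ alone, and independence across $i$ follows.

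The key step is therefore to prove independence of $(\calM^i)_{i \in NB_j}$. I would decompose each $\calM^i$ according to its three defining pieces: the restriction of $\calN$ to $\Lambda^i \times \T$, the restriction of $\calN$ to $(\T \setminus B_j) \times \Lambda^i$, and the restriction of the independent copy $\calM$ to $(B_j \setminus \Lambda^i) \times \Lambda^i$. For $i \ne i'$ both in $NB_j$, I then enumerate the nine cross-pairs of pieces from $\calM^i$ and $\calM^{i'}$. Pieces arising from the two different Poisson measures $\calN$ and $\calM$ are automatically independent, so the real content lies in the six in-measure cases, which must live on disjoint Borel subsets of $\R_+ \times \T^2$. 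The two geometric facts that make all six checks work are $\Lambda^i \cap \Lambda^{i'} = \emptyset$ for $i \ne i'$, and $\Lambda^i \subset B_j$ for every $i \in NB_j$; the latter ensures for instance that the row $\Lambda^i \times \T$ entering $\calM^i$ does not meet the partial column $(\T \setminus B_j) \times \Lambda^{i'}$ entering $\calM^{i'}$. The standard fact that a Poisson random measure takes independent values on disjoint Borel sets then yields mutual independence of the family $(\calM^i)_{i \in NB_j}$.

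The main obstacle is the combinatorial bookkeeping in verifying those nine disjointness cases; once the row/column structure of $\calN$ built from \eqref{driving Poisson point process}, the disjointness of the cells $\Lambda^i$, and the inclusion $\Lambda^i \subset B_j$ are unpacked, each check is an elementary set-theoretic identity. A secondary verification, useful for applying the conclusion to the decoupling step of Theorem \ref{Convergence result}, is that each $\calM^i$ has the same intensity as the original $\calN^i$ in \eqref{intensity practical Poisson}, so that the marginal law of $\bar Y^i$ coincides with that of $\widetilde Y^i$, namely $\mu^j_t$ by Lemma \ref{Nonlinear process construction}.
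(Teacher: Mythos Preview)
Your argument has a genuine gap in its treatment of the map $\Pi^i$. You assert that ``the mapping $\Pi^i$ are deterministic (with $\Pi^i$ depending on $i$ only through the particle-specific datum $\bar r^i_0$)'', and from this conclude that $\bar Y^i$ is a measurable functional of $(\bar Y^i_0,\calM^i)$ alone. But this is not how $\Pi^i$ is defined: by Lemma~\ref{Optimal Coupling solid}, $\Pi^i$ is a map on $\R_+\times\T\times B_j\times(\R^d)^{2\ell N+1}$, and the notation $\Pi^i_t(r')$ in \eqref{definition independent nonlinear processes} abbreviates $\Pi^i_t(r',\widetilde r^i_0,\bold V^i_{t_-})$ with $\bold V^i_{t_-}=(V^{i+k}_{t_-})_{-\ell N\le k\le\ell N}$ the vector of velocities of the \emph{original} particle system. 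These vectors overlap across different $i\in NB_j$, and the original system is itself correlated with the Poisson measure $\calN$ used to build the $\calM^i$. So $\bar Y^i$ is not a functional of $(\bar Y^i_0,\calM^i)$ alone, and mutual independence of $(\calM^i)_{i\in NB_j}$---which you correctly establish---is insufficient by itself. The paper flags exactly this point.

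The paper closes the gap by passing to the transformed point processes $\bar\calM^i$ with atoms $(t,\Pi^i_t(r))$ and computing their joint Laplace functional. The key property used is that for \emph{every} fixed $\bold v$, the pushforward of $\sigma^i(r)\,dr$ under $r\mapsto\Pi^i_t(r,\bar r^i_0,\bold v)$ equals $w^{\mu_t,\bar r^i_0}$; hence the compensator of $\bar\calM^i$ at time $s$ depends only on $\bar r^i_0$ and not on $\bold V^i_{s_-}$. Since the $\calM^i$ do not share atoms (your disjointness check), the exponential martingale computation factorises and yields independence of $(\bar\calM^i)_{i\in NB_j}$, from which independence of $(\bar Y^i)_{i\in NB_j}$ follows. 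Your disjointness analysis is a correct ingredient, but you must supplement it with this compensator argument to handle the hidden dependence on $\bold V^i_{t_-}$.
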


Denoting by $\bar \mu^{N,j}_t$ the empirical measure associated with $(\bar Y^i_t)_{i\in NB_j}$, the triangular inequality gives
\begin{equation}\label{bound decoupling 5}\E\left[\calW_1(\widetilde\mu^{N,j}_t,\mu^j_t)\right] \leq \E\left[\calW_1(\widetilde\mu^{N,j}_t,\bar\mu^{N,j}_t)\right] + \E\left[\calW_1(\bar\mu^{N,j}_t,\mu^j_t)\right],\end{equation}
and it suffices to control $\E[\calW_1(\widetilde\mu^{N,j}_t,\bar\mu^{N,j}_t)]$ to conclude, the other term being controlled by the law of large numbers result in Appendix \ref{Appendix A}. The bound
\begin{equation} \label{easy bound wasserstein} \calW_1(\widetilde\mu^j_t,\bar\mu^j_t) \leq \frac{1}{N\epsilon_N} \sum_{i\in NB_j} |\widetilde Y^i_t - \bar Y^i_t |\end{equation}
is easily obtained for all $1\leq j\leq \epsilon_N^{-1}$. As we are ultimately interested in founding a bound for the expectation and mean over $j$ of $\calW_1(\widetilde\mu^j_t,\bar\mu^j_t)$, we are finally led to bound
\begin{align} \label{bound decoupling}
\frac{1}{N}&\sum_{i=1}^N \E\Big[|\widetilde Y^i_t - \bar Y^i_t | \Big] \leq K\int_0^t ds \frac{1}{N}\sum_{i=1}^N \E\left[|\widetilde Y^i_s - \bar Y^i_s | \right] \nonumber \\
&\quad+ \int_0^t ds \frac{1}{N}\sum_{i=1}^N \E\left[\int_{\T\times E} \Phi_\ell\left(\widetilde r^i_0 -r'\right) \left|\nabla W(\widetilde X^i_s-x') - \nabla W(\bar X^i_s-x')\right | d\mu_s(r',z')\right] \nonumber \\
&\quad+ \frac{1}{N}\sum_{i=1}^N \E\left[ \left | \int_0^t \int_{\T} \left(\Pi^i_s(r) - \widetilde V^i_{s_-}\right) d\calN^i(s,r) - \int_0^t \int_{\T} \left(\Pi^i_s(r) - \bar V^i_{s_-}\right) d\calM^i(s,r) \right | \right],
\end{align}
by the Lipschitz property for $\nabla U$. Notice that we used that $\widetilde r_0^i = \bar r_0^i$ for the term at the second line. Therefore, by the Lipchitz property for $\nabla W$, we obtain the bound
\begin{equation}\label{bound decoupling 1} K' \int_0^t ds \frac{1}{N}\sum_{i=1}^N \E\left[| \widetilde Y^i_s - \bar Y^i_s | \right] \end{equation}
for the first two terms in \eqref{bound decoupling}. The last term in \eqref{bound decoupling} can be bounded by splitting it in the three following terms:
\begin{align*}
C_1 + C_2 + C_3 &= \frac{1}{N}\sum_{j=1}^{\epsilon_N^{-1}}\sum_{i\in NB_j} \E\bigg[ \int_0^t \int_{\T} \left | \widetilde V^i_{s_-}- \bar V^i_{s_-}\right | \left(d\calN(s,\Lambda^i,r) +d\calN(s,r,\Lambda^i)\1_{r\notin B_j}\right) \bigg] \\
  &\hspace{2cm} + \frac{1}{N}\sum_{j=1}^{\epsilon_N^{-1}}\sum_{i\in NB_j} \E\left[  \int_0^t \int_{\T} \left | \Pi^i_s(r) - \widetilde V^i_{s_-}\right | d\calN(s,r,\Lambda^i)\1_{r\in B_j\setminus \Lambda^i} \right] \\
  &\hspace{2cm} + \frac{1}{N}\sum_{j=1}^{\epsilon_N^{-1}}\sum_{i\in NB_j} \E\left[  \int_0^t \int_{\T} \left | \Pi^i_s(r) - \bar V^i_{s_-}\right | d\calM(s,r,\Lambda^i)\1_{r\in B_j\setminus \Lambda^i} \right],
\end{align*}
where the first term $C_1$ corresponds to the simultaneous jumps of $\widetilde V^i$ and $\bar V^i$, \textit{i.e.} the shared atoms of $\calN^i$ and $\calM^i$ for $1\leq i \leq N$, and the two other terms $C_2$ and $C_3$ respectively correspond to the jumps of $\widetilde Y^i$ alone and to the jumps of $\bar Y^i$ alone. Computing the expectation with respect to the Poisson integral in $C_1$, we get:
\begin{equation}\label{bound decoupling 2} C_1 =  \frac{1}{N} \sum_{j=1}^{\epsilon_N^{-1}}\sum_{i\in NB_j} \frac{\bar\gamma}{2} \left(1 + \sum_{k\,:\,i+k\notin N B_j} \gamma_k \right)  \E\left[ \int_0^t \left | \widetilde V^i_s- \bar V^i_s\right |  ds \right] \leq \bar\gamma \int_0^t ds \frac{1}{N}\sum_{i=1}^N  \E\left[ \left | \widetilde V^i_s- \bar V^i_s\right |\right].
\end{equation}
As for the term $C_2$ coming from the jumps of $\calN^i$ alone, one immediately gets:
\begin{align} \label{bound decoupling 4}
C_2 &\leq \bar\gamma \frac{1}{N}\sum_{j=1}^{\epsilon_N^{-1}}\sum_{i\in NB_j}  \E\left[\int_0^t ds\int_{B_j}\left(\left | \Pi^i_s(r) \right | + \left | \widetilde V^i_s \right |\right) \sigma^i(r) dr\right] \nonumber \\
&\leq \bar\gamma \frac{1}{N}\sum_{j=1}^{\epsilon_N^{-1}}\sum_{i\in NB_j} \E\left[\int_0^t ds\int_{B_j}\left | \Pi^i_s(r) \right | \sigma^i(r) dr\right] + \bar\gamma \sum_{k= -N\epsilon_N}^{N\epsilon_N} \gamma_k \int_0^t \frac{1}{N} \sum_{i=1}^N \E\left[ \left | \widetilde V^i_s \right | \right] ds \nonumber \\
&\leq \bar\gamma \frac{1}{N}\sum_{j=1}^{\epsilon_N^{-1}}\sum_{i\in NB_j} \E\left[\int_0^t ds\int_{B_j}\left | \Pi^i_s(r) \right | \sigma^i(r)dr\right] + K \bar\gamma\frac{\epsilon_N}{\ell} \int_0^t \frac{1}{N} \sum_{i=1}^N \E\left[ \left | \widetilde V^i_s \right | \right] ds,
\end{align}
where we used the symmetry and monotonic properties \ref{H2} of $\gamma$ to bound the integral $\int_{B_j}\sigma^i(r)dr$ by the sum  $\sum_{k=-N\epsilon_N}^{N\epsilon_N} \gamma_k$ at the second line, and then bounded this sum by $K\epsilon_N/\ell$ by the definition \eqref{definition gamma} of $\gamma_k$. The sum in the second term of \eqref{bound decoupling 4} is equal to
$$\frac{1}{N} \sum_{i=1}^N \E\left[ \left | \widetilde V^i_s \right | \right] = \sum_{j=1}^{\epsilon_N^{-1}} \int_{B_j\times E} |v| d\mu_s = \int_{\T\times E} |v|d\mu_s<\infty,$$
which is finite by Lemma \ref{bound speed}. By the bound \eqref{bound optimal coupling 2} of Lemma \ref{Optimal Coupling solid}, we can bound the first term in \eqref{bound decoupling 4} and finally get the following bound for $C_2$:
\begin{equation}\label{bound decoupling 3} C_2 \leq Kt\bar\gamma\left(\frac{\epsilon_N^{1/2}}{\ell^{1/2}} + \frac{\epsilon_N}{\ell}\right) \leq K't\bar\gamma\frac{\epsilon_N^{1/2}}{\ell^{1/2}},\end{equation}
for $N$ large enough. The same bound can be obtained for the term $C_3$ coming from the jumps of $\calM^i$ alone. Combining the bound on the force terms \eqref{bound decoupling 1}, the bound on the simultaneous jumps \eqref{bound decoupling 2} and the bounds on the asynchronous jumps \eqref{bound decoupling 3} in \eqref{bound decoupling}, we get
$$\frac{1}{N}\sum_{i=1}^N \E\Big[|\widetilde Y^i_t - \bar Y^i_t | \Big] \leq K\int_0^t  \left((1+\bar\gamma)\frac{1}{N}\sum_{i=1}^N \E\left[| \widetilde Y^i_s - \bar Y^i_s | \right] +\bar\gamma \frac{\epsilon_N^{1/2}}{\ell^{1/2}} \right)ds.$$
By Gronwall's lemma and \eqref{easy bound wasserstein}, we finally get 
\begin{equation} \label{final bound decoupling}
\epsilon_N \sum_{j=1}^{\epsilon_N^{-1}} \E\left[\calW_1(\widetilde\mu^{N,j}_t,\bar\mu^{N,j}_t)\right] \leq \frac{\bar\gamma}{1+\bar\gamma} \frac{\epsilon_N^{1/2}}{\ell^{1/2}} \left(\e^{K(1+\bar \gamma) t}-1\right).
\end{equation}
Since by Lemma \ref{bound speed}, the measure $\mu_t$ admits a moment of order $2$ for all $t\geq 0$, we can apply the results of Appendix \ref{Appendix A} and get
$$\epsilon_N \sum_{j=1}^{\epsilon_N^{-1}}\E\left[\calW_1(\bar\mu^{N,j}_t,\mu^j_t)\right] \leq K' (N\epsilon_N)^{-\frac{1}{4(d+1)}} + K'\epsilon_N.$$
Combining this result with \eqref{final bound decoupling} in \eqref{bound decoupling 5} finally gives 
\begin{equation} \label{very final bound decoupling}\epsilon_N \sum_{j=1}^{\epsilon_N^{-1}} \E\left[ \calW_1(\widetilde \mu^{N,j}_t,\mu^j_t)\right] \leq \frac{\bar\gamma}{1+\bar\gamma} \frac{\epsilon_N^{1/2}}{\ell^{1/2}} \e^{K(1+\bar \gamma) t} + K' (N\epsilon_N)^{-\frac{1}{4(d+1)}}+K'\epsilon_N.
\end{equation}\\

\textit{Conclusion of the proof of Theorem \ref{Convergence result}}\\

Combining \eqref{final bound coupling} and \eqref{very final bound decoupling}, and applying Gronwall's lemma, we get the bound:
\begin{align*}
\frac{1}{N}\sum_{i=1}^N \E\left[ |Y^i_t - \widetilde Y^i_t |\right ] &\leq \frac{1}{N}\sum_{i=1}^N \E\left[ |Y^i_0 - \widetilde Y^i_0 |\right ] \e^{K(1+\bar\gamma)t} \\
     &\hspace{3cm} + K' \left(\frac{\epsilon_N}{\ell} + (N\epsilon_N)^{-\frac{1}{4(d+1)}} + \epsilon_N +\frac{\bar\gamma}{1+\bar\gamma} \frac{\epsilon_N^{1/2}}{\ell^{1/2}}\right) \e^{K(1+\bar\gamma)t}. 
\end{align*}
By Lemma \ref{Initial positions}, we can bound $\E[ |Y^i_0 - \widetilde Y^i_0 |]$ by $\epsilon_N$ for all $i$. Since $\epsilon_N$ is smaller than $\epsilon_N/\ell$, we finally get
$$\frac{1}{N}\sum_{i=1}^N \E\left[ |Y^i_t - \widetilde Y^i_t |\right ] \leq K'\left(\frac{\epsilon_N}{\ell} + (N\epsilon_N)^{-\frac{1}{4(d+1)}}+ \frac{\bar\gamma}{1+\bar\gamma} \frac{\epsilon_N^{1/2}}{\ell^{1/2}}\right) \e^{K(1+\bar\gamma)t}.$$
Finally, combining this bound together with \eqref{very final bound decoupling} in \eqref{goal bound}, gives
$$\E\left[ \calW_1(\mu^N_t,\mu_t) \right] \leq K' \left(\frac{\epsilon_N}{\ell} + (N\epsilon_N)^{-\frac{1}{4(d+1)}} + \frac{\bar\gamma}{1+\bar\gamma} \frac{\epsilon_N^{1/2}}{\ell^{1/2}}\right)\e^{K(1+\bar\gamma)t}.$$

\end{proof}


\subsection{Proof of the intermediate lemmas}

\begin{proof}[Proof of Lemma \ref{Optimal Coupling solid}]
\hspace{0.5cm}\textit{1. Construction of $\Pi^i$}\\

The construction of $\Pi^i$ is based on Lemma 3 in \cite{Cortez}. Let $1\leq j \leq \epsilon_N^{-1}$ and $i\in NB_j$. Fix $\widetilde r \in B_j$ and $\bold v^i = (v^{i+k})_{-\ell N \leq k \leq \ell N} \in (\R^d)^{2\ell N+1}$. Recall from \eqref{weighted distribution} the notation for the weighted measure $w^{\mu_t,\widetilde r }$ around $\widetilde r$, from which we select a new velocity for particle $\widetilde Y^i$. Denote also by $w^{\bold v^i}$ the weighted measure on $\R^d$ corresponding to the update of velocity among $\bold v^i$:
$$w^{\bold v^i} = \sum_{k=-\ell N}^{\ell N}\gamma_k\delta_{v^{i+k}}.$$
From a random variable $r$ with distribution $\sigma^i$ (see \eqref{measure sigma}), we want to construct a couple of variables $(v^{[ Nr ]},\Pi^i_t(r,\tilde r, \bold v^i))$ with respective distributions $w^{\bold v^i}$ and $w^{\mu_t,\widetilde r}$ for all $(t,\tilde r,\bold v^i)$. In addition, we want the distance between those variables to be small in the sense that
\begin{equation} \label{coupling goal} \int_\T | v^{[ Nr ]} - \Pi^i_t(r,\tilde r, \bold v^i) | \sigma^i(r) dr = \calW_1(w^{\bold v^i},w^{\mu_t,\widetilde r}).\end{equation}
It is straightforward to see that $v^{[ Nr ]}$ has indeed distribution $w^{\bold v^i}$ in that case. The construction of a function $\Pi^i(r)$ with distribution $w^{\mu_t,\widetilde r}$ such that \eqref{coupling goal} holds principally amounts to the construction of a random variable with given law from a uniform random variable.

In fact, let $p^i_{t,\tilde r, \bold v^i}(v,\tilde v)$ be the optimal coupling of $w^{\bold v^i}$ and $w^{\mu_t,\widetilde r}$. For a couple of random variables $(V,\widetilde V)$ with distribution $p^i_{t,\tilde r, \bold v^i}$, define $P^{i,k}_{t,\tilde r, \bold v^i}$ to be the conditional law of $\widetilde V$, given that $V=V^{i+k}$, \textit{i.e.}:
\begin{equation}\label{definition P} P^{i,k}_{t,\tilde r, \bold v^i}(B) = p^i_{t,\tilde r, \bold v^i}\left(\{v^{i+k}\}\times B \Big | \{v^{i+k}\}\times \R^d\right) = \frac{1}{\gamma_k}p^i_{t,\tilde r, \bold v^i}(\{v^{i+k}\}\times B ),\end{equation}
for any Borel subset $B$ of $\R^d$. The second equality comes from the fact that $p^i_{t,\tilde r, \bold v^i}(\{v^{i+k}\}\times \R^d) = w^{\bold v^i}(\{v^{i+k}\})= \gamma_k$. Then there exists a function 
\begin{equation}\label{definition q} q^{i,k}_{t,\tilde r, \bold v^i}:\Lambda^{i+k} \to \R^d, \end{equation}
such that, if $r$ is uniformly distributed on $\Lambda^{i+k}$, $q^{i,k}_{t,\tilde r, \bold v^i}(r)$ has distribution $P^{i,k}_{t,\tilde r, \bold v^i}$. Defining $\Pi^i$ by
\begin{equation}\label{definition pi}\Pi^i_t(r,\tilde r, \bold v^i) = \sum_{k=-\ell N}^{\ell N} q^{i,k}_{t,\tilde r, \bold v^i}(r) \1_{r\in\Lambda^{i+k}},\end{equation}
the pair $(v^{[ Nr ]},\Pi^i_t(r,\tilde r, \bold v^i))$ has indeed distribution $p^i_{t,\tilde r, \bold v^i}$ if $r$ has distribution $\sigma^i(r)dr$. Indeed,
\begin{align*}
\Prb \left(v^{[ Nr ]} = V_t^{i+k}, \Pi^i_t\in B\right) &= \Prb \left( r\in\Lambda^{i+k}, \Pi^i_t\in B\right) \\
&= \Prb \left( r\in\Lambda^{i+k}\right)  \Prb \left(\Pi^i_t\in B | r\in\Lambda^{i+k} \right) \\
&= \gamma_k  \Prb \left(q^{i,k}_{t,\tilde r, \bold v^i}(r)\in B \right) \\
&= p^i_{t,\tilde r, \bold v^i}(\{V_t^{i+k}\}\times B).
\end{align*}
\eqref{coupling goal} is now obvious from the fact $(V_t^{[ Nr ]},\Pi_t(r))$ has distribution $p^i_{t,\tilde r, \bold v^i}$ when $r$ is distributed by $\sigma^i(r)dr$. To ensure that the construction of $\Pi^i$ detailed above can be done with the desired measurability properties, we follow \cite{Cortez}. Since the mapping $(t,\tilde r, \bold v^i) \mapsto (w^{\bold v^i}, w^{\mu_t,\widetilde r})$ is measurable, then we know from Corollary 5.22 in \cite{Villani2} that there exists a measurable mapping $(t,\tilde r, \bold v^i)\mapsto p^i_{t,\tilde r, \bold v^i}$ such that $p^i_{t,\tilde r, \bold v^i}(v,\widetilde v)$ is an optimal coupling between those two measures. $P^{i,k}_{t,\tilde r, \bold v^i}$ defined in \eqref{definition P} is a kernel from $\R_+\times B_j \times (\R^d)^{2\ell N +1}$ to $\R^d$, so by Lemma 2.22 in \cite{Kallenberg}, $q^{i,k}_{t,\tilde r, \bold v^i}$ in \eqref{definition q} can be defined as a mesurable function of $(t,r,\tilde r, \bold v^i)$. Finally the definition \eqref{definition pi} ensures that $\Pi^i$ is a mesurable function of $(t,r,\tilde r, \bold v^i)$. \\

\textit{2. Proof of \eqref{bound optimal coupling}}\\

By \eqref{coupling goal}, we can directly bound $\calW_1(w^{\bold V^i_t},w^{\mu_t,\widetilde r^i_0})$ to obtain a bound on \eqref{bound optimal coupling}. By Kantorovich-Rubinstein duality formula (see \cite{Villani2}), this term can be rewritten:
\begin{equation} \label{duality formula coupling bound}
\calW_1(w^{\bold V^i_t},w^{\mu_t,\widetilde r^i_0}) = \sup_{\substack{Lip(\varphi)\leq 1 \\\varphi(0)=0}} \Bigg | \sum_{k=-\ell N}^{\ell N} \gamma_k \varphi(V^{i+k}_t) - \int_{\T\times E} \Gamma_\ell(\widetilde r^i_0-r'') \varphi(v'') d\mu_t(r'',z'') \Bigg |.
\end{equation}
By the definition \eqref{definition gamma} of $\gamma_k$, we have
$$\left | \gamma_k - \frac{1}{N}\Gamma_\ell \left(\frac{k}{N}\right) \right | = \left| \int_{\Lambda^k}\left( \Gamma_\ell \left(r \right) - \Gamma_\ell \left(\frac{k}{N}\right) \right) dr\right | \leq \frac{K}{\ell^2} \int_{\Lambda^0} |r| dr \leq \frac{K'}{N^2\ell^2},$$
using the Lipschitz property for $\Gamma_\ell$ for the first inequality. As a consequence, we can introduce the empirical measure $\mu^N_t$ in \eqref{duality formula coupling bound} by replacing $\gamma_k$ and get the bound:
\begin{align*}
&\calW_1(w^{\bold V^i_t},w^{\mu_t,\widetilde r^i_0}) \\
  &\hspace{1cm}\leq \sup_{\substack{Lip(\varphi)\leq 1 \\\varphi(0)=0}} \Bigg | \int_{\T\times E} \varphi(v')  \Gamma_\ell\left(\frac{i}{N}-r' \right)d\mu^N_t(r',z') - \int_{\T\times E}\varphi(v'') \Gamma_\ell(\widetilde r^i_0-r'')d\mu_t(r'',z'') \Bigg | \\
           &\hspace{7cm} + \frac{K}{N\ell^2}\sup_{\substack{Lip(\varphi)\leq 1 \\\varphi(0)=0}}  \int_{\T\times E} |\varphi(v')| \1_{\{|i/N - r'|<\ell/2\}} d\mu^N_t(r',z').
\end{align*}
Therefore, bounding $\varphi(v')\leq |v'|$ in the last term on the one hand, and taking expectations and the mean over $1\leq i \leq N$ on the other hand, we get:
\begin{align}\label{bound optimal coupling interm 1}
\frac{1}{N}\sum_{i=1}^N \E\left[ \calW_1(w^{\bold V^i_t},w^{\mu_t,\widetilde r^i_0}) \right] &\leq \frac{1}{N}\sum_{i=1}^N \E \Bigg [\sup_{\substack{Lip(\varphi)\leq 1 \\\varphi(0)=0}} \Bigg | \int_{\T\times E} \varphi(v')  \Gamma_\ell\left(\frac{i}{N}-r' \right)d\mu^N_t(r',z') \nonumber \\
           &\quad\quad\quad\quad\quad\quad\quad\quad\quad\quad\quad\ - \int_{\T\times E}\varphi(v'') \Gamma_\ell(\widetilde r^i_0-r'')d\mu_t(r'',z'') \Bigg | \Bigg] \nonumber \\
        &\quad +\frac{K}{N\ell^2} \frac{1}{N}\sum_{i=1}^N \frac{1}{N} \sum_{k=-\ell N}^{\ell N} \E\left[ |V^{i+k}_t | \right].
\end{align}
The sum $1/N\sum_i \E[ |V^i_t |]$ is bounded by Lemma \ref{bound speed}, therefore the last term in \eqref{bound optimal coupling interm 1} is bounded by
\begin{equation}\label{bound optimal coupling interm 2} \frac{1}{N\ell^2} \frac{1}{N}\sum_{i=1}^N \frac{1}{N} \sum_{k=-\ell N}^{\ell N} \E\left[ |V^{i+k}_t | \right] \leq \frac{K}{N \ell}.\end{equation}
We can now focus on the first term of \eqref{bound optimal coupling interm 1}. Let $\pi_t$ be a coupling between $\mu^N_t$ and $\mu_t$. Introducing cross terms, we bound the supremum term in the expectation:
\begin{align}\label{bound optimal coupling interm 3}
&\sup_{\substack{Lip(\varphi)\leq 1 \\\varphi(0)=0}} \Bigg | \int_{\T\times E} \varphi(v')  \Gamma_\ell\left(\frac{i}{N}-r' \right)d\mu^N_t(r',z') - \int_{\T\times E}\varphi(v'') \Gamma_\ell(\widetilde r^i_0-r'')d\mu_t(r'',z'') \Bigg |  \nonumber \\
    &\hspace{0.3cm} \leq\sup_{\substack{Lip(\varphi)\leq 1 \\\varphi(0)=0}} \int_{(\T\times E)^2} | \varphi(v') - \varphi(v'') | \Gamma_\ell\left(\frac{i}{N}-r' \right) d\pi_t(y',y'') \nonumber \\
     &\hspace{4cm}+ \sup_{\substack{Lip(\varphi)\leq 1 \\\varphi(0)=0}} \int_{(\T\times E)^2}|\varphi(v'')| \left | \Gamma_\ell\left(\frac{i}{N}-r' \right) - \Gamma_\ell\left(\frac{i}{N}-r''\right)\right |d\pi_t(y',y'') \nonumber \\
     &\hspace{4cm}+ \sup_{\substack{Lip(\varphi)\leq 1 \\\varphi(0)=0}} \int_{(\T\times E)^2}|\varphi(v'')| \left | \Gamma_\ell\left(\frac{i}{N}-r'' \right) - \Gamma_\ell(\widetilde r^i_0-r'')\right |d\mu_t(r'',z'') \nonumber \\
&\hspace{0.3cm}\leq \int_{(\T\times E)^2} | v' - v'' | \Gamma_\ell\left(\frac{i}{N}-r' \right) d \pi_t(y',y'') +\frac{K}{\ell^2} \int_{(\T\times E)^2}|v''|\1_{\{ |\widetilde r_0^i -r''  |<\ell/2\}}  \left | r' - r''\right |d \pi_t(y',y'') \nonumber \\
      &\hspace{4cm}+\frac{K\epsilon_N}{\ell^2} \int_{(\T\times E)^2}\1_{\{ |\widetilde r_0^i -r''  |<\ell/2\}} |v''| d\mu_t(r'',z''),
\end{align}
where we used that $|\phi(v'')| \leq |v''|$ and the Lipschitz property for $\Gamma_\ell$ at the last line, as well as the fact that $i/N,\widetilde r^i_0 \in B_j$ for some $j$ and therefore their difference is smaller than $\epsilon_N$ for the last term (see \eqref{uniform bound axial parameters}). Using Lemma \ref{bound speed}, we can bound $\int_{\T\times E} |v''| d\mu_t$ by a constant. Combining \eqref{bound optimal coupling interm 2} and \eqref{bound optimal coupling interm 3} in \eqref{bound optimal coupling interm 1}, and taking into account the expectation and the mean over $i$, we finally get the bound
\begin{align*}
&\frac{1}{N}\sum_{i=1}^N \E\left[ \calW_1(w^{\bold V^i_t},w^{\mu_t,\widetilde r^i_0})\right] \leq K \E\left[ \int_{(\T\times E)^2} | v' - v'' | d \pi_t(y',y'') \right]  \\
    &\hspace{2cm}+ \frac{K}{\ell^2} \E\left[\int_{(\T\times E)^2}|v''| \1_{\{ |\widetilde r_0^i -r''  |<\ell/2\}} \left | r' - r''\right |d\pi_t(y',y'')\right] + \frac{K\epsilon_N}{\ell} + \frac{K}{N\ell},
\end{align*}
for all coupling $\pi_t$ of $\mu^N_t$ and $\mu_t$. As $\epsilon_N > 1/N$, \eqref{bound optimal coupling} follows.\\

\textit{3. Proof of \eqref{bound optimal coupling 2}}\\

Using the definition \eqref{definition pi} of $\Pi^i$ and the definition of $q^{i,k}$ we get
$$\int_{B_j}\left | \Pi^i_t(r,\tilde r^i_0, \bold V^i_{t}) \right | \sigma^i(r) dr = \int_{\R^d\times \R^d }|v''| \1_{\left \{v' \in \left \{ V^{k}_t, \, k\in N B_j \right \}\right \}} dp^i_{t,\tilde r_0^i, \bold V^i_t}(v',v'').$$
By Cauchy-Schwarz inequality,
\begin{align*}
\int_{B_j}\left | \Pi^i_t(r,\tilde r^i_0, \bold V^i_{t}) \right | \sigma^i(r) dr  &\leq \left(\int_{\R^d} \1_{\left \{v' \in \left \{ V^{k}_t, \, k\in N B_j \right \}\right \}} dw^{\bold V^i_t}(v')\right)^{1/2} \left(\int_{\R^d} |v''|^2 dw^{\mu_t,\widetilde r^i_0}(v'')\right)^{1/2} \\ 
&\leq \left(\sum_{\substack{k = -\ell N, \\ i+k \in NB_j}}^{\ell N}\gamma_k \right)^{1/2} \left(\int_{\T\times E} \Gamma_\ell\left(\widetilde r^i_0 - r''\right) |v''|^2 d\mu_t(r'',z'')\right)^{1/2} \\
&\leq K\frac{\epsilon_N^{1/2}}{\ell^{1/2}}\left(1+ \frac{1}{2}\int_{\T\times E} \Gamma_\ell\left(\widetilde r^i_0 - r\right) |v|^2 d\mu_t(r'',z'')\right),
\end{align*}
using at the last line that the coefficients $\gamma_k$ are bounded by $K/\ell$ and using the standard inequality $(1+x)^{1/2}\leq 1+x/2$ to bound the square root. Summing this last inequality over $i,j$, taking expectations and using that $\widetilde r^i_0$ is uniform random variable on $B_j$ we get:
$$\frac{1}{N} \sum_{j=1}^{\epsilon_N^{-1}} \sum_{i\in NB_j} \E\left[\int_{B_j}\left | \Pi^i_t(r) \right | \sigma^i(r) dr\right] \leq  K\frac{\epsilon_N^{1/2}}{\ell^{1/2}}\left(1+ \frac{1}{2}\int_{\T\times E} |v''|^2 d\mu_t(r'',z'')\right).$$
Applying Lemma \ref{bound speed} gives finally \eqref{bound optimal coupling 2}.

\end{proof}

\begin{proof}[Proof of Lemma \ref{Nonlinear process construction}]
Fix $1\leq j \leq \epsilon_N^{-1}$ and $i\in NB_j$. First, by the construction of $\Pi^i$ in Lemma \ref{Optimal Coupling solid}, $(t,\omega,r)\mapsto \Pi^i_t(r,\tilde r^i_0(\omega), \bold V^i_{t_-}(\omega))$ is measurable with respect to the product of the predictable sigma field in $(t,\omega)$ and the Borel sigma field of $\T$. Therefore the integral with respect to the Poisson random measure in \eqref{nonlinear process definition} is well-defined. Using the terminology of the next section, we are first going to check that the law of the process $\widetilde Y^i$ solves the \textit{linear} martingale problem \eqref{linear martingale problem} associated with $\mu$ and starting at $\mu^j_0$ and finish with a uniqueness argument to prove that its law is in fact $\mu^j$. For any $\psi\in C^1_b(\T\times E)$, by a direct computation from \eqref{nonlinear process definition}, we have
\begin{align}\label{application Poisson}
\psi(\widetilde Y^i_t) &= \psi(\widetilde Y^i_0) + \int_0^t \widetilde V^i_s\cdot \nabla_x\psi(\widetilde Y^i_s) ds \nonumber\\
     &\quad\quad - \int_0^t ds \left(\int_{\T\times E}\Phi_\ell(\widetilde r^i_s-r')\nabla W(\widetilde X^i_s-x') d\mu_s(r',z') + \nabla U(\widetilde X^i_s)\right)\cdot \nabla_v\psi(\widetilde Y^i_s)\nonumber \\
     &\quad\quad + \bar \gamma \int_0^tds \int_{\T}\left( \psi(\widetilde r^i_s,\widetilde X^i_s,\Pi^i_s(r')) - \psi(\widetilde r^i_s,\widetilde X^i_s,\widetilde V^i_s) \right) \sigma^i(r')dr' + M_t,
\end{align} 
where $M_t$ is the compensated martingale associated with the Poisson integral. By the definition of $\Pi^i$ in Lemma \ref{Optimal Coupling solid}, the last integral can be written
\begin{align*}
&\int_{\T}\left( \psi(\widetilde r^i_s,\widetilde X^i_s,\Pi^i_s(r')) - \psi(\widetilde r^i_s,\widetilde X^i_s,\widetilde V^i_s) \right) \sigma^i(r')dr' \\
   &\hspace{5cm}= \int_{\R^d}\left( \psi(\widetilde r^i_s,\widetilde X^i_s,v') - \psi(\widetilde Y^i_s) \right) dw^{\mu_s,\widetilde r^i_0}(v') \\
   &\hspace{5cm}= \int_{\T\times E} \Gamma_\ell(\tilde r^i_0 - r') \left( \psi(\widetilde r^i_s,\widetilde X^i_s,v') - \psi(\widetilde Y^i_s) \right) d\mu_s(r',x',v'),
\end{align*}
and we can therefore rewrite \eqref{application Poisson} as
$$\psi(\widetilde Y^i_t) = \psi(\widetilde Y^i_0) +\int_0^t \calL[\mu_s]\psi(\tilde Y_s) ds + M_t.$$
Hence the law of $\widetilde Y^i$ solves the linear martingale problem associated with $\mu$ and starting at $\mu^j_0$. We are now left with the task of proving that this property is sufficient to characterize the law $\mu^j_t(r,z) = \epsilon_N^{-1}\1_{r\in B_j}\mu_t(r,z)$.

To do so, we construct $\epsilon_N^{-1}$ processes $(\hat Y^j)_{1\leq j \leq \epsilon_N^{-1}}$, whose respective laws solve the linear martingale problem associated with $\mu$ and starting at $\mu^j_0$. Let $U$ be a uniform $\calF_0$-measurable random variable on $\T$, independent of any $\hat Y^j$. Now define the process $Y$ on $\T\times E$ by
$$Y = \sum_{j=1}^{\epsilon_N^{-1}} \hat Y^j \1_{U\in B_j}.$$
We can check that the law of $Y$ solves the linear martingale problem associated with $\mu$ and starting at $\mu_0$. This suffices to deduce that $Y$ has law $\mu$ by uniqueness of the solutions to the \textit{linear} martingale problem. In fact, it is easy to see that $Y_0$ has distribution $\mu_0$ and, for any $\psi\in C^1_b(\T\times E)$, one has
\begin{align*}
\psi(Y_t) = \sum_{j=1}^{\epsilon_N^{-1}} \psi(\hat Y^j_t)\1_{U\in B_j} &= \sum_{j=1}^{\epsilon_N^{-1}} \left(\psi(\hat Y^j_0) +  \int_0^t \calL[\mu_s] \psi( \hat Y^j_s) ds + M^{\psi,j}_t \right)\1_{U\in B_j} \\
&= \psi(Y_0) + \int_0^t  \calL[\mu_s] \psi(Y_s) ds + M^\psi_t,
\end{align*}
where $M^{\psi,j}$ are martingales and $M^\psi = \sum M^{\psi,j} \1_{U\in B_j}$ is therefore a martingale.

It now remains to check that $\hat Y^j$ has distribution $\mu^j$. For any measurable function $\varphi$ and for any $1\leq j \leq \epsilon_N^{-1}$, by independence of $U$ and $\hat Y^j$,
$$\E\left[\varphi(Y_t)\1_{r_t\in B_j}\right] = \E\left[\varphi(\hat Y^j_t)\1_{U\in B_j}\right] = \epsilon_N \E\left[\varphi(\hat Y^j_t)]\right].$$
Therefore $\hat Y^j$ has distribution $\mu^j$ as expected, since
$$\E\left[\varphi(\hat Y^j_t)\right] = \epsilon_N^{-1} \E\left[\varphi(Y_t)\1_{r_t\in B_j}\right] = \int_{\T\times E} \varphi d\mu^j_t,$$
and this concludes the proof.

\end{proof}

\begin{proof}[Proof of Lemma \ref{Initial positions}]
Recall that $r^i_0$ is uniformly distributed over $B_j$, where $j$ is such that $i\in NB_j$. We therefore already have the uniform bound
$$\left | \frac{i}{N} - \widetilde r^i_0 \right | \leq \epsilon_N.$$
Recall that $Z^i_0$ has distribution $f_0(i/N,z)dz$ and $\widetilde Z^i_0$ has distribution $\epsilon_N^{-1} \int_{B_j}f(r,z)drdz$, where the integral is only over $r$, \textit{i.e.} the distribution of $\widetilde Z^i_0$ is the marginal of $\mu^j_0$ on the $z$ coordinates. Therefore, we can choose $(Z^i_0,\widetilde Z^i_0)$ to be distributed as the optimal coupling between those two measures and get, by Kantorovich-Rubinstein duality:
\begin{align*}
\E\left[ | Z^i_0 - \widetilde Z^i_0 |\right] &= \sup_{\substack{Lip(\varphi)\leq 1 \\\varphi(0)=0}} \int_E \varphi(z) f_0\left(\frac{i}{N},z\right)dz - \epsilon_N^{-1} \int_{B_j\times E} \varphi(z) f_0(r,z) drdz \\
&= \epsilon_N^{-1}\sup_{\substack{Lip(\varphi)\leq 1 \\\varphi(0)=0}} \int_{B_j\times E} \varphi(z)\left( f_0\left(\frac{i}{N},z\right) - f_0(r,z) \right)drdz \\
&\leq \epsilon_N^{-1} \sup_{\substack{Lip(\varphi)\leq 1 \\\varphi(0)=0}} \int_{B_j\times E} |\varphi(z)| \left| \frac{i}{N} -r \right| h(z)drdz \\
&\leq \epsilon_N^{-1} \int_{B_j}\left| \frac{i}{N} -r \right| dr \int_E |z| h(z)dz \\
&\leq K \epsilon_N
\end{align*}
using hypothesis \ref{H5} at the third line and at the last line. This concludes the proof.
\end{proof}

\begin{proof}[Proof of Lemma \ref{bound speed}]
Let $Y_t$ be the canonical process associated with the solution to the nonlinear martingale problem \eqref{nonlinear mean field martingale problem} starting at $\mu_0$. Defining the energy
\begin{equation}\label{definition energy of one particle} \calE[\mu_t](Y_t) := \frac{1}{2} |V_t|^2 + \frac{1}{2}\int_{\T\times E}\Phi_\ell(r_0-r') W(X_t-x')d\mu_t(y') + U(X_t),\end{equation}
we can see that its expectation is constant, \textit{i.e.}
$$\E[\calE[\mu_t](Y_t)] = \int_{\T\times E} \calE[\mu_t](y) d\mu_t(y) = \int_{\T\times E} \calE[\mu_0](y) d\mu_0(y),$$ 
which is finite by assumption \ref{H7}. In particular, by assumption \ref{H1} on $U$, we get the moment bound we expected.\\

A similar argument holds for the particle system defined by \eqref{original system solid}, which clearly conserves energy in the sense that:
\begin{align*}
\int_{\T\times E} \calE[\mu^N_t](y) d\mu^N_t(y) &= \sum_{i=1}^N\left(\frac{1}{2}|V^i_t|^2 + \frac{1}{2}\sum_{k=-\ell N}^{\ell N} \phi_k W(X^i_t - X^{i+k}_t) + U(X^i_t) \right) \\
&= \int_{\T\times E} \calE[\mu^N_0](y) d\mu^N_0(y)
\end{align*}
almost surely for all $t>0$.
\end{proof}

\begin{proof}[Proof of Lemma \ref{Lemma bound potential energy}]
The proof of this lemma is similar to that of Lemma \ref{bound speed} and is again based on conservation of energy. Since the law of $\widetilde Y^i_t$ is given by $\mu^j_t$ if $i\in NB_j$, one has:
\begin{align*}
\frac{1}{N} \sum_{i=1}^N \E\bigg[ \int_{\T\times E}\frac{1}{\ell} \1_{|\widetilde r_0^i-r''|<\ell/2} &|\nabla W(\widetilde X^i_t -x'') |d\mu_t(r'',z'')\bigg] \\
&= \epsilon_N \sum_{j=1}^{\epsilon_N^{-1}} \int_{(\T\times E)^2}\frac{1}{\ell} \1_{|r'-r''|<\ell/2} |\nabla W(x' -x'') |d\mu^j_t(r',z') d\mu_t(r'',z'') \\
&= \int_{(\T\times E)^2} \frac{1}{\ell} \1_{|r'-r''|<\ell/2} |\nabla W(x' -x'')| d\mu_t(r',z') d\mu_t(r'',z'').
\end{align*}

By Jensen's inequality and assumption \ref{H1} on $W$, we get
\begin{align*}
\frac{1}{N} \sum_{i=1}^N \E\bigg[ \int_{\T\times E}\frac{1}{\ell} \1_{|\widetilde r_0^i-r''|<\ell/2} &|\nabla W(\widetilde X^i_t -x'') |d\mu_t(r'',z'')\bigg] \\
&\leq \left( c \int_{(\T\times E)^2} \frac{1}{\ell} \1_{|r'-r''|<\ell/2} W(x' -x'')d\mu_t(r',z') d\mu_t(r'',z'')\right)^{1/2}.
\end{align*}

It just remains to show that this last term is bounded by a constant that does not depend on $\ell$. This fact is true if we replace $\ell^{-1}\1_{|r'-r''|<\ell/2}$ by $\Phi_\ell(r'-r'')$, by conservation of energy, following the proof of Lemma \ref{bound speed}. However, $\Phi_\ell(r)$ is not necessarily bigger than $\ell^{-1}\1_{|r|<\ell/2}$, in particular for $r$ close to $\ell/2$ or $-\ell/2$. Instead, we are going to take advantage of the integration over $r'$ and $r''$ and of the translation invariance of the Lebesgue measure to properly achieve this bound.

Let $a$ be the only real number in $[0,1/2]$ such that $\phi(a)=\phi(-a)=\| \phi \|_\infty/2$. Adding several times shifted version of the function $\phi$, we can upper bound the indicator function. More precisely, there exists a finite integer $m$ such that
$$\frac{\| \phi \|_\infty}{2} \1_{|r|\leq 1/2} \leq \sum_{k=-m}^m \phi(r+ka).$$
From this, we deduce that
$$\frac{1}{\ell} \1_{|r|\leq \ell /2} \leq \frac{2}{\| \phi \|_\infty} \sum_{k=-m}^m \Phi_\ell(r+ka).$$
Inserting this in the integral term, it now suffices to bound it by
\begin{align*}
&\int_{(\T\times E)^2} \frac{1}{\ell}\1_{|r'-r''|<\ell/2} W(x' -x'')d\mu_t(r',z') d\mu_t(r'',z'') \\
&\hspace{5cm}\leq  \frac{2(2m+1)}{\| \phi \|_\infty}\int_{(\T\times E)^2} \Phi_\ell(r'-r'') W(x' -x'')d\mu_t(r',z') d\mu_t(r'',z''),
\end{align*}
where we used translation invariance of the Lebesgue measure on $\T$.

\end{proof}

\begin{proof}[Proof of Lemma \ref{lemma independent processes}]
Denote by $\bar \calM^i$ the point process on $\T\times \R^d$ with atoms given by $(t,\Pi_t(r))$ for any atom $(t,r)$ of $\calM^i$. Since the initial positions $(\bar Y^i_0)_{i\in NB_j}$ are independent, by the equation \eqref{definition independent nonlinear processes}, independence of the processes $(\bar Y^i)_{i\in NB_j}$ will follow from the independence of $(\bar \calM^i)_{i\in NB_j}$.

The independence of the point processes $(\calM^i)_{i\in NB_j}$ is straightforward from the fact they do not share atoms, but by the definition of $\Pi^i$ in Lemma \ref{Optimal Coupling solid}, this argument alone does not guarantee the independence of $(\bar \calM^i)_{i\in NB_j}$. Let $g_1,...,g_n$ be $n$ nonnegative compactly supported functions defined on $\R^d$ and $i_1,...,i_n$ be $n$ indices in $NB_j$. Let
$$G_t := \sum_{k=1}^n \int_0^t ds \int_{\R^d} g_k(v) d\bar \calM^{i_k}(s,v) = \sum_{k=1}^n \int_0^t ds \int_{\T} g_k(\Pi^{i_k}_s(r)) d \calM^{i_k}(s,r),$$
by definition of $\bar \calM^{i_k}$. Since $(\calM^{i_k})_{1\leq k \leq n}$ do not share atoms almost surely, then with probability one,
$$\e^{-G_t} = 1 + \sum_{k=1}^n \int_0^t ds \int_{\T} \left(\exp\left(-G_{s_-} - g_k(\Pi^{i_k}_s(r))\right) - \exp\left(-G_{s_-} \right) \right) d \calM^{i_k}(s,r).$$
Taking expectations, we get
$$\E\left[ \e^{-G_t} \right] = 1 + \int_0^t ds \E\left[ \e^{-G_s} \right] \left(\sum_{k=1}^n \int_{\R^d} \left(\e^{ - g_k(v')} - 1 \right) dw^{\mu_s,\bar r_0^{i_k}}(v') \right),$$
which gives
$$\E\left[ \e^{-G_t} \right]  = \prod_{k=1}^n \exp\left( \int_0^t ds \int_{\R^d} \left(\e^{ - g_k(v')} - 1 \right) dw^{\mu_s,\bar r_0^{i_k}}(v')\right),$$
and this concludes the independence of $(\bar \calM^i)_{i\in NB_j}$.

\end{proof}


\section{The nonlinear martingale problem}\label{Section 2}




This section is devoted to the proof of Proposition \ref{nonlinear mean field martingale problem proposition}. Recall the formalism of \eqref{original system solid new formalism} with a point process that selects directly the neighbours' velocities. We expect that, as $N$ goes to infinity, the typical trajectories of the particles in the system \eqref{original system solid} should be close to the solution of the nonlinear stochastic differential equation
\begin{equation}\label{nonlinear mean field SDE}
\left\{
\begin{array}{l}
  dX_t = V_t dt \\
  dV_t = -\left(\int_{\T\times E} \Phi_\ell(r_0-r')\nabla W(X_t-x')d\mu_t(r',z') + \nabla U(X_t)\right)dt \\
      \hspace{9cm}+ \int_{\R^d} (v' - V_{t^-})d\calN^{\mu,r_0}(t,v'),
\end{array}
\right.
\end{equation}
where $\calN^{\mu,r_0}$ is a Poisson random measure on $\R_+\times \R^d$, independent of $(X_0,V_0)$ given $r_0$, with intensity
\begin{equation}\label{intensity} \bar\gamma dt dw^{\mu_t,r_0}(v'),\end{equation}
given $r_0$. $w^{\mu_t,r_0}$ is the velocity marginal of $\mu_t$, weighted by the function $\Gamma_\ell$ around $r_0$, as defined in \eqref{weighted distribution} (replacing obviously $\tilde r_0^i$ by $r_0$ in this formula). Moreover, we set $r_t= r_0$ for all time $t\geq 0$. Formally, \eqref{nonlinear mean field SDE} is given by taking the limit $\mu$ of $\mu^N$ in \eqref{original system solid new formalism}.

To deal with the nonlinear problems, we will need to work with solutions of the associated \textit{linear} problems. Let $Q\in\calM^1(\calD)$ be a fixed probability measure. We say that $\mu$ is a solution to the \textit{linear martingale problem} associated with $Q$ and starting at $\nu_0\in\calM^1(\T\times E)$ if $\mu_0=\nu_0$ and
\begin{equation}\label{linear martingale problem}
M^{\psi}_t = \psi(Y_t) - \psi(Y_0) - \int_0^t \mathcal{L}[Q_s] \psi(Y_s)ds
\end{equation}
is a martingale under $\mu$ for any $\psi\in C^1_b(\T\times E)$, where $\mathcal{L}$ is the operator defined by \eqref{definition operator L}. Similarly, the \textit{linear mean field SDE} associated with $Q$ will refer to the SDE \eqref{nonlinear mean field SDE} where $\mu$ is replaced by $Q$. 

\begin{Prop}\label{linear mean field martingale problem proposition}
Let $\nu_0 \in \calM^1(\T\times E)$ be such that its $r$-marginal is the Lebesgue measure on $\T$ and $\int_{\T\times E} |z| d\nu_0$. Let $Y_0$ be random variable with law $\nu_0$ and let $Q\in\calD$ such that for all $T\geq 0$,
\begin{equation}\label{h1 for linear problem} \sup_{t\leq T} \int_{\T\times E} |z| dQ_t(r,z) < \infty.\end{equation}
There is a unique solution to the linear stochastic differential equation, both pathwise and in law. Its law $\mu$ is the unique solution to the linear martingale problem \eqref{linear martingale problem} associated with $Q$ starting at $\nu_0$. Moreover, for any $T>0$, $\mu$ satisfies
\begin{equation}\label{h1 for solution} \sup_{t\leq T} \int_{\T\times E} |z| d\mu_t(r,z) <\infty,\end{equation}
and for any $t>0$, and any measurable $\varphi:\T\to \R$
\begin{equation}\label{h2 for solution} \int_{\T\times E}\varphi(r)d\mu_t(r,z) =  \int_{\T\times E}\varphi(r)d\nu_0(r,z). \end{equation}
\end{Prop}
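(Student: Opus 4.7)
The plan is to prove the statement by first establishing pathwise existence and uniqueness for the linear SDE (which, once $Q$ is fixed, is really a piecewise-deterministic Markov process with given and bounded jump intensity), then to derive the martingale problem statement and its uniqueness from this SDE-level well-posedness, and finally to extract the moment bound and the preservation of the $r$-marginal as easy consequences.

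For existence and uniqueness of the SDE, I would exploit that $r_t \equiv r_0$ and that the Poisson random measure $\calN^{Q,r_0}$ has total intensity $\bar\gamma\,dt$ (since $w^{Q_t,r_0}$ is a probability measure on $\R^d$). I would construct $\calN^{Q,r_0}$ conditionally on $r_0$ as a marked Poisson process: take jump times $(T_n)_n$ of a rate-$\bar\gamma$ Poisson process, and conditionally on $(T_n,r_0)$ draw $V^{(n)}$ independently from $w^{Q_{T_n},r_0}$. Between consecutive jump times the SDE reduces to the ODE $\dot X = V$, $\dot V = -b_t(X)$ with $b_t(x) := \int \Phi_\ell(r_0-r')\nabla W(x-x')dQ_t(r',z') + \nabla U(x)$; the integrand is Lipschitz in $x$ uniformly in $t$ because $\|\Phi_\ell\|_\infty \leq \|\phi\|_\infty/\ell$ is finite and $\nabla W$, $\nabla U$ are Lipschitz under \ref{H1}. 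Cauchy--Lipschitz thus gives existence and pathwise uniqueness on each interjump interval, and gluing produces a unique global solution for any initial condition measurable with respect to $\calF_0$.

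For the martingale problem, Itô's formula for jump processes applied to the SDE solution immediately shows that its law $\mu$ solves \eqref{linear martingale problem}: the drift part of $\mathcal{L}[Q_t]$ comes from the ODE flow, the jump part from the compensator $\bar\gamma\,dt\,dw^{Q_t,r_0}(v')$. For uniqueness of the linear martingale problem, I plan to run the converse argument: given any solution $\mu$, use the fact that for test functions depending only on $r$ the martingale identity forces $r_t = r_0$ a.s., and that the pure-jump part of $V_t$ has predictable compensator $\bar\gamma\, dw^{Q_t,r_0}(v')\,dt$. Standard martingale representation for integer-valued random measures with absolutely continuous compensator (after possibly enlarging the probability space) then lets me recover a Poisson random measure $\calN^{Q,r_0}$ with the required intensity, and writing down the semimartingale decomposition shows that the canonical process satisfies the linear SDE on this extended space. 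Pathwise uniqueness for the SDE, combined with the Yamada--Watanabe-type principle, then upgrades to uniqueness in law, giving uniqueness of the martingale problem.

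Finally, the moment bound \eqref{h1 for solution} follows by a direct Gronwall argument on $m(t) := \E[|X_t|+|V_t|]$: the drift contributes an affine-in-$m(t)$ term (via the Lipschitz property and $\nabla W(0) = \nabla U(0) = 0$) and the jump contribution is bounded by $\bar\gamma \int |v'|\,dw^{Q_t,r_0}(v') \leq \bar\gamma\, \|\Gamma_\ell\|_\infty \sup_{s\leq T}\int|z|\,dQ_s(r',z')$, which is finite by \eqref{h1 for linear problem}. The identity \eqref{h2 for solution} is immediate because $r_t = r_0$ a.s. and the $r$-marginal of $\nu_0$ equals that of $\mu_t$. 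I expect the main technical step to be the uniqueness of the martingale problem, namely the construction of a driving Poisson random measure from an abstract martingale-problem solution; the rest amounts to classical SDE/Gronwall arguments once the PDMP structure is in place.
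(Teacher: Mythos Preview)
Your proposal is correct and matches the paper's own outline almost exactly: the paper does not give a detailed proof either, but sketches the same three steps (well-posedness of the linear SDE, then Itô's formula to show the law solves the martingale problem, then the representation argument on an enlarged space for uniqueness, citing \cite{ElKaroui}, \cite{Lepeltier}, \cite{Tanaka}). The only minor cosmetic difference is that the paper phrases SDE well-posedness as a ``contraction result'' while you build the solution directly from the PDMP structure via Cauchy--Lipschitz between jump times; both are standard and yield the same conclusion.
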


The proof of Proposition \ref{linear mean field martingale problem proposition} is classical and left to the reader. We only give the main steps for sake of completeness. It suffices to prove first a contraction result for the solutions of the linear SDE, which is straightforward in the linear case. Then the law of the solutions of the linear SDE solve the linear martingale problem \eqref{linear martingale problem}. To prove that it is the only one, we can follow \cite{Desvillettes,FournierMeleard} and adapt the classical representation arguments from \cite{ElKaroui}, \cite{Lepeltier} or \cite{Tanaka} by proving that the canonical process associated with a solution to the martingale problem can be represented as a solution to the linear SDE on an enlarged probability space.

With this result, we are now able to prove Proposition \ref{nonlinear mean field martingale problem proposition}. The proof is inspired by the proof of Graham \cite{Graham} and relies on a contraction estimate in the space $\calM^1(\calD_T)$ for some $T>0$, where $\calD_T := D([0,T],\T\times E)$ is the set of right-continuous functions with left-limits defined on the time interval $[0,T]$ with values in $\T\times E$. To take into account the particular role of the parameter $r$, we will use a new appropriate Wasserstein-type distance. Let us first introduce the more classical distance that we will use throughout the proof. From the uniform distance
\begin{equation}\label{definition uniform distance}\| y^1-y^2 \|_{\infty,T} := \sup_{t\leq T}|y^1_t-y^2_t| \end{equation}
between $y^1,y^2 \in\calD_T$ and from the Skorokhod distance $\|\cdot\|_{0,T}$ (see \cite{Billingsley}), we can consider the associated Wasserstein distances between $Q^1,Q^2 \in \calM^1(\mathcal D_T)$:
$$\calW_{\infty,T} (Q^1,Q^2) = \inf_{\pi\in\Pi(Q^1,Q^2)} \int \| y-y' \|_{\infty,T} d\pi(y,y'),$$
and similarly for $\calW_{0,T}$. In particular
\begin{equation}\label{trivial bound wasserstein}\calW_{0,T} (Q^1,Q^2) \leq \calW_{\infty,T} (Q^1,Q^2),\end{equation}
holds for any measures $Q^1$ and $Q^2$ by comparison of the two distances $\|\cdot\|_{\infty,T}$ and $\|\cdot\|_{0,T}$. We will also use the notation $\calW_{\infty,T}$ for Wasserstein distances between probability measures on the space of trajectories on $\T\times E$, $E$ and $\R^d$. We will only deal with measures $Q^1,Q^2 \in \calM^1(\mathcal D)$ satisfying the following properties
\begin{enumerate}[label=\textbf{(h\arabic*)}]
\item \label{h1} $\sup_{t\leq T} \int_{\T\times E} |z| dQ^i_t <\infty$ for $i=1,2$ and for all $T>0$.
\item \label{h2} $Q^1$ and $Q^2$ only charge trajectories $(r_t,x_t,v_t)_{t\geq 0}$ with constant $r_t = r_0$. Moreover, the $r$-marginal of $Q^1_0$ and $Q^2_0$ is the Lebesgue measure on $\T$.
\end{enumerate}
Under \ref{h2}, define the probability measures $Q^{1,r}$ and $Q^{2,r}$ on $D(\R_+,E)$ to be the conditional distributions of $Q^1$ and $Q^2$ on the $z$-trajectories given $r$. 
\begin{Def}[Sliced Wasserstein distance]\label{sliced wasserstein}
The sliced Wasserstein distance between two measures $Q^1,Q^2\in \calM^1(\mathcal D_T)$ satisfying \ref{h2} is the distance
\begin{equation}\label{new distance definition}  \calS \calW_{\infty,T}(Q^1,Q^2) := \int_\T dr \calW_{\infty,T}(Q^{1,r},Q^{2,r}). \end{equation}
\end{Def}

Such a terminology has been used in a different context (see Chapter 5.5.4 in \cite{Santambrogio}), although the ideas behind both definitions are close. Therefore we will stick to this terminology. We will actually prove the contraction estimate for sliced Wasserstein distance by constructing an explicit coupling of the solutions of the linear martingale problem associated with $Q^1$ and $Q^2$. Moreover, the bound
$$\calW_{\infty,T}(Q^1,Q^2) \leq \calS \calW_{\infty,T}(Q^1,Q^2).$$
follows directly by considering the coupling $(r,Z,r,Z')\in (\mathcal D_T)^2$ of $Q^1$ and $Q^2$, where $r$ is a uniform random variable on $\T$ and $(Z,Z')$ has distribution given by the optimal coupling of $Q^{1,r}$ and $Q^{2,r}$. Combined with \eqref{trivial bound wasserstein}, we get
\begin{equation}\label{trivial bound wasserstein 2}\calW_{0,T} (Q^1,Q^2) \leq \calS\calW_{\infty,T} (Q^1,Q^2).\end{equation}

\begin{proof}[Proof of Proposition \ref{nonlinear mean field martingale problem proposition}]

In the whole proof, $K$ and $K'$ are positive constants that change from one line to another and depend on the parameters of the problem $W$, $U$, $\phi$ $\gamma$ but also $\ell$ and $\bar\gamma$, since we do not need to keep track of these elements anymore.\\

\textit{1. Contraction estimate for the sliced Wasserstein distance on a short time interval} \\

Let $Q^1$ and $Q^2$ be two elements of $\calM^1(\calD)$ satisfying \ref{h1} and \ref{h2}. We construct two coupled solutions $Y^1=(r^1,Z^1)$ and $Y^2=(r^2,Z^2)$ of the \textit{linear} SDE associated with $Q^1$ and $Q^2$ respectively and with initial distribution $\nu_0$. We first couple their initial positions by setting $r^1_0=r^2_0 = r_0$, where $r_0$ is a uniform random variable on $\T$, and we set $Z^1_0=Z^2_0$, with $Z^1_0$ distributed with the probability measure $\nu^{r_0}_0$. Denoting by $\mu^i$ the law of $Y^i$, $i,=1,2$, we immediately get
\begin{equation}\label{bound bar W}
\calS \calW_{\infty,T}(\mu^1,\mu^2) \leq \E\left[\| Z^1 - Z^2 \|_{\infty,T} \right].
\end{equation}

We then construct the Poisson random measures $\calN^{Q^1,r_0}$, $\calN^{Q^1,r_0}$ that drive the two processes. By \ref{h2}, the time intensities of the Poisson random measures $\calN^{Q^1,r_0}$ and $\calN^{Q^1,r_0}$ are equal (to $\bar\gamma$), thus we can take the same jump times $(T_n)_{n\geq 0}$ for both. We will take $V^1_{T_n}$ and $V^2_{T_n}$ to be distributed as the optimal coupling of $w^{Q^1_{T_n},r_0}$ and $w^{Q^2_{T_n},r_0}$, that is to say:
\begin{equation}\label{optimal coupling weighted measure} \E \left[| V^1_{T_n} - V^2_{T_n}| | r_0,T_n\right] = \calW_1(w^{Q^1_{T_n},r_0},w^{Q^2_{T_n},r_0}).\end{equation}

We have a good control on \eqref{optimal coupling weighted measure} with the sliced distance by the following argument. Let $\pi^{r} \in \Pi(Q^{1,r},Q^{2,r})$ be the optimal coupling between $Q^{1,r}$ and $Q^{2,r}$ for $\calW_{\infty,T}$, for $r\in \T$. Using Kantorovich-Rubinstein duality formula and hypothesis \ref{h2}, we can bound $\mathcal{W}_1(w^{Q^1_{t},r_0},w^{Q^2_{t},r_0})$ for all $t\geq 0$:
\begin{align} \label{control jump times}
\mathcal{W}_1(w^{Q^1_t,r_0},w^{Q^2_t,r_0}) &= \sup_{Lip(\varphi)\leq 1}  \left |\int_{\T} dr \Gamma_\ell(r_0-r)\int_{E^2}(\varphi(v')  - \varphi(v'')) d\pi^{r}_t(z',z'') \right | \nonumber \\
&\leq K \calS \calW_{\infty,T}(Q^1,Q^2),
\end{align}
bounding $\Gamma_\ell$ by a constant and the difference of the Lipschitz functions by $|v'-v''|$. As a consequence we control what happens at jump times. We will only be concerned by the jump times before $T$, so set $S_n = T\wedge T_n$. The quantity $d_n :=  \|Z^1_t -Z^2_t\|_{\infty,S_n}$ is clearly bounded by
$$d_n \leq \sup_{t<S_n} |Z^1_t - Z^2_t | + |Z^1_{S_n} - Z^2_{S_n}|.$$

The last term can be bounded in expectation using that $X$ is time-continuous and $V$ is only discontinuous at jump times, in which case \eqref{control jump times} controls the difference of the two velocities:
$$ \E\left[|Z^1_{S_n} - Z^2_{S_n}|\right] \leq \E\left[\sup_{t<S_n} | Z^1_t-Z^2_t|\right]  +  K \calS \calW_{\infty,T}(Q^1,Q^2).$$
Combining this with the previous inequality gives:
\begin{equation}\label{recursion for dn}
\E[d_n] \leq K' \E[d_{n-1}] + K' \E\left[\sup_{S_{n-1}<t<S_n}  | Z^1_t-Z^2_t| \right]+ K' \calS \calW_{\infty,T}(Q^{1},Q^{2}).\end{equation}

It just remains to control properly the second term of \eqref{recursion for dn} to get a recursive inequality on $\E[d_n]$. This second term corresponds to a deterministic evolution between two jump times, so we need in particular to control the difference between the force terms at time $t$. By the Lipschitz property for $\nabla W$ and \ref{h2}, we have
\begin{align*} 
&\Bigg | \int_{\T\times E} \Phi_\ell(r_0-r')\nabla W(X^1_t-x')dQ^1_t(r',z') - \int_{\T\times E} \Phi_\ell(r_0-r'')\nabla W(X^2_t-x'')dQ^2_t(r'',z'')\Bigg |  \\
    &\hspace{4cm}\leq  \int_{\T} dr \Phi_\ell(r_0-r)\int_{E^2} \left | \nabla W(X^1_t-x') - \nabla W(X^2_t-x'') \right | d\pi^{r}_t(z',z'') \\
    &\hspace{4cm}\leq K | X^1_t -X^2_t | +  K \calS \calW_{\infty,T}(Q^{1},Q^{2}),
\end{align*}
introducing cross terms at the last line. The contribution of the difference between the pinning terms $\nabla U$ gives an extra $ K |X^1_t - X^2_t|$ term. Hence, we get for $S_{n-1}<t<S_n$
\begin{align*}
| Z^1_t - Z^2_t | &\leq |Z^1_{S_{n-1}} - Z^2_{S_{n-1}} | + K \int_{S_{n-1}}^t |Z_s^1 - Z_s^2|ds  + K T \calS\calW_{\infty,T}(Q^{1},Q^{2})\\
&\leq d_{n-1} + K T \sup_{S_{n-1}<s<S_n}|Z^1_s - Z^2_s| + K T\calS \calW_{\infty,T}(Q^{1},Q^{2}).
\end{align*}

Taking the supremum for $S_{n-1}<t<S_n$ on the left-hand side and choosing $T$ sufficiently small so that $KT<1$, we get the following bound:
\begin{equation}\label{hamiltonian dynamic bound}
\sup_{S_{n-1}<t<S_n}|Z^1_t - Z^2_t| \leq \frac{1}{1-K T} d_{n-1} + \frac{KT}{1-KT}\calS\calW_{\infty,T}(Q^{1},Q^{2}). 
\end{equation}

We can now combine this bound \eqref{hamiltonian dynamic bound} for the deterministic dynamic evolution to \eqref{recursion for dn} to get that:
\begin{align*}
\E[d_n] &\leq K\frac{2-KT}{1-KT} \E[d_{n-1}] + \frac{K}{1-KT} \calS\calW_{\infty,T}(Q^{1},Q^{2}) \\
&\leq a_T \E[d_{n-1}] + b_T \calS\calW_{\infty,T}(Q^{1},Q^{2}),
\end{align*}
where we denoted for simplicity
$$a_T = K\frac{2-KT}{1-KT}, \hspace{0.5cm} b_T = \frac{K}{1-KT}.$$

By recursion and using the initial condition $d_0=0$, we get
\begin{equation} \label{control distance recursion} \E[d_n] \leq  \frac{a_T^n-1}{a_T-1}b_T\calS \calW_{\infty,T}(Q^1,Q^2). \end{equation}

Now that we control the trajectories until time $S_n$, we can extend this control up to time $T$. Let $N_T$ be the number of jumps during the interval of time $[0,T]$. Then
\begin{equation}\label{last control hamiltonian}\E\left[ \| Z^1 - Z^2 \|_{\infty,T} \right] \leq \E[d_{N_T}] + \E\left[\sup_{T_{N_T}< t\leq T} |Z^1_t -Z^2_t|\right].\end{equation}

As there is no jump between times $T_{N_T}$ and $T$, we can apply the same estimates that lead to \eqref{hamiltonian dynamic bound} to the second term on the right-hand side of \eqref{last control hamiltonian}:
\begin{align*}
\E\left[\sup_{T_{N_T}< t\leq T} |Z^1_t -Z^2_t|\right] &\leq \frac{1}{1-K T} \E[d_{N_T}] + \frac{KT}{1-KT}\calS\calW_{\infty,T}(Q^1,Q^2)\\
&\leq \left(\frac{a_T}{K}-1\right) \E[d_{N_T}] + \left(\frac{b_T}{K}-1\right)\calS \calW_{\infty,T}(Q^1,Q^2),
\end{align*}
in terms of $a_T$ and $b_T$. All in all, combining this last inequality with \eqref{control distance recursion} and \eqref{last control hamiltonian} leads to
\begin{align} \label{control coupling by sliced wasserstein}
\E\left[ \|Z^1_t -Z^2_t\|_{\infty,T}\right] &\leq \frac{a_T}{K} \E[d_{N_T}] + \left(\frac{b_T}{K}-1\right) \calS \calW_{\infty,T}(Q^1,Q^2)\nonumber \\
& \leq \left(\frac{a_Tb_T}{K(a_T-1)} \left(\E[a_T^{N_T}]-1\right) + \frac{b_T}{K} -1 \right) \calS \calW_{\infty,T}(Q^1,Q^2).
\end{align}
As $N_T$ has a Poisson distribution with parameter $\bar T$ given $r_0$, the coefficient multiplying $\calS \calW_{\infty,T}(Q^1,Q^2)$ can be computed explicitly:
\begin{align*}
c_T &:= \frac{a_Tb_T}{K(a_T-1)} \left(\E[a_T^{N_T}]-1\right) + \frac{b_T}{K} -1\\
&= \frac{a_Tb_T}{K(a_T-1)} \left(\exp\left(\bar\gamma T(a_T-1)\right)-1\right) + \frac{b_T}{K}-1.
\end{align*}
Since $b_0=K$, it is easy to see that $c_0 = 0$. As $c_T$ is a continuous function of $T$, we can choose $T$ small enough so that $c_T<1$. Combining \eqref{control coupling by sliced wasserstein} with \eqref{bound bar W} finally gives the following contraction estimate for a short time $T>0$:
\begin{equation}\label{bound3 solid}
\calS \calW_{\infty,T}(\mu^1,\mu^2) \leq c_T \calS \calW_{\infty,T}(Q^1,Q^2).
\end{equation}
\\

\textit{2. Existence and uniqueness of solutions to the martingale problem} \\

Uniqueness in $\mathcal{M}^1(\mathcal{D}_T)$ follows from the contraction estimate \eqref{bound3 solid}. Existence in $\mathcal{M}^1(\mathcal{D}_T)$ follows from a classical iteration procedure. By \eqref{h1 for solution} and \eqref{h2 for solution} in Proposition \ref{linear mean field martingale problem proposition}, the solutions of the linear martingale problem are well defined iteratively and satisfy hypotheses \ref{h1} and \ref{h2}. By the contraction estimate \eqref{bound3 solid}, we conclude that the sequence is Cauchy for $\calS\calW_{\infty,T}$ and therefore for $\calW_{0,T}$ by \eqref{trivial bound wasserstein 2}. From the completeness of $(\mathcal D_T, \| \cdot \|_{0,T})$, $\calW_{0,T}$ makes $\calM^1(\mathcal D_T)$ a complete metric space (see \cite{Villani2}). Consequently, the sequence converges in $\calM^1(\mathcal D_T)$.

Finally, we extend existence and uniqueness to $\mathcal{M}^1(\mathcal{D})$ by the following argument. Let $\mu$ be the solution to the nonlinear martingale problem starting at $\nu_0$ on $\mathcal{M}^1(\mathcal{D}_T)$ and let $Q^1$ and $Q^2$ be two measures on $\mathcal{M}^1(\mathcal{D})$ such that $Q^1_t = Q^2_t = \mu_t$ for $t\leq T$. Define $\mu^1$ and $\mu^2$ to be the solutions of the linear martingale problem associated with $Q^1$ and $Q^2$ on $\mathcal{M}^1(\mathcal{D}_{2T})$. Thus $\mu^1_T = \mu^2_T$ by uniqueness of the linear martingale problem on $\mathcal{M}^1(\mathcal{D}_{T})$ and the same contraction estimate as \eqref{bound3 solid} hold for the interval $[T,2T]$ between $\mu^1,\mu^2$ and $Q^1,Q^2$. Thus existence and uniqueness extends to $\mathcal{M}^1(\mathcal{D}_{2T})$ and by immediate recurrence to $\mathcal{M}^1(\mathcal{D})$.

\end{proof}

The same Picard iteration used in the previous proof enables to prove the following Proposition.

\begin{Prop}
There is existence and uniqueness in law of solutions to the SDE \eqref{nonlinear mean field SDE}.
\end{Prop}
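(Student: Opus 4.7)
The plan is to run the same Picard iteration employed in the proof of Proposition \ref{nonlinear mean field martingale problem proposition}, but now at the level of stochastic processes rather than their laws. Fix a random variable $Y_0 = (r_0, Z_0)$ with distribution $\nu_0$ (so that $r_0$ is uniform on $\T$), choose any $Q^0 \in \calM^1(\calD)$ satisfying \ref{h1}--\ref{h2}, and define inductively $Y^{n+1}$ as the solution of the \emph{linear} SDE associated with $Q^n$, starting from $Y_0$, where $Q^n \in \calM^1(\calD)$ denotes the law of $Y^n$. Existence and pathwise uniqueness of $Y^{n+1}$ are guaranteed by Proposition \ref{linear mean field martingale problem proposition}, which also ensures that $Q^{n+1}$ inherits \ref{h1}--\ref{h2}.

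The key step is to couple successive iterates $Y^n, Y^{n+1}$ exactly as in Step~1 of the proof of Proposition \ref{nonlinear mean field martingale problem proposition}. The axial coordinate $r_0$ is frozen and common to both, the Poisson random measures $\calN^{Q^{n-1}, r_0}$ and $\calN^{Q^n, r_0}$ both have time intensity $\bar\gamma$, so they can be built with common jump times $(T_k)_{k\geq 1}$, and the velocity updates $(V^n_{T_k}, V^{n+1}_{T_k})$ can be drawn from the optimal coupling of $w^{Q^{n-1}_{T_k}, r_0}$ and $w^{Q^n_{T_k}, r_0}$. The computation leading to the contraction estimate \eqref{bound3 solid} then yields, for some $T>0$ small enough,
$$
\E\bigl[\|Z^{n+1} - Z^n\|_{\infty, T}\bigr] \leq c_T \, \calS\calW_{\infty, T}(Q^n, Q^{n-1}) \leq c_T \, \E\bigl[\|Z^n - Z^{n-1}\|_{\infty, T}\bigr],
$$
with $c_T < 1$, where the second inequality follows from Definition \ref{sliced wasserstein} applied to the coupling $(r_0, Z^n, Z^{n-1})$. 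Therefore $(Z^n)_n$ is Cauchy in $L^1(\Omega; (\calD_T, \|\cdot\|_{\infty, T}))$, converges almost surely along a subsequence to a limit $Z$, and passing to the limit in the linear SDE driving $Y^{n+1}$ shows that $Y := (r_0, Z)$ solves \eqref{nonlinear mean field SDE}. Extension from $[0,T]$ to $\R_+$ is obtained by concatenation as in the last paragraph of the proof of Proposition \ref{nonlinear mean field martingale problem proposition}.

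For uniqueness in law, let $Y$ be any solution of \eqref{nonlinear mean field SDE} and denote its law by $\mu$. Itô's formula applied to $\psi(Y_t)$ for $\psi \in C^1_b(\T\times E)$ shows that $\mu$ solves the nonlinear martingale problem \eqref{nonlinear mean field martingale problem} starting at $\nu_0$, so Proposition \ref{nonlinear mean field martingale problem proposition} forces $\mu$ to be the unique such measure; then \eqref{nonlinear mean field SDE} reduces to the linear SDE associated with this prescribed $\mu$, which admits a unique-in-law solution by Proposition \ref{linear mean field martingale problem proposition}. The main obstacle is the careful realization on a common probability space of the successive Poisson measures $\calN^{Q^n, r_0}$ so that the optimal velocity coupling can be enforced at every jump time; this is exactly the construction already carried out at jump times in Step~1 of the proof of Proposition \ref{nonlinear mean field martingale problem proposition}, and once it is in place the entire analytic content is provided by the contraction estimate \eqref{bound3 solid}.
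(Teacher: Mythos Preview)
Your proposal is correct and follows exactly the route the paper indicates: the paper gives no proof beyond the single sentence ``The same Picard iteration used in the previous proof enables to prove the following Proposition,'' and you have spelled out precisely that iteration at the process level, reusing the coupling of jump times and optimally coupled marks from Step~1 of the proof of Proposition~\ref{nonlinear mean field martingale problem proposition} together with the contraction estimate \eqref{bound3 solid}. Your uniqueness argument via reduction to the nonlinear martingale problem (Proposition~\ref{nonlinear mean field martingale problem proposition}) and then to the linear SDE (Proposition~\ref{linear mean field martingale problem proposition}) is the natural one and is implicit in the paper's setup.
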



\section{Energy transport}\label{Section 4}

We finish by studying the transport of energy in our model for a class of pinning potentials $U$ and proving Proposition \ref{Energy evolution}. The proof is in two parts. In the first one, we prove a diffusion equation for the energy associated with the limit measure $\mu_t$ at the timescale $t\ell^{-2}$. In the second part, we prove that the energy of the particle system converges to the energy associated with $\mu_t$ in that timescale.

\subsection{Diffusion equation for the limit measure}

First, let us start by proving that the symmetry \ref{H8} is preserved at any time $t>0$ for symmetric potentials.

\begin{Lem}\label{Symmetry}
Under \ref{H8}, if $U$ and $W$ are symmetric, then the symmetry $\mu_t(r,z) = \mu_t(r,-z)$ for any $(r,z)\in\T\times E$ is preserved at any later time $t>0$.
\end{Lem}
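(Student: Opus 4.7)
The plan is to exploit the uniqueness part of Proposition \ref{nonlinear mean field martingale problem proposition} by exhibiting a second solution to the nonlinear martingale problem with the same initial datum. Define the involution $S:\T\times E\to\T\times E$ by $S(r,x,v)=(r,-x,-v)$, extended trajectory-wise to $\calD$, and let $\bar\mu=S_\ast\mu\in\calM^1(\calD)$. Hypothesis \ref{H8} exactly says that $S_\ast\mu_0=\mu_0$, so $\bar\mu_0=\mu_0$.

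The core of the proof is the operator identity
\begin{equation*}
\mathcal{L}[\mu_s](\psi\circ S)(y)=\mathcal{L}[\bar\mu_s]\psi(Sy),\qquad \psi\in C^1_b(\T\times E),\ y\in\T\times E.
\end{equation*}
For the drift part $\mathcal{A}[\cdot]$, I would note that $\nabla_x(\psi\circ S)(r,x,v)=-(\nabla_x\psi)(r,-x,-v)$ and similarly for $\nabla_v$, and that the mean-field force transforms nicely under $S$: using the change of variables $z'\mapsto -z'$ in the pushforward, together with the fact that evenness of $W$ and $U$ implies that $\nabla W$ and $\nabla U$ are odd, one gets
\begin{equation*}
\int_{\T\times E}\Phi_\ell(r-r')\nabla W(-x-x')\,d\bar\mu_s(r',z')+\nabla U(-x)=-\Big(\int_{\T\times E}\Phi_\ell(r-r')\nabla W(x-x')\,d\mu_s(r',z')+\nabla U(x)\Big).
\end{equation*}
Plugging this into the definition of $\mathcal{A}$ produces matching minus signs that cancel. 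For the jump part $\mathcal{S}[\cdot]$, the identity reduces to a single change of variable $v'\mapsto -v'$ inside the integral against $d\bar\mu_s=d(S_\ast\mu_s)$, since the spatial factor $\Gamma_\ell(r-r')$ is untouched by $S$.

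Once the operator identity is established, the martingale property transfers immediately: if $Y$ has law $\mu$, then the canonical process under $\bar\mu$ can be represented as $SY$, and for any $\psi\in C^1_b(\T\times E)$ we have $\psi\circ S\in C^1_b(\T\times E)$, so that
\begin{equation*}
(\psi\circ S)(Y_t)-(\psi\circ S)(Y_0)-\int_0^t\mathcal{L}[\mu_s](\psi\circ S)(Y_s)\,ds
\end{equation*}
is a martingale under $\mu$ by \eqref{nonlinear mean field martingale problem}. Rewriting via $\mathcal{L}[\mu_s](\psi\circ S)(Y_s)=\mathcal{L}[\bar\mu_s]\psi(SY_s)$ shows that $\bar\mu$ solves the nonlinear martingale problem starting at $\bar\mu_0=\mu_0$. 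By the uniqueness statement in Proposition \ref{nonlinear mean field martingale problem proposition}, $\bar\mu=\mu$, which in terms of time marginals is exactly $\mu_t(r,z)=\mu_t(r,-z)$ for all $t>0$.

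The only real work is the bookkeeping with signs in verifying the operator identity; there is no analytic obstacle, since everything reduces to the oddness of $\nabla W,\nabla U$ and the invariance of $\Phi_\ell$ and $\Gamma_\ell$ under $r\mapsto r$.
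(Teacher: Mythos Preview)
Your proof is correct and follows essentially the same route as the paper: define the reflected measure $\bar\mu=S_\ast\mu$, verify the operator identity $\mathcal{L}[\mu_s](\psi\circ S)=\mathcal{L}[\bar\mu_s]\psi\circ S$ using the oddness of $\nabla W$ and $\nabla U$, deduce that $\bar\mu$ solves the same nonlinear martingale problem starting at $\mu_0$, and conclude by the uniqueness in Proposition~\ref{nonlinear mean field martingale problem proposition}. The paper's notation differs (it writes $\mu^*$, $\psi^*$, $Y^*$ instead of your pushforward formulation), but the argument is identical.
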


\begin{proof}
Let $(Y_t)_{t\geq 0}=(r_t, X_t,V_t)_{t\geq 0}$ be the canonical process associated with the solution $\mu$ to the nonlinear martingale problem \eqref{nonlinear mean field martingale problem}. Define  $\mu^*(r,z) = \mu(r,-z)$, $\psi^*(r,z) = \psi(r,-z)$ and $Y_t^* = (r_t,-Z_t)$ for simplicity. Then, we can rewrite
\begin{align*}
&\calA[\mu_s]\psi(Y_s) \\
    &\quad= V_s\cdot \nabla_v\psi(Y_s) -\left( \nabla U(X_s)+ \int_{\T\times E} \Phi_\ell(r_s-r')\nabla W(X_s-x') d\mu_s(r',z') \right) \cdot \nabla_x\psi(Y_s) \\
    &\quad= -V_s\cdot \nabla_v\psi^*(Y_s^*) +\left( \nabla U(X_s)+ \int_{\T\times E} \Phi_\ell(r_s-r')\nabla W(X_s+x') d\mu_s^*(r',z') \right) \cdot \nabla_x\psi^*(Y_s^*) \\
    &\quad= \calA[\mu^*_s]\psi^*(Y^*_s),
\end{align*}
using antisymmetry of $\nabla U$ and $\nabla W$ at the last line. Similarly,
$$\calS[\mu_s]\psi(Y_s) = \int_{\T\times E} \Gamma_\ell(r_s-r') \left(\psi^*(r_s,-X_s,v') - \psi^*(Y^*_s)\right) d\mu^*_s(r',z') = \calS[\mu^*_s]\psi^*(Y^*_s).$$
As a consequence, for any $\psi \in  C_b^1(\T\times E)$,
\begin{align*}
\psi^*(Y^*_t) -\psi^*(Y^*_0) + \int_0^t \calL[\mu^*_s]\psi^*(Y^*_s) ds &= \psi(Y_t) -\psi(Y_0) + \int_0^t \calL[\mu_s]\psi(Y_s) ds \\
&= M^\psi_t,
\end{align*}
where $M^\psi_t$ is a martingale. Since $Y^*$ has distribution $\mu^*$, $\mu^*$ is also solution to the nonlinear martingale problem \eqref{nonlinear mean field martingale problem} starting at $\mu_0^*=\mu_0$. By uniqueness, $\mu^* = \mu$, which implies the property we were looking for.
\end{proof}

As a consequence of this symmetry property, the integral $\int_{\R^d} x'd\mu_t(r',z')$ is null. In particular, if the interaction potential is harmonic $W(x) = |x|^2/2$, then the Vlasov equation \eqref{Vlasov + noise} is reduced to the simplified version \eqref{simple Vlasov}. We will always consider harmonic interaction potentials from now on. Let $Y$ be a solution to the nonlinear SDE \eqref{nonlinear mean field SDE} with spatial parameter $r\in\T$ and recall the notation from \eqref{definition energy of one particle}:
\begin{align}\label{definition energy of one particle 2}
\calE[\mu_t](Y_t) &= \frac{1}{2}|V_t|^2 + \frac{1}{2}\int_{\T\times E} \Phi_\ell(r-r') W(X_t-x')d\mu_t(r',z') + U(X_t) \nonumber \\
&= \frac{1}{2}|V_t|^2 + \frac{1}{4}|X_t|^2 + \frac{1}{4}\int_{\T\times E} \Phi_\ell(r-r')|x'|^2d\mu_t(r',z') + U(X_t),
\end{align}
since the term $\frac{1}{2}X_t \cdot \int_{\T\times E} \Phi_\ell(r-r')x'd\mu_t(r',z') =0$ by Lemma \ref{Symmetry}. Define also the local energy associated with the distribution $\mu$ of $Y$, given $r_0=r$, by
\begin{align}\label{definition energy mu}
\calE_t(r) &= \E\Big [\calE[\mu_t](Y_t) | r_0=r \Big] \nonumber \\
&= \int_E \left( \frac{1}{2}|v|^2 + \frac{1}{2}\int_{\T\times E} \Phi_\ell(r-r') W(x-x')d\mu_t(r',z') + U(x) \right) d\mu_t(r,x,v) \nonumber \\
&= \int_E \left( \frac{1}{2}|v|^2 + \frac{1}{4}\int_{\T\times E} \Phi_\ell(r-r') |x'|^2d\mu_t(r',z')  + \frac{1}{4} |x|^2 + U(x) \right) d\mu_t(r,x,v).
\end{align}

When $W$ is harmonic, the contribution of the neighboring parts near the axial position $r$ in \eqref{definition energy mu} is only given by the integral $\int_{\T\times E} \Phi_\ell(r-r') |x'|^2d\mu_t(r',z')$, which does not involve the variable $x$. Let us investigate in more detail the evolution of $\calE_t(r)$. By a straightforward calculation from \eqref{nonlinear mean field SDE}, we get that for any $g\in \calC^2(\T)$,
\begin{align} \label{diffusion energy limit 1}
&\int_{\T} \calE_{t\ell^{-2}}(r) g(r)dr - \int_{\T} \calE_0(r) g(r)dr  \nonumber\\
      &\hspace{1cm}=\E\Big [\calE[\mu_{t\ell^{-2}}](Y_{t\ell^{-2}}) g(r) \Big] - \E\Big [\calE[\mu_0](Y_0) g(r) \Big] \nonumber \\
      &\hspace{1cm}= - \int_0^{t\ell^{-2}} ds \int_{\T^2} g(r) \Phi_\ell(u)j_s^{r,r+u;a} drdu- \bar\gamma \int_0^{t\ell^{-2}} ds \int_{\T^2} g(r) \Gamma_\ell(u)j^{r,r+u;s}_s drdu,
\end{align}
where
\begin{align*}
j^{r,r+u;a}_s &= \frac{1}{2} \int_{E^2} (v + v')\cdot\nabla W(x-x') d\mu_s(r+u,z') d\mu_s(r,z) \\
&= \frac{1}{2} \int_{E} v \cdot x d\mu_s(r,z) - \frac{1}{2} \int_{E} v' \cdot x' d\mu_s(r+u,z') 
\end{align*}
is the mechanical contribution to the energy current and
$$j^{r,r+u;s}_s = \int_{E} \frac{1}{2}|v|^2 d\mu_s(r,z) - \int_{E} \frac{1}{2}|v|^2d\mu_s(r+u,z').$$
is the stochastic contribution to the energy current. Rearranging terms by an integration by parts in \eqref{diffusion energy limit 1}, since $r\in\T$ and $\Phi_\ell$ and $\Gamma_\ell$ are symmetric, we get
\begin{align}\label{diffusion energy limit 2}
&\int_{\T} \calE_{t\ell^{-2}}(r) g(r)dr - \int_{\T} \calE_0(r) g(r)dr \nonumber \\
      &\hspace{0.1cm}= \int_0^{t\ell^{-2}} ds \int_{\T\times E} \frac{1}{2} v\cdot x d\mu_s(r,z) \int_\T \Phi_\ell(u) \left(g(r+u) - g(r)\right) du \nonumber \\
             &\hspace{4cm}+ \bar\gamma \int_0^{t\ell^{-2}} ds  \int_{\T\times E} \frac{1}{2} |v|^2 d\mu_s(r,z)\int_{\T} \Gamma_\ell(u)\left(g(r+u)-  g(r)\right) du \nonumber \\
      &\hspace{0.1cm}= c_\phi \int_0^t ds \int_{\T\times E} \frac{1}{2}v\cdot x  g''(r) d\mu_{s\ell^{-2}}(r,z) + \bar\gamma c_\gamma \int_0^t ds \int_{\T\times E} \frac{1}{2}|v|^2  g''(r) d\mu_{s\ell^{-2}}(r,z) + O(t\ell),             
\end{align}
where the error term comes from a Taylor expansion of $g$ at the last line and we usied the uniform in time moment bound of Lemma \ref{bound speed}. The constant $c_\phi$ is equal to
$$c_\phi = \frac{1}{2}\int_{-1/2}^{1/2} u^2\phi(u) du,$$
and similarly for $c_\gamma$. Notice that, even if we have the symmetry property given by Lemma \ref{Symmetry}, this is not sufficient to prove that the hamiltonian contribution $\int x\cdot vd\mu_t(r,z)$ in \eqref{diffusion energy limit 2} vanishes. However, it is easy to prove from \eqref{nonlinear mean field SDE} that
\begin{align}\label{derivative x square}
&\int_{\T\times E} |x|^2 g''(r)d\mu_{t\ell^{-2}}(r,z) - \int_{\T\times E} |x|^2 g''(r)d\mu_0(r,z) \nonumber \\
     &\hspace{7cm}= \ell^{-2} \int_0^t ds \int_{\T\times E} \frac{1}{2} x\cdot v g''(r)d\mu_{s\ell^{-2}}(r,z),
\end{align}
and consequently, applying uniform in time moment bound of Lemma \ref{bound speed} to the left-hand side of \eqref{derivative x square}, we get an estimate of the right-hand side of \eqref{derivative x square}:
\begin{equation}\label{vanishing hamiltonian current} \int_0^t ds \int_{\T\times E} \frac{1}{2} x\cdot v g''(r)d\mu_{s\ell^{-2}}(r,z) = O(t\ell^2).\end{equation}

To prove that $\calE_{t\ell^{-2}}(r)$ evolves diffusively, it remains to close the equation \eqref{diffusion energy limit 2} by replacing the kinetic energy integral $\int_{E} \frac{1}{2}|v|^2 d\mu_{s\ell^{-2}}(r,z)$, by the energy $\calE_{t\ell^{-2}}(r)$. To do so, we prove a result of equipartition of energy in Lemma \ref{equipartition Lemma}. This result states that long time integrals of kinetic energy can be actually replaced by a fraction of time integrals of the total energy. We prove it only for specific pinning potentials for which the identity
$$x\cdot \nabla U(x) = 2 U(x)$$
holds. For a positive continuously differentiable function $U$, it is well known that satisfying this identity is equivalent to be homogeneous of degree $2$. In particular, $U$ is such that $U(x) = |x|^2\psi(x/|x|)$, where $\psi\in C^2(\Sph^{d-1},\R^*_{+})$. It is straightforward to check that $U$ satisfies \ref{H1} in this setting. We furthermore require $\psi$ to be symmetric for Lemma \ref{Symmetry} to hold.

\begin{Lem}\label{equipartition Lemma}
Let $U(x) = |x|^2\psi(x/|x|)$, where $\psi\in C^2(\Sph^{d-1},\R^*_{+})$ is a symmetric function. For any $G\in\calC^2(\T)$, we have the following time equipartition of energy.
\begin{equation}\label{equipartition result} \int_0^t ds \int_{\T\times E} \frac{1}{2}|v|^2  G(r) d\mu_{s\ell^{-2}}(r,z) = \frac{1}{2} \int_0^t ds\int_\T \calE_{s\ell^{-2}}(r) G(r)dr + O((1+\bar\gamma)t\ell^2).\end{equation}
\end{Lem}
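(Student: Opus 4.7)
My strategy would be a stochastic virial identity applied to the nonlinear martingale problem \eqref{nonlinear mean field martingale problem}. The key observation is that the quantity whose time integral we want to compute, namely $\frac{1}{2}|v|^2 - \frac{1}{2}|x|^2 - U(x)$, is (up to a jump correction) the generator's action on the test function $\Psi(r,x,v) = \frac{1}{2}x\cdot v\,G(r)$. So I would apply the martingale problem to $\Psi$, take expectations, and exploit that the boundary terms stay bounded uniformly in time by Lemma \ref{bound speed}, whereas rescaling time by $\ell^{-2}$ turns that $O(1)$ bound into $O(\ell^2)$ for the rescaled integrals. The growth of $\Psi$ is only quadratic and we have uniform second-moment bounds, so the restriction $\psi\in C_b^1$ in \eqref{nonlinear mean field martingale problem} is bypassed by a standard truncation argument.

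The heart of the proof is the computation of $\calL[\mu_s]\Psi$. For the drift, since $W(x)=|x|^2/2$ is harmonic and the $r$-marginal of $\mu_s$ is the Lebesgue measure on $\T$, the interaction force reduces to
\[
\int_{\T\times E}\Phi_\ell(r-r')(x-x')\,d\mu_s(r',z') \;=\; x,
\]
because $\int_\T \Phi_\ell(r-r')dr'=1$ and $\int_E x'\, d\mu_s(r',z')/dr'=0$ by Lemma \ref{Symmetry}. Using then the hypothesis $x\cdot\nabla U(x)=2U(x)$, one gets
\[
\calA[\mu_s]\Psi \;=\; \Bigl(\tfrac{1}{2}|v|^2-\tfrac{1}{2}|x|^2 - U(x)\Bigr)G(r).
\]
For the jump part, Lemma \ref{Symmetry} makes the $v$-integral of $\mu_s$ vanish and the uniform $r$-marginal gives $\int\Gamma_\ell(r-r')d\mu_s=1$, so
\[
\calS[\mu_s]\Psi \;=\; -\tfrac{1}{2}x\cdot v\, G(r).
\]

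Taking expectations in the martingale problem, rescaling time $s\mapsto s\ell^{-2}$ and multiplying by $\ell^2$, the uniform bound on $\int \Psi\,d\mu_t$ coming from Lemma \ref{bound speed} yields
\[
\int_0^t\!\!\int_{\T\times E}\Bigl[\tfrac{1}{2}|v|^2-\tfrac{1}{2}|x|^2-U(x)\Bigr]G(r)\,d\mu_{s\ell^{-2}}\,ds \;-\; \frac{\bar\gamma}{2}\int_0^t\!\!\int_{\T\times E} x\cdot v\,G(r)\,d\mu_{s\ell^{-2}}\,ds \;=\; O(\ell^2).
\]
The residual $\bar\gamma$-term is eliminated by repeating the same argument with the auxiliary test function $\Psi_2 = \tfrac{1}{2}|x|^2 G(r)$, for which $\calL[\mu_s]\Psi_2 = x\cdot v\,G(r)$; this gives $\int_0^t\!\int x\cdot v\,G\,d\mu_{s\ell^{-2}}ds = O(\ell^2)$. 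Hence the full left-hand side above is $O((1+\bar\gamma)\ell^2)$.

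It remains to identify this expression with the claim. Unfolding \eqref{definition energy mu} with $W$ harmonic, the cross term $\int\Phi_\ell(r-r')x\cdot x'\,d\mu_s$ vanishes by Lemma \ref{Symmetry}, leaving a neighbour contribution $\tfrac{1}{4}\int\Phi_\ell(r-r')|x'|^2d\mu_s$. Integrating against $G(r)$ and swapping the order of integration (using the uniformity of the $r$-marginal), a second-order Taylor expansion of $G$ around $r'$ together with the symmetry of $\Phi_\ell$ kills the first-order term and, via Lemma \ref{bound speed}, produces an $O(\ell^2)$ error. This gives
\[
\int_\T \calE_{s}(r)G(r)\,dr \;=\; \int_{\T\times E}\Bigl[\tfrac{1}{2}|v|^2+\tfrac{1}{2}|x|^2+U(x)\Bigr]G(r)\,d\mu_s + O(\ell^2),
\]
and subtracting half of this from $\int \tfrac{1}{2}|v|^2 G\,d\mu_s$ matches (up to a factor $\tfrac12$ and an $O(\ell^2)$ term) the virial expression we already controlled. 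Integrating in time yields \eqref{equipartition result}.

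The main obstacles are essentially bookkeeping: justifying the martingale identity for the unbounded $\Psi$ (routine truncation using the uniform second-moment bound from Lemma \ref{bound speed}); cleanly using Lemma \ref{Symmetry} at two distinct places (vanishing of $\int x'\,d\mu_s$ in the mean-field force and of $\int v'\,d\mu_s$ in the jump operator); and tracking the harmless $\ell^2$ errors arising from the Taylor expansion of $G$ against the kernel $\Phi_\ell$. The homogeneity identity $x\cdot\nabla U(x)=2U(x)$ is the sole algebraic input specific to the class of potentials considered, and it is exactly what is needed to make the quadratic virial identity close into an equipartition statement.
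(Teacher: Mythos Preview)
Your proof is correct and follows essentially the same virial-identity strategy as the paper: both apply the dynamics to the observable $x\cdot v\,G(r)$, exploit the homogeneity $x\cdot\nabla U=2U$ together with the symmetry Lemma~\ref{Symmetry}, and close via the uniform second-moment bound of Lemma~\ref{bound speed} after rescaling time by $\ell^{-2}$. The only organizational difference is that you compute $\mathcal L[\mu_s]\Psi$ directly (using symmetry to reduce the force to $x+\nabla U(x)$ and the jump to $-\tfrac12 x\cdot v\,G$ before any Taylor expansion) and dispose of the residual $\bar\gamma\int x\cdot v\,G\,d\mu_{s\ell^{-2}}$ via the auxiliary test function $\tfrac12|x|^2 G$, which is exactly the computation \eqref{derivative x square}--\eqref{vanishing hamiltonian current} that the paper carries out just above the lemma.
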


\begin{proof}
Let $(Y_s)_{s\geq 0} = (r_0,X_s,V_s)_{t\geq 0}$ be a solution to the nonlinear SDE \eqref{nonlinear mean field SDE}. We compute for any $T>0$:
\begin{align*}
&X_T\cdot V_T  - X_0\cdot V_0 \\
     &\quad = \int_0^T |V_s|^2ds - \int_0^T X_s \cdot \left(\nabla U(X_s)+X_s\right) ds + \int_0^T\int_{\R^d} X_s \cdot \left( v' - V_{s_-}\right)d\calN^{\mu,r_0}(s,v').
\end{align*}
Using that $x\cdot \nabla U(x) = 2U(x)$, we can introduce the potential $U$ at the last line. Then, using \eqref{definition energy of one particle 2}, we can introduce $\calE[\mu_s](Y_s)$ and get
\begin{align*}
&X_T\cdot V_T  - X_0\cdot V_0 \\
     &\quad = 4 \int_0^T \frac{1}{2} |V_s|^2ds  - 2 \int_{0}^{T} \calE[\mu_s](Y_s) ds + \int_{0}^{T} ds \left(\frac{1}{2}\int_{\T\times E}\Phi_\ell(r_0-r')|x'|^2d\mu_s(r',z') - \frac{1}{2}|X_s|^2\right) \\
           &\hspace{9cm}+ \int_0^T\int_{\R^d} X_s \cdot \left( v' - V_{s_-}\right)d\calN^{\mu,r_0}(s,v').
\end{align*}
Multiplying by $G(r_0)$ on both sides for $G\in\calC^2(\T)$ and taking expectations gives:
\begin{align*}
&\int_{\T\times E} x\cdot v G(r) d\mu_T(r,z) - \int_{\T\times E} x\cdot v G(r) d\mu_0(r,z) \\
    &= 4\int_0^T ds \int_{\T\times E} \frac{1}{2}|v|^2 G(r) d\mu_s(r,z)  - 2 \int_{0}^{T} ds \int_\T\calE_s(r) G(r) dr \\
    &\hspace{1cm}+ \ell^2 c_\phi \int_{0}^{T} ds \int_{\T\times E} \frac{1}{2} |x|^2 G''(r) d\mu_s(r,z) + \ell^2 \bar\gamma c_\gamma \int_{0}^{T} ds \int_{\T\times E} x\cdot v G''(r) d\mu_s(r,z) + o(T\ell^2),
\end{align*}
where we used a Taylor expansion for the two terms at the last line. Applying this result for $T=t\ell^{-2}$, and rearranging terms, we get:
\begin{align*}
&\int_0^t ds \int_{\T\times E} \frac{1}{2}|v|^2 G(r) d\mu_{s\ell^{-2}}(r,z)  - \frac{1}{2} \int_{0}^{t} ds \int_\T\calE_{s\ell^{-2}}(r) G(r) dr \\
    &\hspace{0.2cm}=  \ell^2 \frac{1}{4}\left( \int_{\T\times E} x\cdot v G(r) d\mu_{t\ell^{-2}}(r,z) - \int_{\T\times E} x\cdot v G(r) d\mu_0(r,z) \right)\\
    &\hspace{0.4cm}+ \ell^2 \frac{c_\phi}{4} \int_{0}^{t} ds \int_{\T\times E} \frac{1}{2} |x|^2 G''(r) d\mu_{s\ell^{-2}}(r,z) + \ell^2 \bar\gamma \frac{c_\gamma}{4} \int_{0}^{t} ds \int_{\T\times E} x\cdot v G''(r) d\mu_{s\ell^{-2}}(r,z) + o(t\ell^2).
\end{align*}
Applying the uniform moment bound in Lemma \ref{bound speed}, we deduce that the right-hand side is a $O((1+\bar\gamma)\ell^2)$ and this concludes the proof.
\end{proof}

Combining the equipartition result \eqref{equipartition result} and the control on the Hamiltonian current \eqref{vanishing hamiltonian current} in \eqref{diffusion energy limit 2}, we finally get that $\calE_{t\ell^{-2}}(r)$ evolves diffusively:
\begin{align}\label{diffusion energy limit 3}
&\int_{\T} \calE_{t\ell^{-2}}(r) g(r)dr - \int_{\T} \calE_0(r) g(r)dr = \bar\gamma \frac{c_\gamma}{2} \int_0^t ds \int_{\T} \calE_{s\ell^{-2}}(r) g''(r)dr + O(t\ell + t\bar\gamma\ell^2),
\end{align}
for any $g\in\calC^4(\T)$.

\subsection{Convergence of the particle system's energy}

We deduce Proposition \ref{Energy evolution} from the convergence of the microscopic energy to $\calE_t(r)$ proven in the next lemma. Let us fix $\bar\gamma$ and let $c(N,\ell,t)$ denote the constant appearing in Theorem \ref{Convergence result}, \textit{i.e}
$$c(N,\ell,t) = K_1 \left((N\epsilon_N)^{-\frac{1}{4(d+1)}} + \frac{\epsilon_N^{1/2}}{\ell^{1/2}} \right)\e^{K_2t}.$$
Recall from \eqref{definition energy particle system} the definition of the microscopic energy
$$\calE^i_t = \calE[\mu^N_t](Y^i_t) = \frac{1}{2}|V_t^i|^2 + \frac{1}{4}\int_{\T\times E} \Phi_\ell\left(\frac{i}{N}-r'\right)|X^i-x'|^2d\mu^N_t(r',z') + U(X^i_t).$$

\begin{Lem}\label{Convergence of Energy}
Let $G\in \calC^1(\T)$. Under hypothesis \ref{H9}, there exist positive constants $K,K'$ such that for any time $T>0$ and a constant $M>0$ large enough
$$\E\left[\left |\frac{1}{N}\sum_{i=1}^N \calE^i_T G\left(\frac{i}{N}\right) - \int_\T \calE_T(r) G(r) dr\right |\right] \leq M^2c(N,\ell,T) + \frac{K}{M^b}\e^{K'T} + O(\ell^2).$$
\end{Lem}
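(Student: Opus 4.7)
The essential difficulty is that the energy observable $\calE[\mu](r,z)$ contains the unbounded quadratic terms $\tfrac12|v|^2$, $U(x)\asymp|x|^2$ and $\tfrac14\int\Phi_\ell(r-r')|x-x'|^2\,d\mu(r',z')$, so it is not a bounded Lipschitz function of $(r,z)$ and cannot be tested directly against the Wasserstein-$1$ estimate of Theorem \ref{Convergence result}. The plan is a standard truncation at a free parameter $M>0$, balancing a truncation error (controlled by the $(2+2b)$-moments of \ref{H9}) against a Wasserstein bound applied to a bounded Lipschitz surrogate $\calE^M$.

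Precisely, I introduce a smooth $1$-Lipschitz cutoff $\chi_M:\R\to\R$ with $\chi_M(s)=s$ for $|s|\leq M$ and $|\chi_M|\leq 2M$, and define $\calE^M[\mu](r,z)$ by replacing every occurrence of $v, x$ and $x'$ inside $\calE[\mu](r,z)$ by $\chi_M$ applied componentwise. This observable is bounded by $CM^2$ and Lipschitz in $(x,v)$ with constant of order $M$. I then split
\[
\frac{1}{N}\sum_{i=1}^N \calE^i_T\, G\Bigl(\frac{i}{N}\Bigr) - \int_\T \calE_T(r)G(r)\,dr = (I)+(II)+(III),
\]
where $(I)=\tfrac{1}{N}\sum_i(\calE^i_T-\calE^{M,i}_T)G(i/N)$ and $(III)=\int_\T(\calE^M_T-\calE_T)G$ are the truncation errors on the particle and limit sides, and $(II)=\tfrac{1}{N}\sum_i\calE^{M,i}_T G(i/N)-\int_\T\calE^M_T G$ is the truncated difference.

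For $(I)$ and $(III)$, the integrands in $\calE-\calE^M$ are pointwise dominated by quantities such as $|v|^2\1_{|v|>M}$, $|x|^2\1_{|x|>M}$ and the analogous kernel contribution. Markov's inequality combined with the $(2+2b)$-moment of $\mu_T$ (uniformly controlled as in Lemma \ref{bound speed}) handles $(III)$. For $(I)$, I propagate the quantity $\tfrac{1}{N}\sum_i\E[|V^i_T|^{2+2b}+|X^i_T|^{2+2b}]$ along the dynamics \eqref{original system solid} by a Gronwall argument, using that the stochastic exchanges act as a permutation on $(V^i)_i$ (so the averaged moment is unchanged) and that the forces $\nabla U$, $\sum_k\phi_k\nabla W$ are Lipschitz, so that the Hamiltonian part yields at most exponential growth. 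This gives $(I), (III)\leq K\e^{K'T}/M^b$ in expectation.

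For $(II)$, I split it further as $\int\calE^M[\mu^N_T]G\,d(\mu^N_T-\mu_T)+\int(\calE^M[\mu^N_T]-\calE^M[\mu_T])G\,d\mu_T$; each piece amounts to testing a bounded Lipschitz observable of size $O(M^2)$ against $\mu^N_T-\mu_T$ (the second piece because the kernel $\Phi_\ell(r-r')|\chi_M(x)-\chi_M(x')|^2$ is itself a bounded Lipschitz function of $(r',z')$). Kantorovich--Rubinstein duality and Theorem \ref{Convergence result} then contribute $M^2\,c(N,\ell,T)$. The residual $O(\ell^2)$ comes from a Taylor expansion of the convolution $\Phi_\ell*\cdot$ when the interaction term $\tfrac14\int\Phi_\ell(r-r')|x-x'|^2\,d\mu_T$ is rewritten, via Lemma \ref{Symmetry}, as $\tfrac14|x|^2+\tfrac14\int\Phi_\ell(r-r')|x'|^2\,d\mu_T$, matching the form appearing in $\calE^i_T$. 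The main obstacle is that the $r$-Lipschitz constant of $\calE^M$ through $\Phi_\ell$ is of order $M^2/\ell^2$, so a crude application of $\calW_1$ would lose a factor $1/\ell^2$; this is avoided by working within the mesoscopic box coupling underlying Theorem \ref{Convergence result}, in which $r$-coordinates are already kept within $\epsilon_N<\ell$ of each other, letting the excess $1/\ell$ be absorbed into the already-present $\epsilon_N/\ell$ factor inside $c(N,\ell,T)$.
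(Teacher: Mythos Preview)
Your strategy---truncate at level $M$, use Kantorovich--Rubinstein for the bounded Lipschitz surrogate, and control the tails by propagating $(2+2b)$-moments via Gronwall---is exactly the paper's. The moment propagation is fine: since the stochastic collisions are genuine swaps, $\tfrac1N\sum_i|V^i|^{2+2b}$ is unchanged by them, and the Hamiltonian drift gives exponential growth as you say.

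The one substantive difference is the order of operations. The paper does \emph{not} truncate the full $\calE[\mu]$ and then wrestle with the $\Phi_\ell$-dependence. Instead it first exploits the harmonic $W$ and Lemma~\ref{Symmetry} to expand
\[
\tfrac14\int\Phi_\ell(r-r')|x-x'|^2\,d\nu
=\tfrac14|x|^2+\tfrac14\int\Phi_\ell(r-r')|x'|^2\,d\nu-\tfrac12\int\Phi_\ell(r-r')x\cdot x'\,d\nu,
\]
and then Taylor-expands the convolution $\int\Phi_\ell(r-r')|x'|^2$ against $G$ to produce the $O(\ell^2)$ and absorb that term into $H(z)=\tfrac12|v|^2+\tfrac12|x|^2+U(x)$. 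After this reduction, the main comparison is between $\int H(z)G(r)\,d\mu^N_T$ and $\int H(z)G(r)\,d\mu_T$, where the test function has \emph{no} $\Phi_\ell$ at all; the truncated $H^M G$ is then a genuine bounded Lipschitz function on $\T\times E$ and Theorem~\ref{Convergence result} applies as a black box. Only a residual cross term $\int\!\!\int\Phi_\ell(r-r')x\cdot x'G(r)\,d\mu^N_T d\mu^N_T$ survives, and it vanishes on the limit side by symmetry.

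Your route keeps the $\Phi_\ell$-kernel inside $\calE^M$ throughout, which forces you to go back inside the proof of Theorem~\ref{Convergence result} (the mesoscopic box coupling) rather than just invoke its statement. That is workable, but your Lipschitz count is off: the $r$-Lipschitz constant of $(r,z)\mapsto\int\Phi_\ell(r-r')|\chi_M(x)-\chi_M(x')|^2\,d\nu(r',z')$ is $O(M^2/\ell)$, not $O(M^2/\ell^2)$, because $|\Phi_\ell'|\le C/\ell^2$ is integrated against a measure whose $r'$-marginal puts only $O(\ell)$ mass on the support of $\Phi_\ell(r-\cdot)$. With the correct constant, the box coupling (which confines $|r'-r''|\le\epsilon_N$) indeed yields $M^2\epsilon_N/\ell$, which is dominated by $M^2 c(N,\ell,T)$; with your stated $M^2/\ell^2$ you would be left with an unabsorbed $1/\ell$. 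So the argument can be completed, but the paper's expand-first approach is cleaner and lets you cite Theorem~\ref{Convergence result} directly for the main term.
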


In this result, $b$ is the constant appearing in the moment hypothesis \ref{H9}. Taking $T=t\ell^{-2}$, $M= c(N,\ell,T)^{-1/3}$ and $\epsilon_N = \ell^{\frac{2d+2}{2d+3}} N^{-\frac{1}{2d+3}}$ gives the result stated in Proposition \ref{Energy evolution}. In particular, one can find a constant $c>0$ and choose $\ell = \ell(N) = c(\log N)^{-1/2}$ so that \eqref{limit with ell(N)} holds.

\begin{proof}
First, let us simplify the problem. In the definition \eqref{definition energy mu} of the energy $\calE_T(r)$, we expect that the approximation
$$\frac{1}{4} \int_{\T}\Phi_\ell(r-r') |x'|^2  d\mu_T(r',z') \approx \frac{1}{4}|x|^2$$
is true when $\ell$ is small. We can actually make this approximation rigorous when we integrate $\calE_T(r) G(r)$:
\begin{align*}
\int_{\T} \calE_T(r) G(r) dr &= \int_{\T\times E}\left(\left( \frac{1}{2}|v|^2 + \frac{1}{4} |x|^2 + U(x)\right)G(r) + \frac{1}{4}|x|^2 \int_{\T}\Phi_\ell(u)  G(r+u) du \right) d\mu_T(r,z) \\
&= \int_{\T\times E} \left( \frac{1}{2}|v|^2 + \frac{1}{2} |x|^2 + U(x) \right) G(r) d\mu_T(r,z) + O(\ell^2),
\end{align*}
by a Taylor expansion of $G$. By the same procedure for the microscopic system, we have
\begin{align*}
\frac{1}{N}\sum_{i=1}^N \calE^i_T G\left(\frac{i}{N}\right) &= \int_{\T\times E} \left( \frac{1}{2}|v|^2 + \frac{1}{2}|x|^2 + U(x) \right)G(r)d\mu^N_T(r,z) \\
     &\hspace{2cm}- \frac{1}{2}\int_{(\T\times E)^2} x\cdot x' G(r) d\mu^N_T(r,z)d\mu^N_T(r',z') + \delta_\ell,
\end{align*}
where $\delta_\ell$ is such that $\E[|\delta_\ell |] = O(\ell^2)$ by conservation of energy. Notice that we have a non-vanishing extra term for the microscopic version. We introduce
$$H(x,v) = \frac{1}{2}|v|^2 + \frac{1}{2}|x|^2 + U(x)$$
for notational convenience. The proof of the lemma now boils down to bound the following two terms
\begin{equation}\label{convergence noninteracting terms} \E\left[ \left| \int_{\T\times E} H(x,v) G(r) d\mu^N_T(r,z) -  \int_{\T\times E} H(x,v) G(r) d\mu_T(r,z) \right|\right] \end{equation}
and
\begin{equation}\label{convergence interacting terms} \E\left[ \left| \int_{(\T\times E)^2} x\cdot x' G(r) d\mu^N_T(r,z)d\mu^N_T(r',z') -  \int_{(\T\times E)^2} x\cdot x'G(r) d\mu^N_T(r,z) d\mu_T(r',z') \right|\right],\end{equation}
since the second integral in \eqref{convergence interacting terms} is null by Lemma \ref{Symmetry}.

For that, we are going to use Theorem \ref{Convergence result}. However, since the Wasserstein $\calW_1$ distance only enables to control differences of integrals with respect to Lipschitz functions with Lipschitz constant less than $1$, we have to cut the large values of $H$ and $x\cdot x'$. To control \eqref{convergence noninteracting terms}, we therefore introduce a sequence of functions $H^M$ depending on a parameter $M$ such that $H^M$ approximates $H$:
$$H^M(z) := H(z) \1_{H(z)<M} + M\left(2-\exp{\left(1-\frac{H(z)}{M}\right)} \right)\1_{H(z)\geq M}.$$
With this choice, it is easy to check that $H^M/M^2$ is a Lipschitz continuous function such that
$$\left\| \frac{1}{M^2}H^M \right \|_\infty \leq \frac{2}{M}, \hspace{1cm} Lip\left (\frac{1}{M^2}H^M \right) \leq \frac{1}{M^2}.$$
In particular, 
$$Lip\left (\frac{1}{M^2}H^M G\right) \leq Lip\left (\frac{1}{M^2}H^M \right) \|G\|_\infty + \left\| \frac{1}{M^2}H^M \right \|_\infty Lip(G) \leq \frac{1}{M^2}\|G\|_\infty +\frac{2}{M} Lip(G),$$
which is less than one for $M$ large enough. Introducing cross terms in \eqref{convergence noninteracting terms}, we now bound it by
\begin{align}\label{convergence energy 1}
&\E\left[\left | \int_{\T\times E} H(z) G(r)d\mu^N_T(r,z) - \int_{\T\times E} H(z) G(r) d\mu_T(r,z)\right |\right] \nonumber\\
    &\hspace{1.3cm} \leq \| G\|_\infty \E\left[\int_{\T\times E} \left| H(z) -H^M(z)\right | d\mu^N_T(r,z) \right] + \| G\|_\infty \int_{\T\times E} \left |  H(z) -H^M(z) \right | d\mu_T(r,z) \nonumber \\
         &\hspace{2cm} + M^2 \E\left[\left | \int_{\T\times E} \frac{1}{M^2}H^M(z) G(r)d\mu^N_T(r,z) - \int_{\T\times E} \frac{1}{M^2}H^M(z) G(r) d\mu_T(r,z)\right |\right].
\end{align}
We can bound the last term by $M^2 \E[\calW(\mu^N_t,\mu_t)]$ and get
\begin{equation} \label{convergence energy 3}
\E\left[\left | \int_{\T\times E} H^M(z) G(r)d\mu^N_T(r,z) - \int_{\T\times E} H^M(z) G(r) d\mu_T(r,z)\right |\right] \leq M^2 c(N,\ell,T).
\end{equation}
It just remains to control the first two terms in \eqref{convergence energy 1} which correspond to the cut parts. The first term in \eqref{convergence energy 1} can be bounded by
\begin{align}\label{convergence energy 2}
\E\left[\int_{\T\times E} \left| H(z) -H^M(z)\right | d\mu^N_T(r,z) \right]  &\leq \E\left[\int_{\T\times E}  H(z)\1_{H(z)\geq M} d\mu^N_T(r,z) \right] \nonumber \\
&\leq \frac{1}{M^b}\E\left[\frac{1}{N}\sum_{i=1}^N H(X^i_T,V^i_T)^{1+b}\right],
\end{align}
by a Markov inequality, with the view of using the moment hypothesis \ref{H9}. We now derive a moment type bound at time $T$. Using the dynamics \eqref{original system solid}, and the fact that $\nabla_vH = v$ and $\nabla_x H = x + \nabla U(x)$, we compute the last term:
\begin{align} \label{convergence energy 4}
&\E\left[\frac{1}{N}\sum_{i=1}^N H(X^i_T,V^i_T)^{1+b}\right] - \E\left[\frac{1}{N}\sum_{i=1}^N H(X^i_0,V^i_0)^{1+b}\right] \nonumber \\
&\hspace{3cm}= (1+b) \int_0^T \E\left[\frac{1}{N}\sum_{i=1}^N \sum_{k=-\ell N}^{\ell N} \phi_k  V^i\cdot X^{i+k} H(X^i_s,V^i_s)^{b}\right] \nonumber \\
        &\hspace{3.5cm} + \bar\gamma \int_0^T \E\left[\frac{1}{N}\sum_{i=1}^N \sum_{k=-\ell N}^{\ell N} \gamma_k \left(H(X^i_s,V_s^{i+k})^{1+b} -  H(X^i_s,V^i_s)^{1+b} \right)\right] ds,
 \end{align}
the first term being the hamiltonian contribution and the second one the stochastic contribution. We then bound the product
\begin{align*}
V^i\cdot X^{i+k} H(X^i_s,V^i_s)^{b} &\leq \left(\frac{1}{2}|V^i|^2 + \frac{1}{2}|X^{i+k}|^2 \right) H(X^i_s,V^i_s)^{b} \\
&\leq H(X^i_s,V^i_s)^{1+b} + H(X^{i+k},V^{i+k})H(X^i_s,V^i_s)^{b} \\
&\leq KH(X^i_s,V^i_s)^{1+b} + K'H(X^{i+k}_s,V^{i+k}_s)^{1+b},
\end{align*}
where $K$ and $K'$ are two constants. We bounded directly the two terms by $H$ at the second line, and then used Young's inequality at the third line. Summing this inequality over $i$ and $k$, we obtain that the first term in the right-hand side of \eqref{convergence energy 4} is thus bounded by
\begin{equation}\label{convergence energy 5}\int_0^T \E\left[\frac{1}{N}\sum_{i=1}^N \sum_{k=-\ell N}^{\ell N} \phi_k  V^i\cdot X^{i+k} H(X^i_s,V^i_s)^{b}\right]\leq K \int_0^T \E\left[\frac{1}{N}\sum_{i=1}^N H(X^i_s,V^i_s)^{1+b}\right],\end{equation}
for some constant $K$. The same bound can be obtained for the second term in the right-hand side of \eqref{convergence energy 4}. All in all, By Gronwall's inequality and \ref{H9}, we get:
$$ \E\left[\frac{1}{N}\sum_{i=1}^N H(X^i_T,V^i_T)^{1+\epsilon}\right] \leq K \e^{K'T}.$$
Eventually, the term \eqref{convergence energy 2} corresponding to the large energies is bounded by
$$\E\left[\int_{\T\times E} \left| H(z) -H^M(z)\right | d\mu^N_T(r,z) \right] \leq \frac{K}{M^b}\e^{K'T}.$$
The same bound can be obtained for the second term in \eqref{convergence energy 1}. Combining both bounds together with \eqref{convergence energy 3} in \eqref{convergence energy 1} gives the bound we wanted for the convergence of $H$ in \eqref{convergence noninteracting terms}:
$$\E\left[\left | \int_{\T\times E} H(z) G(r)d\mu^N_T(r,z) - \int_{\T\times E} H(z) G(r) d\mu_T(r,z)\right |\right] \leq M^2c(N,\ell,T) + \frac{K}{M^b}\e^{K'T}.$$
\eqref{convergence interacting terms} can be controlled in the same spirit, and this concludes the proof.

\end{proof}


\appendix


\section{Convergence of some empirical measure for the Wasserstein distance}\label{Appendix A}

For any $1\leq j \leq \epsilon_N^{-1}$, let $(\bar Y^i)_{i\in NB_j}$ be a family of independent random variables on $B_j\times\R^d\times\R^d$ with law $\mu^j(r,z) = \epsilon_N^{-1}\mu(r,z)\1_{r\in B_j}$, and $\bar Y^i = (\bar r^i, \bar X^i, \bar V^i)$. In this section, we prove that the empirical measure $\bar \mu^{N,j} = 1/(N\epsilon_N)\sum\delta_{\bar Y_i}$ satisfies a law of large numbers in some sense. More precisely, we prove that

\begin{Prop}
If $\mu$ has a finite moment of order $2$ in the sense that $\int_{\T\times E} |z|^2 d\mu(r,z) <\infty$. Then there exists some positive constant $K$ such that
$$\epsilon_N \sum_{j=1}^{\epsilon_N^{-1}} \E\left[\calW_1(\bar \mu^{N,j}, \mu^j)\right] \leq K (N\epsilon_N)^{-\frac{1}{4(d+1)}} + K\epsilon_N.$$
\end{Prop}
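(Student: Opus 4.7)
The proposition is a law-of-large-numbers statement for the empirical Wasserstein distance associated with the independent samples $(\bar Y^i)_{i\in NB_j}$. My plan is a standard two-step reduction: separate the low-dimensional, bounded coordinate $r\in B_j$ from the unbounded coordinate $z\in E$, then handle the $z$-empirical measure by a truncation-and-covering argument whose rate is deliberately crude so that only a second moment of $\mu$ is required.

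\emph{Step 1: reducing to the $z$-coordinate.} Let $\nu^j$ denote the marginal of $\mu^j$ on $E$, and $\bar\nu^{N,j}=(N\epsilon_N)^{-1}\sum_{i\in NB_j}\delta_{\bar Z^i}$ the associated empirical measure. I would exploit the fact that $\mathrm{diam}(B_j)=\epsilon_N$ to build an explicit coupling of $\bar\mu^{N,j}$ and $\mu^j$: start from an optimal $\calW_1$-coupling $\pi^z$ of $\bar\nu^{N,j}$ and $\nu^j$ on $E$, then pair each atom $(\bar r^i,\bar Z^i)$ with a target $(r'',z'')$ where $z''$ follows $\pi^z$ conditionally on $\bar Z^i$ and $r''$ is independently drawn from the conditional law $\mu^j(\cdot\mid z'')$ on $B_j$. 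The second marginal is $\mu^j$, and the transportation cost $|\bar r^i-r''|+|\bar Z^i-z''|$ is bounded by $\epsilon_N+|\bar Z^i-z''|$, giving
$$\E[\calW_1(\bar\mu^{N,j},\mu^j)]\le \epsilon_N+\E[\calW_1(\bar\nu^{N,j},\nu^j)].$$

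\emph{Step 2: empirical Wasserstein on $E$ via truncation.} The measure $\bar\nu^{N,j}$ is the empirical of $n:=N\epsilon_N$ i.i.d.\ samples of $\nu^j$ on $\R^{2d}$. I would truncate at a scale $M$: the Wasserstein tail $\calW_1(\nu^j,\nu^j|_{\{|z|\le M\}})$ is bounded by $M^{-1}\int|z|^2\,d\nu^j$ by Markov, and the same holds in expectation for $\bar\nu^{N,j}$. On the ball of radius $M$ in $\R^{2d}$, a Dudley/covering estimate (or Fournier--Guillin on bounded support) yields a bound of the form $CM\,n^{-1/D}$ for an effective dimension $D$. Balancing $M\,n^{-1/D}$ against $M^{-1}\int|z|^2d\nu^j$ by choosing $M=(\int|z|^2d\nu^j)^{1/2}n^{1/(2D)}$ yields
$$\E[\calW_1(\bar\nu^{N,j},\nu^j)]\le C\Bigl(\textstyle\int_E|z|^2d\nu^j\Bigr)^{1/2}n^{-1/(2D)}.$$
Taking $D=2(d+1)$ matches the target rate $n^{-1/(4(d+1))}$; this is weaker than sharp Fournier--Guillin (which would give $n^{-1/(2d)}$ for $d\ge 2$), but it is what is needed and requires only the second-moment hypothesis.

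\emph{Step 3: averaging over boxes.} Summing over $j$ with weights $\epsilon_N$ and applying Cauchy--Schwarz,
$$\epsilon_N\sum_j\Bigl(\textstyle\int_E|z|^2d\nu^j\Bigr)^{1/2}\le\Bigl(\epsilon_N\sum_j 1\Bigr)^{1/2}\Bigl(\epsilon_N\sum_j\textstyle\int_E|z|^2d\nu^j\Bigr)^{1/2}=\Bigl(\textstyle\int_{\T\times E}|z|^2d\mu\Bigr)^{1/2},$$
using that $\sum_j\epsilon_N=1$ and that the $\epsilon_N$-weighted sum of box moments reconstructs the global moment of $\mu$, which is finite by assumption. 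Combined with Step 1, this produces the claimed bound $\epsilon_N\sum_j\E[\calW_1(\bar\mu^{N,j},\mu^j)]\le K(N\epsilon_N)^{-1/(4(d+1))}+K\epsilon_N$.

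\emph{Main obstacle.} The delicate point is ensuring the moment constant enters the bound in Step 2 only to the power $1/2$; this is precisely what makes Cauchy--Schwarz sufficient at the averaging step and lets the global hypothesis $\int|z|^2d\mu<\infty$ close the argument. A direct application of Fournier--Guillin with $q>2$ moments would yield a better rate but require stronger moment hypotheses on $\mu^j$ that are not uniform in $j$ and would not survive the sum with weight $\epsilon_N$; hence the preference for the optimized truncation approach with the suboptimal but adequate rate $(N\epsilon_N)^{-1/(4(d+1))}$.
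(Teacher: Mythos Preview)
Your proposal is correct and is essentially the same truncation-plus-covering argument as in the paper, just organized a little differently. You peel off the $r$-coordinate first via an explicit coupling (which makes the origin of the $\epsilon_N$ term transparent) and then run the covering on $E=\R^{2d}$ alone; the paper instead keeps $r$ inside the Kantorovich--Rubinstein/covering step and the $\epsilon_N$ appears there as the Lipschitz error $|r-j\epsilon_N|\le\epsilon_N$. Your identification of $D=2(d+1)$ is exactly what the paper's cube count produces once one optimizes the side length, and your observation that the suboptimal rate is the price for having the moment enter only as $(\int|z|^2d\nu^j)^{1/2}$, so that Cauchy--Schwarz over boxes closes the estimate with just a global second moment, is accurate; the paper achieves the same effect by summing over $j$ before applying Markov to the tail.
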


Notice that more refined versions of law of large numbers for empirical measures in the Wasserstein distance exist (see \cite{Fournier} for instance). But the proposition we prove here has an easy proof and is well suited for our problem.

\begin{proof}

Let $M>0$. For any $1\leq j \leq \epsilon_N^{-1}$, using Kantorovich-Rubinstein duality formula, we can bound:
\begin{align}\label{term appendix}
\E\left[\calW_1(\bar \mu^{N,j}, \mu^j)\right] &\leq \E\left[\sup_{\substack{Lip(\varphi)\leq 1 \\\varphi(0)=0}} \int_{B_j\times E}\varphi(r,z)\1_{\{| z | \leq M\}} d(\bar\mu^{N,j} - \mu^j) \right] \nonumber \\
   &\hspace{3cm}+ \E\left[\sup_{\substack{Lip(\varphi)\leq 1 \\\varphi(0)=0}} \int_{B_j\times E}\varphi(r,z)\1_{\{| z | > M\}} d(\bar\mu^{N,j} - \mu^j) \right].
\end{align}
The second term in \eqref{term appendix} can be bounded using Markov inequality type arguments. First, we bound it by
\begin{align*}
&\E\Bigg[\sup_{\substack{Lip(\varphi)\leq 1 \\\varphi(0)=0}} \int_{B_j\times E}\varphi(r,z)\1_{\{| z | > M\}} d(\bar\mu^{N,j} - \mu^j) \Bigg] \\
    &\hspace{4cm}\leq \E\left[\int_{B_j\times E} \left(r + |z|\right)\1_{\{| z | > M\}} d(\bar \mu^{N,j} + \mu^j)\right] \nonumber \\
    &\hspace{4cm}\leq 2 \epsilon_N^{-1} \int_{B_j\times E}  \1_{\{| z | > M\}} d\mu\left(r,z\right) +2 \epsilon_N^{-1}\int_{B_j\times E} |z| \1_{\{| z | > M\}}d\mu(r,z).
\end{align*}
Taking the mean over $j$ and then applying Markov inequality in the last expression gives:
\begin{align} \label{first term appendix}
&\epsilon_N \sum_{j=1}^{\epsilon_N^{-1}} \E\Bigg[\sup_{\substack{Lip(\varphi)\leq 1 \\\varphi(0)=0}} \int_{\T\times E}\varphi(r,z)\1_{\{| z | > M\}} d(\bar\mu^{N,j} - \mu^j) \Bigg] \nonumber\\
&\hspace{6cm}\leq 2 \int_{\T\times E}  \1_{\{| z | > M\}} d\mu\left(r,z\right) + 2 \int_{\T\times E} |z| \1_{\{| z | > M\}}d\mu(r,z)\nonumber \\
&\hspace{6cm}\leq \frac{K}{M},
\end{align}
for some constant $K$. To bound the first term in \eqref{term appendix}, let us subdivide $[-M,M]^{2d}$ in $n^{2d}$ disjoint cubes $(C_k)_{1\leq k \leq n^{2d}}$, for some integer $n\geq 1$. More precisely, each cube $C_k$ is of the form $[i_1M/n, (i_1+1)M/n]\times...\times[i_{2d}M/n, (i_{2d}+1)M/n]$ for $-n\leq i_1,...,i_{2d}\leq n-1$. Let $z_k\in C_k$ be a point in the cube $C_k$, let us say the center of $C_k$ to fix ideas. Then for any $\varphi$  such that $Lip(\varphi) \leq 1$ and $\varphi(0)=0$, one has
\begin{align}\label{second term appendix}
&\left | \int_{B_j\times E}\varphi(r,z)\1_{\{| z | \leq M\}} d(\bar\mu^{N,j} - \mu^j) \right | \nonumber \\
&\hspace{0.5cm}= \left | \sum_{k=1}^{n^{2d}} \int_{B_j\times C_k} \varphi(r,z) d(\bar\mu^{N,j} - \mu^j)\right | \nonumber\\
&\hspace{0.5cm}\leq \sum_{k=1}^{n^{2d}}\left( \left | \int_{B_j\times C_k}\varphi (j\epsilon_N,z_k) d(\bar\mu^{N,j} - \mu^j) \right | + \int_{B_j\times C_k} \left | \varphi(r,z) - \varphi(j\epsilon_N,z_k) \right | d(\bar\mu^{N,j}+\mu^j) \right) \nonumber\\
&\hspace{0.5cm}\leq \sum_{k=1}^{n^{2d}} \left( \int_{B_j\times C_k}\left( 1 + |z_k| \right)d\left|\bar\mu^{N,j} - \mu^j \right |  + \left(\epsilon_N + \frac{M}{n}\right)\left(\bar\mu^{N,j}(B_j\times C_k) + \mu^j(B_j\times C_k)\right) \right) \nonumber\\
&\hspace{0.5cm}\leq (1+M) \sum_{k=1}^{n^{2d}} \left | \bar\mu^{N,j}(B_j\times C_k) - \mu^j(B_j\times C_k) \right |  + 2\frac{M}{n} + 2 \epsilon_N,
\end{align}
where we used at the third line that $\varphi(j\epsilon_N,z_k) \leq r + |z_k|$ for the first term, and the Lipschitz property for the second term. Then we used that $|z_k| \leq M$ at the last line. For any $k\leq n^{2d}$ and any $1\leq j \leq \epsilon_N^{-1}$, by independence of the $(\bar Y^i)_{i\in NB_j}$ we get
\begin{align} \label{third term appendix}
\E \left[ \left ( \bar\mu^{N,j}(B_j\times C_k) - \mu^j(B_j\times C_k) \right )^2 \right] &= \frac{1}{(N\epsilon_N)^2}\sum_{i,i'\in NB_j} \E \left[  \1_{\bar Z^i\in C_k} \1_{\bar Z^{i'}\in C_k} \right] \nonumber \\
&\hspace{0.5cm}- \frac{2}{N\epsilon_N}\sum_{i\in NB_j} \Prb(\bar Z^i\in C_k)\mu^j(B_j\times C_k) + \mu^j(B_j\times C_k)^2 \nonumber \\
&= \frac{1}{N\epsilon_N} \mu^j(B_j\times C_k) \left(1 - \mu^j(B_j\times C_k)\right) \nonumber \\
&\leq \frac{1}{N\epsilon_N} \mu^j(B_j\times C_k).
\end{align}
Taking the mean over $j$ in \eqref{second term appendix} and using \eqref{third term appendix} gives, by Cauchy-Schwarz inequality
\begin{align*}
&\epsilon_N \sum_{j=1}^{\epsilon_N^{-1}} \E\left[\sup_{\substack{Lip(\varphi)\leq 1 \\\varphi(0)=0}} \int_{B_j\times E}\varphi(r,z)\1_{\{| z | \leq M\}} d(\bar\mu^{N,j} - \mu^j) \right] \\
&\hspace{3cm} \leq (1+M) \epsilon_N \sum_{j=1}^{\epsilon_N^{-1}} \sum_{k=1}^{n^{2d}} \E\left[\left | \bar\mu^{N,j}(B_j\times C_k) - \mu^j(B_j\times C_k) \right |\right]  + 2\frac{M}{n} + 2\epsilon_N \\
&\hspace{3cm} \leq \frac{ 2M \epsilon_N }{(N\epsilon_N)^{1/2}} \sum_{j=1}^{\epsilon_N^{-1}} \sum_{k=1}^{n^{2d}} \sqrt{\mu^j(B_j\times C_k)} + 2\frac{M}{n}+2\epsilon_N,
\end{align*}
for $M>1$. Applying Cauchy-Schwarz inequality to the two sums at the last line and using that for any $1\leq j \leq \epsilon_N^{-1}$, we have $\sum_{k=1}^{n^{2d}} \mu^j(B_j\times C_k) \leq 1$, we get
\begin{equation}\label{fourth term appendix}
\epsilon_N \sum_{j=1}^{\epsilon_N^{-1}} \E\left[\sup_{\substack{Lip(\varphi)\leq 1 \\\varphi(0)=0}} \int_{B_j\times E}\varphi(r,z)\1_{\{| z | \leq M\}} d(\bar\mu^{N,j} - \mu^j) \right] \leq \frac{ 2M n^d }{\left(N\epsilon_N\right)^{1/2}} + 2\frac{M}{n} + 2\epsilon_N.
\end{equation}
Combining \eqref{first term appendix} and \eqref{fourth term appendix} into \eqref{term appendix} finally gives
$$\epsilon_N \sum_{j=1}^{\epsilon_N^{-1}} \E\left[\calW_1(\bar \mu^{N,j}, \mu^j)\right]  \leq K\left(\frac{Mn^{d}}{(N\epsilon_N)^{1/2}} + \frac{M}{n} + \epsilon_N + \frac{1}{M}\right),$$
which choosing $M=(N\epsilon_N)^{\frac{1}{4(d+1)}}$ and $n=\lfloor M^2 \rfloor$ yields the desired result.

\end{proof}

\section*{Acknowledgements}
The author would like to thank Thierry Bodineau for suggesting this subject, and for the useful discussions and suggestions. The author would also like to thank Joaqu\'in Fontbona and Nicolas Fournier for their relevant remarks.


\bibliographystyle{plain}
\bibliography{biblio}

\end{document}